\newtheorem{theorem}{Theorem}
\newtheorem{definition}[theorem]{Definition}
\newtheorem{lemma}[theorem]{Lemma}
\newtheorem{corollary}[theorem]{Corollary}
\newtheorem{remark}[theorem]{Remark}
\newtheorem{assumption}[theorem]{Assumption}
\newcommand*{\N}{\ensuremath{\mathbb{N}}}
\newcommand*{\Z}{\ensuremath{\mathbb{Z}}}
\newcommand*{\R}{\ensuremath{\mathbb{R}}}
\newcommand*{\C}{\ensuremath{\mathbb{C}}}
\renewcommand{\i}{\mathrm{i}}
\renewcommand{\phi}{\varphi}
\renewcommand{\rho}{{\varrho}}
\renewcommand{\epsilon}{{\varepsilon}}
\renewcommand{\d}[1]{\,\mathrm{d}#1 \,}
\newcommand{\D}{\mathcal{D}}
\newcommand{\F}{\mathcal{F}} 
\newcommand{\T}{{\mathcal{T}}}
\newcommand{\B}{{\mathcal{B}}}
\newcommand{\A}{{\mathcal{A}}}
\newcommand{\E}{{\mathcal{E}}}
\renewcommand{\B}{{\mathcal{B}}}
\newcommand{\K}{{\mathcal{K}}}
\renewcommand{\S}{\mathcal{S}}
\newcommand{\UC}{\mathbb{S}^1}
\newcommand{\UD}{\mathbb{D}}
\newcommand{\p}{{per}}
\renewcommand{\L}{\mathcal{L}} 
\renewcommand{\Re}{\mathrm{Re}\,}
\renewcommand{\Im}{\mathrm{Im}\,}
\newcommand{\grad}{\nabla}
\newcommand{\loc}{{\mathrm{loc}}}
\newlength{\dhatheight}
\newcommand{\high}{}
\definecolor{xl}{rgb}{0.8,0.2,0.3}
\begin{document}

\sloppy

\title{Spectrum decomposition of translation operators in periodic waveguide}
\author{
Ruming Zhang\thanks{Institute for Applied and Numerical mathematics, Karlsruhe Institute of Technology, Karlsruhe, Germany
; \texttt{ruming.zhang@kit.edu}. }}
\date{}
\maketitle

\begin{abstract}
Scattering problems in periodic waveguides are interesting but also challenging topics in mathematics, both theoretically and numerically. As is well known, the unique solvability of these problems is not always guaranteed. To obtain a unique solution that is ``physically meaningful'', the {\em limiting absorption principle (LAP)} is a commonly used method. LAP assumes that the limit of a family of solutions with absorbing media converges, as the absorption parameter tends to $0$, and the limit is the ``physically meaningful solution''. It is also called the {\em LAP solution} in this paper.  It has been proved that the LAP holds for periodic waveguides in \cite{Hoang2011}. 
In this paper, we consider the spectrum decomposition of periodic translation operators. With the curve integral formulation and a generalized Residue theorem, the operator is explicitly described by its eigenvalues and generalized eigenfunctions, which are closely related to Bloch wave solutions. Then the LAP solution is decomposed into generalized eigenfunctions. This gives a better understanding of \high{the} structure of the scattered fields.\\

\noindent
{\bf Key words: scattering problems with periodic waveguide, limiting absorption principle, residue theorem, spectrum decomposition, generalized eigenfunctions}
\end{abstract}

\section{Introduction}

In this paper, we consider  scattering problems in periodic waveguides. This topic is of great interest both in mathematics and related technologies, e.g., nano-technology. In recent years, several mathematicians have been working on this topic and many interesting theoretical results and numerical methods have been established. Different from scattering problems with Dirichlet rough surfaces (see \cite{Kirsc1993,Kirsc1994,Chand1996,Chand2010}), the unique solvability is no longer guaranteed due to the existence of eigenvalues. To find the ``physically meaningfully solution'', the {\em Limiting Absorption Principle (LAP)} is a well-known technique. The LAP has been proved in \cite{Hoang2011} by the study of resolvent of periodic elliptic operators, and we also refer to \cite{Fliss2015}  and \cite{Kirsc2017a} with the help of different methods.  A number of numerical methods have also been developed based on the LAP (see \cite{Joly2006,Fliss2009a,Sun2009,Ehrhardt2009,Ehrhardt2009a}).

This paper considers the spectrum decomposition of the periodic operator that maps the values of LAP solutions  between edges of periodic cells. In \cite{Joly2006}, an open question arose that \high{was} if the translation operator between periodic boundaries has a Jordan normal form, when the medium is absorbing. This leads to an interesting \high{topic  to find} out an explicit structure for the solution. The open question has already been proved by T. Hohage and S. Soussi in \cite{Hohag2013}, for cases when either the media is absorbing, or the scattering problem is uniquely solvable. In this paper, we use a different method to establish the spectrum decomposition for cases when the limiting absorption principle holds. This result is definitely  much more general, as the unique solvability does not hold even for  homogeneous planar waveguides, while the LAP holds for a much larger family of problems. Based on the resolvent and the Floquet-Bloch transform, the LAP solution is written as the combination of \high{a} finite number of eigenfunctions and a curve integral. There are infinite number of poles in the interior of the curve, with one accumulation point at $0$ thus the Residue theorem could not be applied.  We first extend the residue theorem to this case, then the integral is written as the sum of residues at the poles. Then we consider the properties of the residues, where a very nice property of periodic structure is adopted. Finally, we have proved that the $H^{1/2}$ space defined on one periodic edge could be decomposed by generalized eigenfunctions. This result shows that the periodic operator has a Jordan normal form and also describes  structure of LAP solutions.

The rest of this paper is organized as follows. We give a brief \high{introduction} of the scattering problem in the second section, and introduce the spectrum of the quasi-periodic solution operator in Section 3. We formulate variational formulations for the quasi-periodic problems in the fourth section. In Section 5, we introduce the Floquet-Bloch transform and apply it to the case when medium is absorbing. Then we consider the case when the medium is not absorbing with the help of the limiting absorption principle in the sixth section, and set up the integral formulation for this problem. We study the space of LAP solutions in Section 7, and in Section 8, we study the spectrum decomposition of the periodic operator, also decompose the LAP solution by residues of generalized eigenfunctions.

\section{Scattering problems in periodic waveguides}

In this section, we introduce the mathematical model of scattering problems in a periodic waveguide (see Figure \ref{waveguide}). Let the waveguide \high{$\Omega:=(-\infty,\,+\infty)\times(0,1)$} be filled up with a periodic material that is periodic in $x_1$-direction, and the refractive index is denoted by a real-valued function $q$. 
Let the period be one, then
\begin{equation*}
 q(x_1+1,x_2)=q(x_1,x_2),\quad \forall x\in\Omega.
\end{equation*}
We also assume that $q\geq c_0>0$ where $c_0$ is a positive constant. Then we consider the solution of the following scattering problem:
\begin{eqnarray}
 \Delta {u}+k^2 q {u}&=&f\quad\text{ in }{\Omega};\label{eq:wg1}\\
 \frac{\partial {u}}{\partial \nu}&=&0\quad\text{ on }\partial \Omega;\label{eq:wg2}
\end{eqnarray}
where $\nu$ is the normal outward vector,  $f$ is a function in $L^2(\Omega)$ with compact support, the boundary of $\Omega$ is 
\[\partial\Omega:=\high{\big\{(x_1,0):\,x_2\in\R\big\}\cup\big\{(x_1,1):\,x_1\in\R\big\}.}\] 

\begin{remark}
Different boundary conditions on $\partial\Omega$ could also be considered, e.g., Dirichlet boundary conditions, Robin boundary conditions, etc.  The method is similar for these cases, thus we only use the Neumann condition in this paper as an example.
\end{remark}

\begin{figure}[ht]
\centering
\includegraphics[width=16cm,height=3cm]{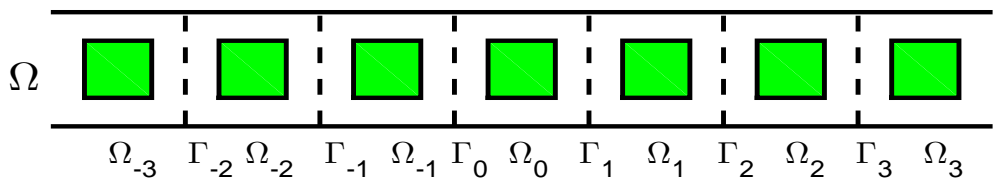}
\caption{Periodic waveguide.}
\label{waveguide}
\end{figure}

We are interested in the behavior of the scattered field $u$ between two periodic cells, thus we define the following periodic domains. Let
\begin{equation*}
 \Omega_j:=(j,j+1]\times(0,1),\quad \Gamma_j:=\{j\}\times(0,1),
\end{equation*}
then the left and right boundary of $\Omega_j$ are $\Gamma_j$ and $\Gamma_{j+1}$. Moreover, $\Omega=\cup_{j\in\Z}\Omega_j$. For simplicity, we also assume that ${\rm supp}(f)\subset \Omega_0$. Moreover, we define 
\[\partial \Omega_j:=\partial\Omega\cap\overline{\Omega_j}=\Big((j,j+1]\times\{0\}\Big)\cup\Big((j,j+1]\times\{1\}\Big),\]
which is the union of the lower and upper boundary of $\Omega_j$.

As is well known, the problem \eqref{eq:wg1}-\eqref{eq:wg2} is not always uniquely solvable in $H^1(\Omega)$, due to the distribution of spectrum of the elliptic operator. To identify ``physically meaningful'' solutions, we adopt the {\em Limiting Absorption Principle (LAP)}. When $k^2$ is replaced by $k_\epsilon^2:=k^2+\i\epsilon$ for any $\epsilon>0$, the problem \eqref{eq:wg1}-\eqref{eq:wg2} has a unique solution $u_\epsilon\in H^1(\Omega)$ from Lax-Milgram theorem. When $\lim_{\epsilon\rightarrow 0^+}u_\epsilon$ exists in $H^1_{loc}(\Omega)$, the LAP holds and the limit, denoted by $u(f)$, is called the LAP solution.

In this paper, we would like to consider the so-called translation operator for LAP solutions. The translation operator is defined as follows. Let $u(f)$ be the LAP solution to \eqref{eq:wg1}-\eqref{eq:wg2}, then the operator is defined
\begin{equation*}
 \mathcal{R}:\, u(f)\big|_{\Gamma_j}\,\mapsto \,u(f)\big|_{\Gamma_{j+1}},\quad j\geq 1.
\end{equation*}
As $u(f)\in H^1_{loc}(\Omega)$, $u(f)\big|_{\Gamma_j}\in H^{1/2}(\Gamma_j)$ for any $j\in\Z$. As $\Gamma_j$ is only a translation of $\Gamma_1$, let $\Gamma:=(0,1)$ and $X:=H^{1/2}(\Gamma)$, then $\mathcal{R}$ is an operator from $X$ to $X$. From \cite{Joly2006}, $\mathcal{R}$ has the following properties:
\begin{itemize}
 \item $\mathcal{R}$ is a compact operator;
 \item the spectrum radius $\rho(\mathcal{R})\leq 1$.
\end{itemize}

\begin{remark}
 We only discuss the translation operator defined on the right of $\Omega_0$, i.e., from $\Gamma_j$ to $\Gamma_{j+1}$ when $j\geq 1$. In fact, the operator on the left, i.e., from $\Gamma_j$ to $\Gamma_{j-1}$ when $j\leq 0$, is similar, thus is omitted.
\end{remark}

To make descriptions more clearly, at the end of this section, we introduce  the following operator:
\begin{equation*}
\A u:=-{q^{-1}}\high{ \Delta u}
\end{equation*}
defined in the domain 
\begin{equation*}
D(\A,\Omega):=\Big\{u\in H^1(\Omega):\,\Delta u\in L^2(\Omega)\Big\}.
\end{equation*}
Then equation \eqref{eq:wg1} is equivalent to 
\[(k^2 I-\A)u=\high{q^{-1}f},\quad \text{ where $I$ is the identity operator}.\] 
Let the spectrum of $\A$ be denoted by $\sigma(\A)$, then the problem \eqref{eq:wg1}-\eqref{eq:wg2} is  uniquely solvable in $H^1(\Omega )$ if and only if $k^2\notin\sigma(\A)$. Thus the spectrum of $\A$ plays an important role in the well-posedness of the problem \eqref{eq:wg1}-\eqref{eq:wg2}. From the Floquet-Bloch theory, the spectrum of $\A$ is closely related to the so-called Bloch wave solutions, which will be introduced in the next section.

\section{\high{Spectral} properties of $\A$}

In this section, we introduce Bloch wave solutions and spectrum properties of the operator $\A$. The mathematical basis mainly comes from the Floquet-Bloch theory in \cite{Kuchm1993}, and for more details of scattering problems we refer to  \cite{Joly2006,Ehrhardt2009a,Fliss2015}.

For a complex number  $z\in\C$, define the $z$-quasi-periodic boundary condition as:
\begin{equation}\label{eq:z_quasi}
\left.u\right|_{\Gamma_{j+1}}=z\, \left.u\right|_{\Gamma_j},\quad  \left.\frac{\partial u}{\partial x_1}\right|_{\Gamma_{j+1}}=z\left.\frac{\partial u}{\partial x_1}\right|_{\Gamma_{j}},\quad \forall j\in\Z.
\end{equation}
We define the subspace of $H^1(\Omega_0)$ by:
\begin{equation*}
H_z^1(\Omega_0):=\Big\{\phi\in H^1(\Omega_0):\,\phi\text{ satisfies \eqref{eq:z_quasi} for $j=0$}\Big\}.
\end{equation*}
\high{In particular}, when $z=1$, the functions that satisfy \eqref{eq:z_quasi} are periodic, and the subspace of periodic functions in $H^1(\Omega_0)$ is denoted by $H_\p^1(\Omega_0)$.
 Define the operator in the quasi-periodic domain, i.e.,
\begin{equation}
\A_z u=-q^{-1}\high{\Delta u}\,\text{ with its domain }D_z(\A,\Omega_0):=D(\A,\Omega_0)\cap H_z^1(\Omega_0),
\end{equation}
where $D(\A,\Omega_0)$ is defined in the same way as $D(\A,\Omega)$, with $\Omega$ replaced by $\Omega_0$. Let $\sigma(\A_z)$ be the spectrum of $\A_z$. A classical result from the Floquet-Bloch theory also shows that (see \cite{Kuchm1993}):
\begin{equation}\label{eq:A_sig_A}
\sigma(\A)=\cup_{|z|=1}\sigma(\A_z).
\end{equation}

 When  the equation $(k^2 I-\A_z)u=0$ has a nontrivial solution in $D_z(\A,\Omega_0)$,   $z$ is called a {\em Floquet multiplier}, and the corresponding non-trivial solution is called a {\em Bloch wave solution}. Let $\mathbb{F}\high{(k^2)}$ be the collection of all Floquet multipliers and  ${\mathbb U\mathbb F}\high{(k^2)}=\mathbb{F}\high{(k^2)}\cap\UC$ ($\UC$ is the unit circle in $\C$) be the set of all {\em unit Floquet multipliers}.  
 
\high{
\begin{remark}
In this paper, when $k^2$ is fixed, we write $\mathbb{F} \,(\mathbb{UF})$ instead of $\mathbb{F}(k^2) \,(\mathbb{UF}(k^2))$ for simplicity.
\end{remark}

From \cite{Joly2006}, we have the following property for $\mathbb{F}$:\\
{\bf Property a). } 
 $z\in\mathbb{F}$ if and only if $z^{-1}\in\mathbb{F}$. Especially, $z\in\mathbb{UF}$ if and only if $\overline{z}=z^{-1}\in\mathbb{UF}$.

From the proofs of Lemma 2.3 and Theorem 2.4 (a) in \cite{Kirsc2017a}, we also have the following property:
 \\
{\bf Property b). } 
 $\mathbb{F}$ is a discrete set with only accumulation points at $0$ and $\infty$.
 
 This property is also shown later in Theorem \ref{th:z-eq}.
 }

From the \high{Floquet-Bloch theory} \eqref{eq:A_sig_A}, to study the \high{spectral} property of $\A$, it is \high{useful} to consider the spectrum of the operator $\A_z$ for the cell problem when $|z|=1$. For simplicity, we replace $z$ by $\alpha=-\i\log(z)$ where $\alpha\in(-\pi,\pi]$ and also replace $\A_z$ by $\A_\alpha$ in this section. Then \eqref{eq:z_quasi} becomes
\begin{equation}
\label{eq:alpha_quasi}
\left.u\right|_{\Gamma_{j+1}}=\exp(\i\alpha) \left.u\right|_{\Gamma_j},\quad  \left.\frac{\partial u}{\partial x_1}\right|_{\Gamma_{j+1}}=\exp(\i\alpha) \left.\frac{\partial u}{\partial x_1}\right|_{\Gamma_{j}},\quad \forall j\in\Z.
\end{equation}
Denote the spectrum of $\A_\alpha$ by $\sigma(\A_\alpha)$. \high{As $\A_\alpha$ is self-adjoint with respect to the $L^2-$space equiped with the weighted inner product $(\phi,\psi)_{L^2,q}=\int_{\Omega_0}q\phi\overline{\psi}\d x$}, $\sigma(\A_\alpha)$ is a discrete subset of $(0,\infty)$ (see \cite{Kuchm1993,Fliss2015}). 
 By rearranging the order of the points in $\sigma(\A_\alpha)$ properly, we obtain a family of analytic functions $\{\mu_n(\alpha):\,n\in\N\}$ such that
 \begin{equation*}
 \sigma(\A_\alpha)=\cup_{n\in\N}\{\mu_n(\alpha)\}.
 \end{equation*}
 Thus $\sigma(\A)=\cup_{n\in\N}\cup_{\alpha\in(-\pi,\pi]}\{\mu_n(\alpha)\}$.  
Moreover, $\lim_{n\rightarrow\infty}\mu_n(\alpha)=\infty$ for any fixed $\alpha\in(-\pi,\pi]$.
For any $\mu_n(\alpha)$, there is also a corresponding eigenfunction $\psi_n(\alpha,\cdot)\in D_\alpha(\A,\Omega_0)$ such that
\begin{equation*}
\A_\alpha\psi_n(\alpha,\cdot)=\mu_n(\alpha)\psi_n(\alpha,\cdot).
\end{equation*}
 Moreover, $\psi_n(\alpha,\cdot)$ also depends analytically on $\alpha\in(-\pi,\pi]$. Both $\mu_n(\alpha)$ and $\psi_n(\alpha,\cdot)$ can be extended into analytic functions with respect to $\alpha$ in a small enough neighborhood of $(-\pi,\pi]\times\{0\}$ in the complex plane.

\high{
\begin{remark}
From Section 3.3 in \cite{Fliss2015}, there is an important argument for the function $\mu_n$. As $\mu_n$ is analytic, either $\mu_n$ is constant or $\mu'_n$ vanishes at \high{a} finite number of points. Note that the function $\mu_n$ is a constant function if and only if $\A$ has a non-empty point spectrum (denoted by $\sigma_p(\A)$). However, as was proved in Theorem 2.4 (b) in \cite{Kirsc2017a}, $\sigma_p(\A)=\emptyset$. Thus $\mu_n$ is never a constant function.
\end{remark}

From the remark above, we also have a further property for $\mathbb{UF}$.\\
{\bf Property c). }The set $\mathbb{UF}$ is either finite or empty.

}

For any fixed $n\in\N$, the graph $\{(\alpha,\mu_n(\alpha)):\,\alpha\in(-\pi,\pi]\}$ is called a dispersion curve, and all the dispersion curves compose the dispersion diagram. We show the dispersion diagram \high{in} the following example:\\
\noindent
{\bf Example}. $q=1$ is a constant function in $\Omega$, and its dispersion diagram is shown in Figure \ref{fig:dd} (left). The relationship between the quasi-periodic parameter $\alpha$ and the eigenvalue \high{is simply}:
\begin{equation*}
\mu_{jm}(\alpha)=j^2\pi^2+(\alpha+2\pi m)^2,\quad j\in\N,\,m\in\Z.
\end{equation*}

\begin{figure}[ht]
\centering
\includegraphics[width=0.45\textwidth]{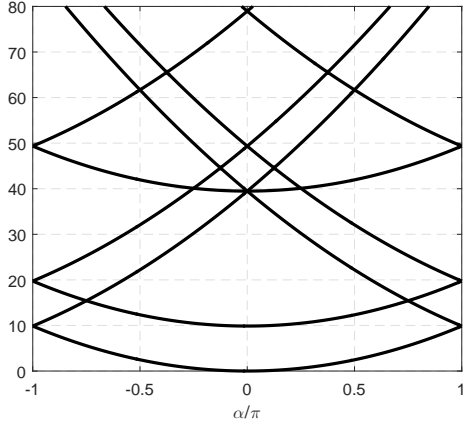} 
\caption{Dispersion diagram.}
\label{fig:dd}
\end{figure}


 For any fixed $k^2\in\sigma(A)$,  there is at least one $\alpha\in(-\pi,\pi]$ such that $k^2\in\sigma(A_\alpha)$. Thus the set
 \begin{equation*}
  P :=\left\{\alpha\in(-\pi,\pi]:\,\exists\, n\in\N,\,{\rm s.t.,}\, \mu_n(\alpha)=k^2\right\}
 \end{equation*}
is not empty. From the definition of $\mathbb{UF}$, it has the representation as:
\begin{equation*}
 \mathbb{UF}=\left\{\exp(\i\alpha):\,\alpha\in P\right\}.
\end{equation*}
The points in $P $ \high{are} divided into the following three classes:
\begin{itemize}
 \item  When $\mu'_n(\alpha)>0$, then $\psi_n(\alpha,\cdot)$ is propagating from the left to the right;
 \item when $\mu'_n(\alpha)<0$, then $\psi_n(\alpha,\cdot)$ is propagating from the right to the left;
 \item when $\mu'_n(\alpha)=0$.
\end{itemize}
Based on \high{this classification}, we define the following three sets:
\begin{eqnarray*}
 P_\pm &:=&\left\{\alpha\in(-\pi,\pi]:\,\exists\, n\in\N\text{ s.t., }\mu_n(\alpha)=k^2\text{ and }\pm\mu'_n(\alpha)>0\right\};\\
 P_0 &:=&\left\{\alpha\in(-\pi,\pi]:\,\exists\, n\in\N\text{ s.t., }\mu_n(\alpha)=k^2\text{ and }\mu'_n(\alpha)=0\right\}.
\end{eqnarray*}
Then $P =P_+ \cup P_- \cup P_0 $.

\begin{remark}
It is possible that there are more than one different integers $m_1,\dots,m_M\in\N$ such that $\mu_{m_1}(\alpha)=\cdots=\mu_{m_M}(\alpha)=k^2$, i.e., $M$ dispersion curves may have an intersection at $(\alpha,k^2)$. \high{In this case, $\alpha$ is repeated $M$ times, i.e., $\left\{\alpha_{m_1},\dots,\alpha_{m_M}\right\}$, and each $\alpha_{m_j}$ is associated with a unique dispersion curve $\mu_{m_j}$ ($j=1,2,\dots,M$).}
\end{remark}

%
%

As the limiting absorption principle fails when the set $P_0 $ is not empty, we make the following assumption.

\begin{assumption}\label{asp1}
 Assume \high{that  $k$ is such that $P_0 =\emptyset$.}
\end{assumption}
The assumption is reasonable as the set $\big\{k>0:\,P_0 \neq\emptyset\big\}$ is ``small enough''. Actually, the set is countable with at most one accumulation point at $\infty$ (see Theorem 5, \cite{Fliss2015}). As was proved in \cite{Hoang2011,Fliss2015,Kirsc2017a}, the limiting absorption principle holds when Assumption \ref{asp1} holds.

\section{Variational form of quasi-periodic solutions}

From the last section, the Floquet multipliers play an important role in the spectrum of the operator $\A$. Thus we introduce the quasi-periodic solutions in this section. 
\high{Consider variational formulation for the following quasi-periodic problem. For any $z\in\C\setminus\{0\}$ with right hand side $f_z\in L^2(\Omega_0)$ that depends analytically on $z$, find a weak solution $u_z\in H^1_z(\Omega_0)$ of the following problem:}
\begin{eqnarray}\label{eq:quasi1}
 \Delta u_z+k^2 qu_z&=&f_z\quad\text{ in }\Omega_0;\\\label{eq:quasi2}
 \frac{\partial u_z}{\partial \nu}&=&0\quad\text{ on }\partial\,{\Omega_0}.
\end{eqnarray}
Note that the analytic dependence on $z$ is defined as follows.
\begin{definition}\label{def}
Suppose $\phi(z,\cdot)\in S(X)$ for any fixed $z$, where $S(X)$ is a Sobolev space defined in the domain $X$. Then $\phi$ depends analytically on $z$ in an open domain $U\subset\C$ if for any fixed $z_0\in U$, the expansion
 \begin{equation*}
  \phi(z,x)=\sum_{\ell=0}^\infty (z-z_0)^\ell\phi_\ell(x),\quad\phi_\ell\in S(X)
 \end{equation*}
holds uniformly for $z\in B(z_0,\delta)\subset U$ for a small enough $\delta>0$.
\end{definition}

We seek for the \high{weak} solution  $u_z$ of \eqref{eq:quasi1}-\eqref{eq:quasi2} in the domain $ H^1_z(\Omega_0)$. First, we transform the problem into one defined in the fixed function space $H_\p^1(\Omega_0)$. Define the operator 
\begin{equation*}
 \left(\zeta_z u_z\right)(x_1,{x_2}):=z^{x_1}u_z(x_1,{x_2}),
\end{equation*}
and let $v_z=\zeta_z^{-1} u_z=z^{-x_1}u_z(x_1,{x_2})$. Then $v_z\in H_\p^1(\Omega_0)$ is a periodic \high{weak solution of the following equations:}
\begin{eqnarray*}
 \Delta v_z+2\log(z)\frac{\partial v_z}{\partial x_1}+(k^2q+\log^2(z))v_z&=&\zeta_z^{-1} f_z\quad\text{ in }\Omega_0;\\
 \frac{\partial v_z}{\partial \nu}&=&0\quad\text{ on }\partial\,{\Omega_0}.
\end{eqnarray*}
As $\log(z)$ is a multi-valued function, we require that $z$ lies in the branch cutting off along the negative real axis (denoted by $\R_{\leq 0}:=(-\infty,\,0]\times\{0\}\subset\C$) such that $\log(z)$ is a single valued analytic function in this branch. More explicitly, let the branch be defined as $\C_\times:=\{z\in\C\setminus\{0\}:\,-\pi<\arg(z)\leq \pi\}$, where $\arg(z)$ is the argument of the complex number $z$. 
From the Green's formula, we obtain the variational formulation of the periodic problem, i.e., given an $f_z\in L^2(\Omega_0)$, to find $v_z\in H_\p^1(\Omega_0)$ that satisfies
\begin{equation}\label{eq:quasi_periodic_var}
\int_{\Omega_0}\left[\grad v_z\cdot\grad\overline{\phi}+\log(z)\left(v_z\frac{\partial\overline{\phi}}{\partial x_1}-\frac{\partial v_z}{\partial x_1}\overline{\phi}\right)-(k^2 q+\log^2(z))v_z\overline{\phi}\right]\d x\\
=-\int_{\Omega_0}\left(\zeta_z^{-1} f_z\right)\overline{\phi}\d x
\end{equation}
for any $\phi\in H_\p^1(\Omega_0)$. 
The left hand side is a \high{Hermitian} sesquilinear form defined in $H_\p^1(\Omega_0)\times H_\p^1(\Omega_0)$.
 From Riesz's lemma, there is a bounded linear operator $\K_z\in\mathcal{L}(H_\p^1(\Omega_0))$ such that
\high{\begin{equation*}
\left<\K_z w,\phi\right>= -\int_{\Omega_0}\left[\log(z)\left(w\frac{\partial\overline{\phi}}{\partial x_1}-\frac{\partial w}{\partial x_1}\overline{\phi}\right)-(k^2 q+1+\log^2(z))w\overline{\phi}\right]\d x,
\end{equation*}}
where $\left<\cdot,\,\cdot\right>$ is the inner product defined in $H_\p^1(\Omega_0)$. Moreover, \high{from Riesz representation theorem,} there is an $\widetilde{f}_{z}\in H_z^1(\Omega_0)$ such that
\begin{equation}\label{eq:tilde_f}
-\int_{\Omega_0}\zeta_z^{-1}f_z\overline{\phi_z}\d x=\left<\zeta_z^{-1}\widetilde{f}_{z},\phi_z\right>,\quad\text{ for all }\phi\in H_z^1(\Omega_0).
\end{equation}
Thus the variational form  \eqref{eq:quasi_periodic_var} is equivalent to 
\begin{equation*}
(I-\K_z)v_z=\zeta_z^{-1}\widetilde{f}_z.
\end{equation*}
Thus when $I-\K_z$ is invertible, 
\begin{equation*}
u_z=(I-\B_z)^{-1}\widetilde{f}_z,\quad\text{ where }\B_z=\zeta_z \K_z \zeta_z^{-1}.
\end{equation*}

Now we focus on the invertibility of the operator of $I-\K_z$. \high{From the compact imbedding of $H^1_z(\Omega_0)$ in $L^2(\Omega_0)$, $\K_z$ is a compact operator (the same argument was made in page 3958, \cite{Kirsc2017a} for the compactness of the operator $K_\alpha$ in that paper). As $\K_z$} depends analytically on $z\in\C_\times$, thus the left hand side in \eqref{eq:quasi_periodic_var} defines an analytic family of Fredholm operators $I-\K_z$. First, we recall the following {\em Analytic Fredholm Theory}.

\begin{theorem}[Theorem VI.14, \cite{Reed1980}]
 Let $D$ be an open and connected subset in $\C$, $X$ be a Hilbert space, and $\T:\,D\rightarrow \mathcal{L}(X)$ be an operator valued analytic function such that $\T(z)$ is compact for each $z\in D$. Then either
 \begin{itemize}
  \item $(I-\T(z))^{-1}$ does not exist for any $z\in D$, or
  \item $(I-\T(z))^{-1}$ exists for all $z\in D\setminus S$, where $S$ is a discrete subset of $D$. In this case, $(I-\T(z))^{-1}$ is meromorphic in $D$ and analytic in $D\setminus S$. The residues at the poles are finite rank operators, and if $z\in S$, then $\T(z)\phi=\phi$ has a nonzero solution in $X$.
 \end{itemize}
 \label{th:ana_fred_thy}
\end{theorem}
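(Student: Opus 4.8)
The plan is to prove this by localization, then the classical reduction of an analytic family of compact perturbations to a finite-dimensional determinant, and finally a connectedness argument to globalize. Fix $z_0\in D$. Since $\T(z_0)$ is compact, choose a finite-rank operator $F$ with $\|\T(z_0)-F\|<1/2$; by continuity of $z\mapsto\T(z)$ there is a disc $D_r(z_0)\subset D$ on which $\|\T(z)-F\|<1$, so $I-(\T(z)-F)$ is boundedly invertible there with analytic inverse, via the Neumann series. Factor $I-\T(z)=(I-\T(z)+F)(I-G(z))$ with $G(z):=(I-\T(z)+F)^{-1}F$, which is analytic in $z$ and of rank at most that of $F$ for every $z$. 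Hence $(I-\T(z))^{-1}$ exists if and only if $(I-G(z))^{-1}$ does, and the problem is reduced to a finite-rank analytic family on $D_r(z_0)$.

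Write $Fx=\sum_{j=1}^N\langle x,\chi_j\rangle\psi_j$ and set $\psi_j(z):=(I-\T(z)+F)^{-1}\psi_j$, which are analytic $X$-valued functions. Pairing the equation $(I-G(z))x=y$ with each $\chi_i$ shows it is equivalent to the $N\times N$ linear system $(I-M(z))c=b$, where $M(z)_{ij}:=\langle\psi_j(z),\chi_i\rangle$ is analytic, $c_j=\langle x,\chi_j\rangle$ and $b_i=\langle y,\chi_i\rangle$. Thus $(I-G(z))^{-1}$ exists precisely when $d(z):=\det(I-M(z))\neq0$. By the identity theorem for analytic functions, on the connected disc $D_r(z_0)$ either $d\equiv0$, in which case $(I-\T(z))^{-1}$ exists for no $z$ near $z_0$, or the zeros of $d$ form a discrete subset of $D_r(z_0)$; in the latter case $(I-M(z))^{-1}=d(z)^{-1}\,\mathrm{adj}(I-M(z))$ is meromorphic with finite-rank matrix residues, hence $(I-G(z))^{-1}$, and therefore $(I-\T(z))^{-1}=(I-G(z))^{-1}(I-\T(z)+F)^{-1}$, is meromorphic on $D_r(z_0)$ with finite-rank residues.

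To globalize, let $D_{\mathrm{bad}}$ be the set of $z\in D$ having a neighbourhood on which $(I-\T(z))^{-1}$ exists nowhere, and let $D_{\mathrm{good}}:=D\setminus D_{\mathrm{bad}}$. By the local dichotomy $D_{\mathrm{bad}}$ is open; $D_{\mathrm{good}}$ is open as well, since a point of a disc $D_r(z_0)$ on which $d\not\equiv0$ has invertible points arbitrarily near it and hence cannot be bad. As $D$ is connected, either $D=D_{\mathrm{bad}}$, which is the first alternative, or $D=D_{\mathrm{good}}$. In the second case put $S:=\{z\in D:(I-\T(z))^{-1}\text{ does not exist}\}$; locally $S$ coincides with the zero set of the non-vanishing analytic function $d$, so $S$ is discrete and closed in $D$, while $(I-\T(z))^{-1}$ is analytic on $D\setminus S$ and meromorphic on $D$ with finite-rank residues at the poles. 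Finally, if $z_*\in S$ then $d(z_*)=0$, so $I-G(z_*)$ is not injective; since $G(z_*)$ is finite-rank, hence compact, there is $x\neq0$ with $G(z_*)x=x$, and unwinding the factorization $Fx=(I-\T(z_*)+F)x$ yields $\T(z_*)x=x$, i.e. $\T(z_*)\phi=\phi$ has a nonzero solution.

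The only genuine subtleties I anticipate are bookkeeping the $z$-dependence of the range of the finite-rank pieces, handled above by fixing the representation of $F$ once and for all and carrying the analytic vectors $\psi_j(z)$, and making the connectedness step airtight by verifying that $D_{\mathrm{good}}$ and $D_{\mathrm{bad}}$ are indeed complementary open sets covering $D$; both become routine once the scalar determinant $d(z)$ is in hand.
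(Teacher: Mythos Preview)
The paper does not prove this theorem at all; it is quoted verbatim from Reed--Simon as a black-box tool (the classical Analytic Fredholm Theorem) and then applied to the family $I-\K_z$. Your argument---finite-rank approximation of $\T(z_0)$, the factorization $I-\T(z)=(I-\T(z)+F)(I-G(z))$, reduction to the scalar determinant $d(z)=\det(I-M(z))$, and the open--open connectedness dichotomy---is correct and is essentially the standard proof found in Reed--Simon, so there is nothing in the paper to compare it against.
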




With the help of the analytic Fredholm theory, 
the existence and regularity of the inverse of $I-\K_z$ with respect to $z\in \C_\times$ is concluded in the following theorem.

\begin{theorem}\label{th:z-eq}
 For any fixed $k\in\C$ with $\Re (k)>0$ and $\Im(k) \geq 0$,  $I-\K_z$ is invertible for  $z\in\C_\times\setminus\mathbb{F}$, where $\mathbb{F}$ is the set of all Floquet multipliers. The inverse operator depends analytically on $z$ in $\C_\times\setminus\mathbb{F}$ and meromorphically on $z$ in $C_\times$.
\end{theorem}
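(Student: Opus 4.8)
The plan is to apply the Analytic Fredholm Theory (Theorem \ref{th:ana_fred_thy}) to the operator family $z\mapsto\K_z$ on the connected open set $\C_\times$, and then identify the exceptional set $S$ with the Floquet multipliers $\mathbb{F}$. We have already noted that $\K_z$ is a compact operator on $H^1_\p(\Omega_0)$ for each fixed $z$ (via the compact embedding $H^1_z(\Omega_0)\hookrightarrow L^2(\Omega_0)$), and that $z\mapsto\K_z$ is analytic on $\C_\times$ in the sense of Definition \ref{def}; these two facts are exactly the hypotheses of Theorem \ref{th:ana_fred_thy}. So the theorem gives a dichotomy: either $(I-\K_z)^{-1}$ exists nowhere on $\C_\times$, or it exists off a discrete set $S\subset\C_\times$, is analytic on $\C_\times\setminus S$, and is meromorphic on $\C_\times$ with finite-rank residues.

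The first step is to rule out the ``nowhere invertible'' alternative. For this I would exhibit a single point $z_0\in\C_\times$ at which $I-\K_{z_0}$ is invertible. The natural candidate is a point on the unit circle, $z_0=e^{\i\alpha_0}$ with $\alpha_0\in(-\pi,\pi]$, chosen so that $k^2\notin\sigma(\A_{\alpha_0})$ — possible because $\sigma(\A_{\alpha_0})$ is discrete and, when $\Im(k)>0$ or more generally when $k^2\notin\sigma(\A_{\alpha_0})$, the quasi-periodic problem \eqref{eq:quasi1}--\eqref{eq:quasi2} is uniquely solvable. Unwinding the equivalences built up in Section 4 (the conjugation by $\zeta_z$ and the Riesz identification \eqref{eq:tilde_f}), unique solvability of the quasi-periodic problem at $z_0$ is precisely the statement that $I-\K_{z_0}$ is invertible on $H^1_\p(\Omega_0)$. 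Hence the second alternative of the dichotomy holds, with the inverse meromorphic on all of $\C_\times$.

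The second step is to show that the exceptional set $S$ coincides with $\mathbb{F}\cap\C_\times=\mathbb{F}$ (note $0\notin\C_\times$ and, by Property b), $\mathbb{F}$ has accumulation points only at $0$ and $\infty$, so $\mathbb{F}\subset\C_\times$). By Theorem \ref{th:ana_fred_thy}, $z\in S$ if and only if $\K_z\phi=\phi$ has a nonzero solution $\phi\in H^1_\p(\Omega_0)$. Tracing back through the construction of $\K_z$: the homogeneous equation $(I-\K_z)\phi=0$ is equivalent (with $f_z=0$) to the homogeneous variational form \eqref{eq:quasi_periodic_var}, which via $u=\zeta_z\phi$ is equivalent to $(k^2 I-\A_z)u=0$ having a nontrivial solution $u\in D_z(\A,\Omega_0)$; and that is, by definition, the statement that $z$ is a Floquet multiplier. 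So $S=\mathbb{F}$, and the invertibility, analyticity, and meromorphy claims follow directly from the second bullet of Theorem \ref{th:ana_fred_thy}. As a by-product, discreteness of $S$ re-derives the discreteness part of Property b) (this is the remark ``also shown later in Theorem \ref{th:z-eq}'').

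The main obstacle, and the only place real care is needed, is the bookkeeping in the second step: the operator $\K_z$ was defined through a chain of transformations (the substitution $v_z=z^{-x_1}u_z$, Green's formula, two applications of the Riesz representation theorem with different inner products on $H^1_\p$ versus $H^1_z$), so one must check that each of these is a genuine equivalence — in particular that $\zeta_z:H^1_z(\Omega_0)\to H^1_\p(\Omega_0)$ is a topological isomorphism for every $z\in\C_\times$ (which it is, since $z^{\pm x_1}$ is smooth and bounded with bounded derivatives on $[0,1]$ for such $z$, with $\log z$ single-valued on $\C_\times$) so that nontrivial solutions correspond to nontrivial solutions. Granting that, the identification $S=\mathbb{F}$ is immediate and the theorem follows.
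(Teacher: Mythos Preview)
Your proposal is correct and follows essentially the same approach as the paper's proof: apply the Analytic Fredholm Theorem to the analytic compact family $\K_z$, rule out the ``nowhere invertible'' alternative by exhibiting a point on the unit circle away from $\mathbb{UF}$ (the paper phrases this as ``$\mathbb{UF}$ is finite or empty, so $(I-\K_z)^{-1}$ exists on $\UC\setminus\mathbb{UF}$''), and then identify the exceptional set $S$ with $\mathbb{F}$ via the conjugation $\zeta_z$ in both directions. Your more explicit bookkeeping on why $\zeta_z$ is a topological isomorphism is a welcome clarification, but the logical skeleton is identical.
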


\begin{proof}
 From the fact that $\mathbb{UF}$ is a finite or empty set, $(I-\K_z)^{-1}$ exists for all $z\in\UC\setminus\mathbb{UF}$. From the analytic Fredholm theory, there is a discrete set $\S\subset \C_\times$ \high{depending} on $k$ such that $I-\K_z$ is invertible \high{outside $\S$}.  The analytic dependence of $z$ comes from the perturbation theory \high{(see \cite{Kato1995})}.

We only need to prove that $\S=\mathbb{F}$. Suppose there is a $z\in \S\setminus\mathbb{F}$, then with $f=0$, there is a nontrivial solution $\phi\in H_\p^1(\Omega_0)$ such that $(I-\K_z)\phi=0$. Then $\zeta_z\phi\in H_z^1(\Omega_0)$ is a nontrivial solution to \eqref{eq:quasi1}-\eqref{eq:quasi2} with $f_z=0$, thus $z\in\mathbb{F}$. It contradicts with the assumption that  $z\in\S\setminus\mathbb{F}$. Suppose there is a $z\in \mathbb{F}\setminus\S$, then there is  a $\psi\in H_z^1(\Omega_0)$ that satisfies \eqref{eq:quasi1}-\eqref{eq:quasi2} with $f_z=0$. Then $\zeta_z^{-1}\psi\in H^1_\p(\Omega_0)$ is a solution to \eqref{eq:quasi_periodic_var} with vanishing right hand side. This implies $\zeta_z^{-1}\psi$ satisfies $(I-\K_z)(\zeta_z^{-1}\psi)=0$, so $z\in \S$, which contradicts with the assumption that $z\in\mathbb{F}\setminus \S$.  Thus $\S=\mathbb{F}$. 
\end{proof}

\high{
Note that $\mathbb{F}(k^2)=\S(k^2)$ is the collection of all poles of the operator $\left(I-\K_z(k^2)\right)^{-1}$, where $\K_z(k^2)$ depends analytically on $z$ and jointly continuous on $(z,k^2)$. Thus we also get the following property of $\mathbb{F}(k^2)$ from Theorem 3 in \cite{Stein1968}:\\
{\bf Property d). }The set $\mathbb{F}(k^2)$ depends continuously on $k^2$.

Moreover, as the set $\S(k^2)$ is discrete, $\mathbb{F}(k^2)$ is discrete as well. This coincides with Property b).
}

%
%
%

 From the definition $\B_z:=\zeta_z\K_z\zeta_z^{-1}$ and $I-\B_z=\zeta_z(I-\K_z)\zeta_z^{-1}$, the singularities of $(I-\B_z)^{-1}=\zeta_z(I-\K_z)^{-1}\zeta_z^{-1}$ only come from $(I-\K_z)^{-1}$ for $z\neq 0$. On the other hand, as the problem \eqref{eq:wg_z1}-\eqref{eq:wg_z2} has a unique solution when $z\high{\in}\C\setminus\mathbb{F}$, $u_z$ is a single-valued function. This implies that the value of $u_z$ does not depend on the branch where $z$ lies in. Thus it \high{can} be extended to an analytic function in $\C\setminus(\mathbb{F}\cup\{0\})$. 
Then the dependence of the solution $u_z$ of the problem \eqref{eq:quasi1}-\eqref{eq:quasi2} on $z$ \high{is summarized} in the following theorem.

\begin{theorem}\label{th:solution_z_uc}
For any $k\in\C$ such that $\Re(k)>0$ and $\Im(k)\geq 0$ and $f_z\in L^2(\Omega_0)$ depends analytically on $z\in\C$, \eqref{eq:quasi1}-\eqref{eq:quasi2} is uniquely solvable in $H^1_z(\Omega_0)$.  Moreover, $u_z$  depends analytically on $z$ in $\C\setminus\left(\mathbb{F}\cup\{0\}\right)$ and meromorphically on $z$ in $\C\setminus\{0\}$.
\end{theorem}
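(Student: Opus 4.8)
The plan is to assemble Theorem~\ref{th:solution_z_uc} from the machinery already in place, namely Theorem~\ref{th:z-eq} together with the conjugation identities $u_z = \zeta_z v_z$, $v_z = (I-\K_z)^{-1}\zeta_z^{-1}\widetilde f_z$, and $I-\B_z = \zeta_z(I-\K_z)\zeta_z^{-1}$. First I would fix $k$ with $\Re(k)>0$, $\Im(k)\ge 0$ and work on the slit plane $\C_\times$: by Theorem~\ref{th:z-eq}, $I-\K_z$ is invertible for $z\in\C_\times\setminus\mathbb{F}$, with inverse analytic there and meromorphic on $\C_\times$. Since $f_z$ depends analytically on $z$ in the sense of Definition~\ref{def}, and $\zeta_z^{-1}f_z(x_1,x_2)=z^{-x_1}f_z(x_1,x_2)$ is the product of an analytic $L^2(\Omega_0)$-valued function with the scalar analytic function $z\mapsto z^{-x_1}$ (analytic on $\C_\times$ since we stay in the branch), the right-hand side $\zeta_z^{-1}\widetilde f_z$ obtained via the Riesz identification~\eqref{eq:tilde_f} depends analytically on $z$ as well; composing with the analytic operator-valued map $(I-\K_z)^{-1}$ gives that $v_z$, hence $u_z=\zeta_z v_z$, depends analytically on $z\in\C_\times\setminus\mathbb{F}$ and meromorphically on $z\in\C_\times$. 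Unique solvability of \eqref{eq:quasi1}--\eqref{eq:quasi2} in $H^1_z(\Omega_0)$ for $z\notin\mathbb{F}$ is immediate from the equivalence of that problem with $(I-\K_z)v_z=\zeta_z^{-1}\widetilde f_z$ established in Section~4, and for $z\in\mathbb{F}$ by definition a Bloch wave solution exists so uniqueness fails (consistent with the statement restricting analyticity to $\C\setminus(\mathbb{F}\cup\{0\})$).

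The second and genuinely new part is removing the artificial slit: I must show $u_z$ extends across the negative real axis $\R_{\le 0}$ to a single-valued analytic (resp.\ meromorphic) function on $\C\setminus(\mathbb{F}\cup\{0\})$ (resp.\ $\C\setminus\{0\}$). The key observation is that although $\log(z)$, $\zeta_z$, and $\K_z$ individually are branch-dependent, the \emph{solution} $u_z$ of the original quasi-periodic problem \eqref{eq:quasi1}--\eqref{eq:quasi2} is characterized intrinsically: $u_z\in H^1(\Omega_0)$ solves $\Delta u_z+k^2qu_z=f_z$ with Neumann data on $\partial\Omega_0$ and the $z$-quasi-periodic condition \eqref{eq:z_quasi}, and this characterization involves $z$ only through the \emph{ratios} $u_z|_{\Gamma_1}=z\,u_z|_{\Gamma_0}$, which are entire in $z$. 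So I would argue: pick a point $z_0\in\R_{<0}$; approach it from the upper half-plane (within $\C_\times$) and from the lower half-plane (within the conjugate branch $\C\setminus\R_{\ge 0}$), obtaining in each branch a solution operator; since in both branches the constructed $u_z$ solves the \emph{same} boundary value problem \eqref{eq:quasi1}--\eqref{eq:quasi2} with the same analytically-continued data $f_z$, and that problem has a unique solution whenever $z\notin\mathbb{F}$ (here I invoke Theorem~\ref{th:z-eq} applied in the second branch, or equivalently note that $\mathbb{F}$ is intrinsic to the BVP and does not depend on the branch choice), the two limiting values agree. Hence the boundary values match across $\R_{<0}$ and, by Morera's theorem applied to the $X$-valued (or $H^1(\Omega_0)$-valued) function together with the edge-of-the-wedge type matching, $u_z$ continues analytically across $\R_{<0}\setminus\mathbb{F}$; the remaining points of $\mathbb{F}$ on $\R_{<0}$ are isolated by Property~b), and near them $u_z$ is meromorphic because it is so from both sides with matching principal parts (again forced by uniqueness of the BVP on a punctured neighborhood). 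This yields a single-valued analytic function on $\C\setminus(\mathbb{F}\cup\{0\})$, meromorphic on $\C\setminus\{0\}$.

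I expect the main obstacle to be making the branch-independence argument fully rigorous, i.e.\ verifying carefully that the object produced by the $\zeta_z$/$\K_z$ construction in the second branch really is a solution of the \emph{same} intrinsic problem and that $\mathbb{F}$ is the same set regardless of branch. The cleanest route is probably to bypass the slit-plane construction near $\R_{<0}$ entirely: for $z$ in a neighborhood of a regular point $z_0\in\R_{<0}$, set up the variational problem \eqref{eq:quasi1}--\eqref{eq:quasi2} directly using a fixed local holomorphic branch of $\log$ valid on that neighborhood (any simply connected neighborhood of $z_0$ avoiding $0$ admits one), run the same Riesz/Analytic-Fredholm argument as in Section~4 and Theorem~\ref{th:z-eq} to get local analyticity/meromorphy of $u_z$ there, and then observe that on the overlap of this local chart with $\C_\times$ the two solutions coincide by uniqueness of the BVP — so the local representations patch into a global single-valued function. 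In other words, one covers $\C\setminus\{0\}$ by simply connected charts, solves on each chart, and glues by uniqueness; the poles are exactly $\mathbb{F}$ on every chart by the same argument as in Theorem~\ref{th:z-eq}. The only technical care needed is the uniform (in the Definition~\ref{def} sense) analytic dependence of the patched data $\zeta_z^{-1}f_z$ on each chart, which is routine since $z\mapsto z^{-x_1}=e^{-x_1\log z}$ is holomorphic with values bounded uniformly in $x_1\in[0,1]$ on any compact chart away from $0$.
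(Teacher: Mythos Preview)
Your proposal is correct and follows essentially the same approach as the paper: analyticity/meromorphy on the slit plane via Theorem~\ref{th:z-eq} and the conjugation $u_z=\zeta_z(I-\K_z)^{-1}\zeta_z^{-1}\widetilde f_z$, followed by the branch-independence argument based on the fact that the intrinsic BVP \eqref{eq:quasi1}--\eqref{eq:quasi2} is uniquely solvable for $z\notin\mathbb{F}$, so $u_z$ is single-valued and extends across the cut. The paper compresses this into two sentences, while you supply the careful chart-and-glue justification; your added rigor is appropriate and the obstacles you flag are exactly the ones the paper elides.
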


Note that $0$ is special as it is the branch point of $\zeta_z$. At this point, neither $\K_z$ or $\B_z$ is well-defined. As $u_z$ depends meromorphically on $z\in \C\setminus\{0\}$, no matter $0$ lies in $\mathbb{F}$ or not, it is the only accumulation point of the set $\mathbb{F}$ except for the infinity \high{(see Property b))}.

\high{

At the end of this section, we discuss the distribution of the set $\mathbb{F}=\mathbb{F}(k^2)$. When $k^2\in\sigma(\A)$, the set $\mathbb{UF}$ is not empty. We define three subsets of $\mathbb{UF}$ from the definitions of $P_\pm $ and $P_0 $ in the last section by:
\begin{equation*}
S_\pm^0:=\left\{z=\exp(\i\alpha):\,\alpha\in P_\pm \right\},\quad S_0^0:=\left\{z=\exp(\i\alpha):\,\alpha\in P_0 \right\}.
\end{equation*} 
When Assumption \ref{asp1} is satisfied, $S_0^0=\emptyset$, then $\mathbb{UF}=S_+^0\cup S_-^0$. From the definitions of $P_\pm $, when $z\in S_+^0$, the corresponding propagating Floquet mode is propagating to the right; while when $z\in S_-^0$, the corresponding propagating Floquet mode is propagating to the left. See Figure \ref{fig:z_alpha} for the unit Floquet multipliers in both $\alpha$- and $z$-space. 
 
 \begin{figure}[ht]
\centering
\begin{tabular}{c  c}
\includegraphics[width=0.4\textwidth,height=0.36\textwidth]{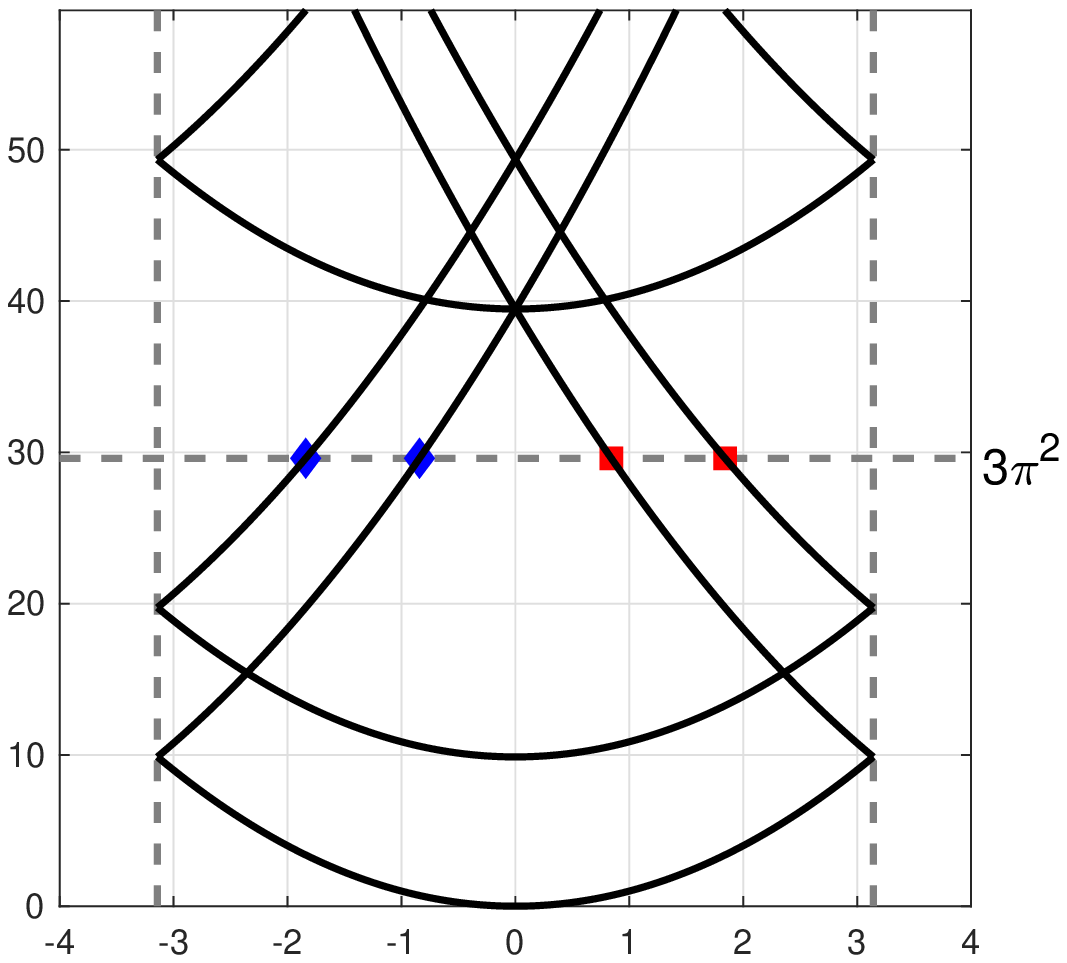} 
& \includegraphics[width=0.4\textwidth,height=0.36\textwidth]{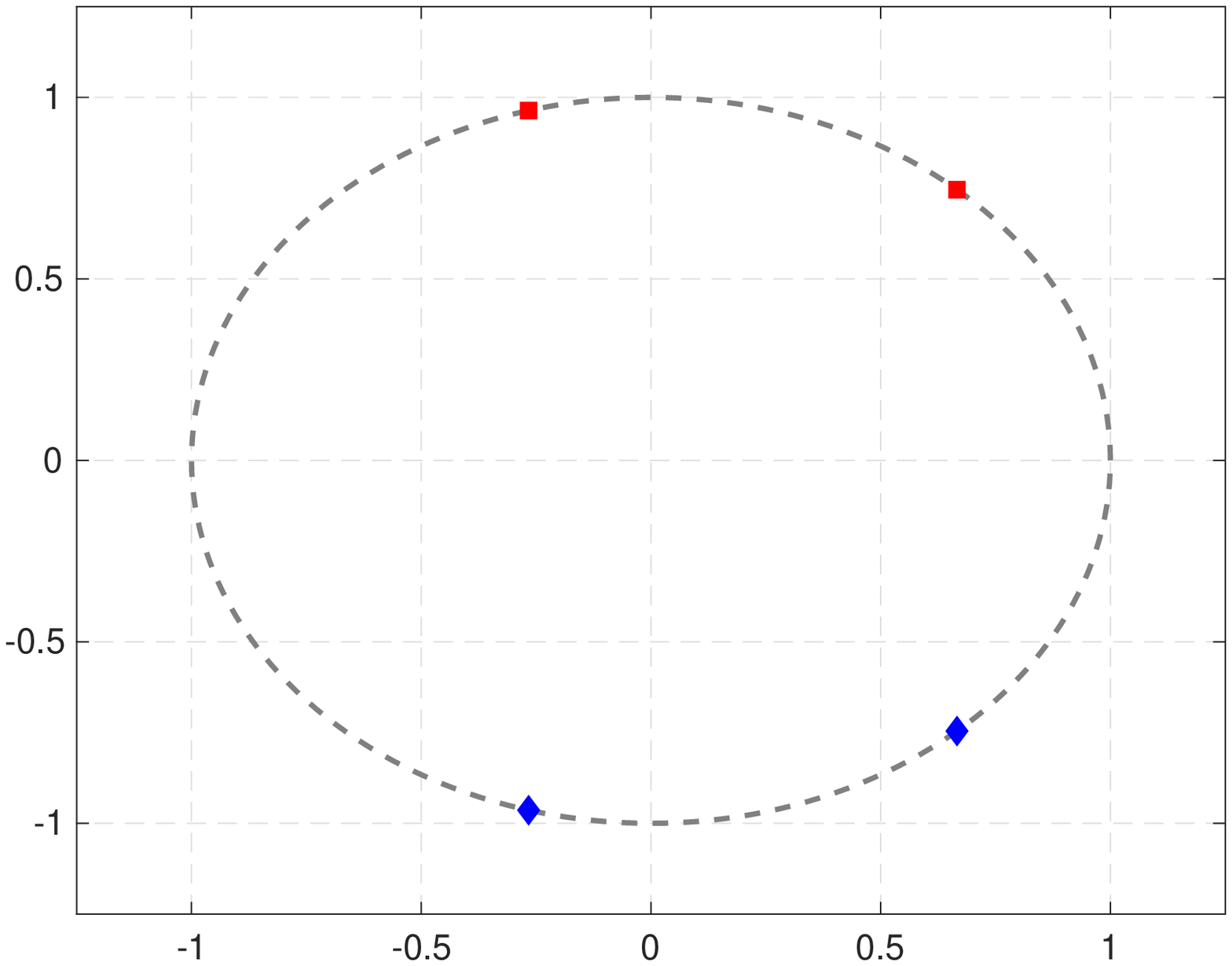}\\[-0cm]
\end{tabular}
\caption{Example for $n=1$ and $k^2=3\pi^2$. $\mathbb{UF}$ in $\alpha$-space and $z$-space. Red squares \high{denote} the points in $P_-$ ($\S_-^0$), while blue diamonds \high{denote} the points in $P_+$ ($\S_+^0$).}
\label{fig:z_alpha}
\end{figure}

 We can also define the subsets of $\mathbb{F}\setminus\mathbb{UF}$ as follows:
\begin{equation*}
 RS:=\left\{z\in\mathbb{F}:\,|z|<1\right\};\quad LS:=\left\{z\in\mathbb{F}:\,|z|>1\right\}.
\end{equation*}
The Bloch wave solution \high{corresponding} to $z\in RS$ is evanescent while the one \high{corresponding} to $z\in LS$ is \high{exponentially increasing}.
\high{Finally, let us set}
\begin{equation*}
 S_+:=S_+^0\cup RS,\quad S_-:=S_-^0\cup LS.
\end{equation*}
When Assumption \ref{asp1} is satisfied,  $S_+^0\cup S_-^0=\mathbb{UF}$ and $S_+\cup S_-=\mathbb{F}$. The distributions of $RS$ and $LS$ are described in the following lemma.

\begin{lemma}\label{th:s_pm_dist}
Let $k\in\C$ satisfy $\Re(k)>0$ and $\Im(k)\geq 0$ . 
 There is a $\tau>0$ such that $RS\high{\subset} B(0,\exp(-\tau))$ and $LS\high{\subset}\C\setminus\overline{B(0,\exp(\tau))}$.
\end{lemma}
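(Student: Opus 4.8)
The plan is to read the gap around the unit circle off the two structural facts already recorded: Property a) (the symmetry $z \in \mathbb{F} \Leftrightarrow z^{-1} \in \mathbb{F}$) and Property b) ($\mathbb{F}$ is discrete with accumulation points only at $0$ and $\infty$). I would first reduce the $LS$ part to the $RS$ part. By Property a), the inversion $z \mapsto z^{-1}$ is a bijection of $\mathbb{F}$ interchanging $\{|z|<1\}$ and $\{|z|>1\}$, so $LS = \{\,z^{-1} : z \in RS\,\}$; hence any bound $RS \subset B(0,e^{-\tau})$ automatically forces $|w| > e^{\tau}$ for all $w \in LS$, i.e. $LS \subset \C \setminus \overline{B(0,e^{\tau})}$. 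It therefore suffices to prove $\sup_{z \in RS}|z| < 1$.

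I would establish this by contradiction. Suppose $\sup_{z \in RS}|z| = 1$ and choose $z_n \in RS$ with $|z_n| \to 1$. Fix any $\rho \in (0,1)$; then all but finitely many $z_n$ lie in the compact annulus $A_\rho := \{\, \rho \le |z| \le 1 \,\}$, which contains neither $0$ nor $\infty$. By Property b), $\mathbb{F}$ has no accumulation point in $A_\rho$, so by Bolzano--Weierstrass $\mathbb{F} \cap A_\rho$ is finite and hence $\{z_n\}$ is finite. But a finite subset of the open disk $B(0,1)$ has maximal modulus strictly less than $1$, contradicting $|z_n| \to 1$.

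Consequently $\rho_0 := \sup_{z \in RS}|z| < 1$ (with the convention $\rho_0 := 0$ if $RS = \emptyset$). Choosing $\tau := -\tfrac12 \log \rho_0 > 0$ when $RS \neq \emptyset$ and any $\tau > 0$ otherwise, one has $e^{-\tau} = \sqrt{\rho_0} > \rho_0 \ge |z|$ for all $z \in RS$, so $RS \subset B(0,e^{-\tau})$; combined with the first paragraph, this gives the lemma.

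There is no serious obstacle in this proof: all the analysis is already packaged into Property b), and what remains is the elementary compactness observation that ``no accumulation of $\mathbb{F}$ on $\UC$'' upgrades to a uniform gap $\{\,e^{-\tau} < |z| < e^{\tau}\,\} \cap (RS \cup LS) = \emptyset$. The only case requiring a word of care is the degenerate one $RS = \emptyset$, which is harmless since then every $\tau > 0$ works. If one wanted an explicit constant rather than an abstract one, one could instead invoke the analyticity of $(I-\K_z)^{-1}$ on $\UC \setminus \mathbb{UF}$ together with its meromorphy across the circle to locate the nearest pole, but this would only re-derive Property b).
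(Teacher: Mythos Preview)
Your proof is correct. It differs from the paper's argument in two respects. First, you exploit Property~a) to reduce the $LS$ statement to the $RS$ statement via the inversion $z\mapsto z^{-1}$, whereas the paper treats both sides simultaneously. Second, for the $RS$ bound you argue by contradiction directly from Property~b): a hypothetical sequence in $RS$ with moduli approaching~$1$ would accumulate in a compact annulus away from $0$ and $\infty$, which is forbidden. The paper instead runs a Heine--Borel covering of the unit circle: it surrounds each unit Floquet multiplier by a small ball isolating it in~$\mathbb{F}$, surrounds each remaining point of $\UC$ by a ball disjoint from~$\mathbb{F}$ (invoking perturbation theory), extracts a finite subcover, and then shrinks an annular neighbourhood of $\UC$ inside that cover. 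Both approaches ultimately rest on the same fact---$\mathbb{F}$ does not accumulate on~$\UC$---but your route is shorter and avoids the explicit covering construction; the paper's route, on the other hand, makes the open neighbourhood of $\UC$ free of $RS\cup LS$ more tangible and does not need to appeal to the $z\leftrightarrow z^{-1}$ symmetry.
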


\begin{proof}
 If $k^2\in \sigma(A)$, from Property b) and c), $\mathbb{F}$ is discrete and $\mathbb{F}\supset\mathbb{UF}\neq\emptyset$ is a finite set . Let $\mathbb{UF}=\{z_1,\dots,z_N\}$. As $\mathbb{F}$ is discrete, for any $n=1,\dots,N$, there is a $\delta_n>0$ such that $B(z_n,\delta_n)\cap\mathbb{F}=\{z_n\}$. From the perturbation theory, for any $z\in\UC\setminus\mathbb{UF}$, there is a $\delta_z>0$ such that $B(z,\delta_z)\cap\mathbb{F}=\emptyset$. As $\UC\subset\left[\cup_{n=1}^N B(z_n,\delta_n)\right]\cup\left[\cup_{z\in\UC\setminus\mathbb{UF}}B(z,\delta_z)\right]$, from Heine-Borel theorem, there is finite number of $z$'s on $\UC\setminus\mathbb{UF}$, denoted by $z_{N+1},\dots,z_M$ such that $\UC\subset\left[\cup_{n=1}^N B(z_n,\delta_n)\right]\cup\left[\cup_{z=n+1}^M B(z,\delta_z)\right]$. Let $\tau>0$ be a small enough number such that
 \begin{equation*}
 B(0,\exp(\tau))\setminus\overline{B(0,\exp(-\tau))}\subset \left[\cup_{n=1}^N B(z_n,\delta_n)\right]\cup\left[\cup_{z=N+1}^M B(z,\delta_z)\right],
 \end{equation*}
then $\left(RS\cup LS\right)\cap T_\tau=\emptyset$, where
\[T_\tau=\left\{z\in\C:\,\exp(-\tau)<|z|\exp(\tau)\right\}.\]
Thus $RS\in B(0,\exp(-\tau))$ and $LS\in\C\setminus\overline{B(0,\exp(\tau))}$. When $k^2\notin\sigma(\A)$, the result is obtained simply from the perturbation theory and Heine-Borel theorem.  The proof is finished.
 
\end{proof}
}

\section{Floquet-Bloch transform and its application}

\subsection{Floquet-Bloch transform}

The Floquet-Bloch transform is a very important tool in the analysis of scattering problems with periodic structures, see \cite{Kirsc2017,Kirsc2017a,Fliss2015}. The Floquet-Bloch transform is an extension of the Fourier series (see \cite{Lechl2016,Kirsc2017,Kirsc2017a,Fliss2015}). We first recall the definition and some important properties of the Floquet-Bloch transform.

\begin{definition}
 For any $\phi\in C_0^\infty\left(\Omega\right)$, let $(\F \phi)(z,x)$ be the Floquet-Bloch transform of $\phi$ defined by
\begin{equation}\label{eq:def_F}
 (\F \phi)(z,x):=\sum_{\high{n}\in\Z}\phi(x_1+n,{x_2})z^{-n},\quad x\in\Omega_0,\,z\in\UC.
\end{equation}
\end{definition}

Some important properties of the Floquet-Bloch transform are \high{given} in the following theorems.

\begin{theorem}\label{th:prop_F}
The operator $\F $ has the following properties when $z$ lies on the unit circle $\UC$ (see \cite{Lechl2016,Kuchm2016}):
\begin{itemize}
\item $\F$ is an isomorphism between $H^s(\Omega)$ and $L^2(\UC;H^s_z(\Omega_0))$ (where $s\in\R$), where 
\begin{equation*}
L^2(\UC;H^s_z(\Omega_0)):=\left\{\phi\in\mathcal{D}'(\UC\times\Omega_0):\,\left[\int_{\UC}\left\|\phi(z,\cdot)\right\|^2_{H^s_z(\Omega_0)}\d z\right]^{1/2}<\infty\right\}.
\end{equation*}
\item $\F \phi$ depends analytically on $z\in \UC$, if and only if $\phi$ decays exponentially \high{at  infinity}.
\end{itemize}
\end{theorem}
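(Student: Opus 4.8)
The plan is to prove both items essentially by reducing to the classical Fourier-series analogue, since $\F$ on the unit circle is a periodic version of the Fourier transform and the statements are the waveguide analogues of: (i) Parseval's identity, and (ii) the fact that analyticity of a Fourier series corresponds to exponential decay of coefficients. For the first item, I would first establish the isometry on a dense subspace. Take $\phi\in C_0^\infty(\Omega)$; then for $z=e^{i\theta}\in\UC$ the sum in \eqref{eq:def_F} is finite, and $(\F\phi)(z,\cdot)$ lies in $H^s_z(\Omega_0)$ for every $s$ (only finitely many translates are involved, and the $z$-quasi-periodicity for $j=0$ holds by construction of the sum over all $n\in\Z$). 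Writing $\phi_n:=\phi(\cdot+n,\cdot)|_{\Omega_0}$, one computes $\int_{\UC}\|(\F\phi)(z,\cdot)\|^2_{H^s_z(\Omega_0)}\,\d z=\sum_{n\in\Z}\|\phi_n\|_{H^s(\Omega_0)}^2=\|\phi\|_{H^s(\Omega)}^2$, using orthogonality of $\{z^{-n}\}_{n\in\Z}$ in $L^2(\UC)$ together with the (Sobolev-space-compatible) definition of the norm on $H^s_z(\Omega_0)$ via the periodic representative $z^{-x_1}(\F\phi)(z,\cdot)\in H^s_{\p}(\Omega_0)$. So $\F$ is an isometry from $C_0^\infty(\Omega)\subset H^s(\Omega)$ into $L^2(\UC;H^s_z(\Omega_0))$, and since $C_0^\infty(\Omega)$ is dense in $H^s(\Omega)$ (for integer $s\ge 0$; for general real $s$ one uses the standard definition of $H^s$ and the same density statement), it extends to an isometry on all of $H^s(\Omega)$. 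Surjectivity follows by exhibiting the inverse: given $g\in L^2(\UC;H^s_z(\Omega_0))$, set $(\F^{-1}g)(x_1+n,x_2):=\int_{\UC}g(z,x)\,z^{n}\,\frac{\d z}{2\pi}$ (interpreting the integral over the circle), check it is well-defined on $\Omega$ using the quasi-periodicity of $g(z,\cdot)$, and verify $\F\F^{-1}=\mathrm{id}$ and $\F^{-1}\F=\mathrm{id}$ on the dense subspaces; the norm identity above shows $\F^{-1}$ is bounded, hence $\F$ is an isomorphism.

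For the second item, I would argue in both directions. If $\phi$ decays exponentially, say $\|\phi_n\|_{H^s(\Omega_0)}\le Ce^{-\delta|n|}$, then the series $\sum_{n\in\Z}\phi_n\,z^{-n}$ converges absolutely and uniformly (in $H^s_z(\Omega_0)$-norm, after passing to the periodic representative) for $z$ in the annulus $e^{-\delta}<|z|<e^{\delta}$, hence defines a Laurent series whose sum is an analytic $H^s_{\p}(\Omega_0)$-valued function there; its restriction to $\UC$ is $\F\phi$, which is therefore analytic near $\UC$ in the sense of Definition \ref{def}. Conversely, if $\F\phi$ is analytic on $\UC$, then (again working with the periodic representative, an $H^s_{\p}(\Omega_0)$-valued function) it extends analytically to some annulus $r<|z|<R$ with $r<1<R$; expanding in a Laurent series and matching coefficients with \eqref{eq:def_F} gives $\phi_n=\frac{1}{2\pi i}\oint_{|z|=\rho}(\F\phi)(z,\cdot)z^{n-1}\,\d z$ for any $\rho$ in the annulus, and the standard Cauchy estimate on a circle of radius $\rho\in(r,1)$ (resp. $\rho\in(1,R)$) yields $\|\phi_n\|_{H^s(\Omega_0)}\le C\rho^{n}$ for $n\le 0$ (resp. $n\ge 0$), i.e. exponential decay in both directions. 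Summing over $n$ then gives exponential decay of $\phi$ at infinity in the $H^s_{\loc}$ sense.

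The main obstacle I anticipate is purely bookkeeping rather than conceptual: making the norm identity $\int_{\UC}\|(\F\phi)(z,\cdot)\|^2_{H^s_z(\Omega_0)}\,\d z=\|\phi\|^2_{H^s(\Omega)}$ precise for general real $s$, since the space $H^s_z(\Omega_0)$ is defined through the unitary twist $u\mapsto z^{-x_1}u$ carrying it to the fixed space $H^s_{\p}(\Omega_0)$, and one must check that this twist interacts correctly with the $H^s$ norm and with the integration in $z$ (essentially a Fubini/Plancherel argument on $\UC$). In practice one cites \cite{Lechl2016,Kuchm2016} for this; the contribution of the present proof is just to assemble these classical facts. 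Everything else — density of $C_0^\infty$, absolute convergence of Laurent series, Cauchy estimates — is standard.
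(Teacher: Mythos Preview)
The paper does not actually prove Theorem~\ref{th:prop_F}; it is stated as a known result with a citation to \cite{Lechl2016,Kuchm2016}, and no argument is given in the text. Your proposal is therefore not being compared against any proof in the paper itself.

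That said, your sketch is the standard argument one finds in those references: Parseval/Plancherel on $\UC$ combined with density of $C_0^\infty(\Omega)$ for the isomorphism, and Laurent expansion plus Cauchy estimates for the equivalence between analyticity and exponential decay. One genuine technical point you flag but slightly understate: the identity
\[
\sum_{n\in\Z}\|\phi_n\|_{H^s(\Omega_0)}^2=\|\phi\|_{H^s(\Omega)}^2
\]
is \emph{not} literally true for $s\neq 0$ if the left-hand side uses the intrinsic $H^s(\Omega_0)$ norm on each cell, since the $H^s(\Omega)$ norm is not additive over a partition into cells. The usual fix is to define $H^s_z(\Omega_0)$ so that its norm is exactly the one inherited from $H^s(\Omega)$ via the Floquet isomorphism (equivalently, via the periodic representative and a Fourier-series characterization of $H^s_\p$), in which case the identity becomes a definition rather than a computation. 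This is precisely the ``bookkeeping'' issue you anticipate, and it is handled carefully in the cited references; your outline is otherwise correct.
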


The inverse Floquet-Bloch transform is described in the following theorem.

\begin{theorem}\label{th:inv_F}
Given $\psi(z,x)\in L^2\left(\UC; H^s_z(\Omega)\right)$,  the inverse operator $\F$ is given by:
 \begin{equation}\label{eq:def_invF}
  (\F^{-1}\psi)(x_1+n,{x_2})=\frac{1}{2\pi\i}\oint_{\UC}\psi(z,x)z^{n-1}\d z,\quad\forall\,n\in\Z.
 \end{equation}
\end{theorem}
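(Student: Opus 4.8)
The plan is to prove that \eqref{eq:def_invF} indeed inverts $\F$ by a direct computation based on the definition \eqref{eq:def_F} together with the orthogonality of monomials on the unit circle. First I would take $\phi\in C_0^\infty(\Omega)$, compute $\psi(z,x):=(\F\phi)(z,x)=\sum_{m\in\Z}\phi(x_1+m,x_2)z^{-m}$, and then insert this into the right-hand side of \eqref{eq:def_invF}:
\begin{equation*}
\frac{1}{2\pi\i}\oint_{\UC}\psi(z,x)z^{n-1}\d z
=\frac{1}{2\pi\i}\oint_{\UC}\sum_{m\in\Z}\phi(x_1+m,x_2)z^{n-1-m}\d z.
\end{equation*}
Since $\phi$ has compact support, the sum over $m$ is finite for each $x$, so I may interchange sum and integral, and then use the standard residue/orthogonality identity $\frac{1}{2\pi\i}\oint_{\UC}z^{k-1}\d z=\delta_{k,0}$. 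Only the term $m=n$ survives, yielding $\phi(x_1+n,x_2)$, which is exactly the claimed formula for $(\F^{-1}\psi)(x_1+n,x_2)$. This establishes $\F^{-1}\F=\mathrm{id}$ on the dense subspace $C_0^\infty(\Omega)$.

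Next I would check the converse, $\F\F^{-1}=\mathrm{id}$, at least on a dense subset: starting from $\psi\in L^2(\UC;H^s_z(\Omega_0))$ that depends analytically on $z$ (so that, by Theorem~\ref{th:prop_F}, it is the transform of an exponentially decaying function and the Laurent expansion converges nicely), write $\psi(z,x)=\sum_{n\in\Z}\psi_n(x)z^{-n}$ with $\psi_n(x)=\frac{1}{2\pi\i}\oint_{\UC}\psi(z,x)z^{n-1}\d z$, observe that the $z$-quasi-periodicity of $\psi(z,\cdot)$ forces the coefficients $\psi_n$ to be the translates of a single function on $\Omega$, define $\phi$ by $\phi(x_1+n,x_2):=\psi_n(x_1,x_2)$ for $x\in\Omega_0$, and verify directly from \eqref{eq:def_F} that $\F\phi=\psi$. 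Finally I would invoke the isomorphism statement of Theorem~\ref{th:prop_F}: since $\F$ is bounded and boundedly invertible between $H^s(\Omega)$ and $L^2(\UC;H^s_z(\Omega_0))$, and \eqref{eq:def_invF} agrees with $\F^{-1}$ on a dense subspace, it extends by continuity to all of $L^2(\UC;H^s_z(\Omega))$, interpreting the contour integral in the appropriate $L^2$/$H^s$ sense.

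The main obstacle I anticipate is not the formal computation — which is essentially the inversion of a Fourier series — but the functional-analytic justification of the interchange of summation and contour integration, and the precise sense in which the integral \eqref{eq:def_invF} converges when $\psi$ is merely in $L^2(\UC;H^s_z(\Omega_0))$ rather than analytic in $z$. For smooth compactly supported $\phi$ the series is finite and everything is elementary; for general $\psi$ one must argue by density, using the boundedness of $\F$ from Theorem~\ref{th:prop_F} to control the limit, and take care that the quasi-periodicity condition \eqref{eq:z_quasi} built into $H^s_z(\Omega_0)$ is exactly what makes the Laurent coefficients $\psi_n$ consistent translates of one global function. Since all the heavy lifting (the isomorphism property, the analyticity characterisation) is already quoted from \cite{Lechl2016,Kuchm2016,Kuchm2016}, the proof here can be kept short: do the $C_0^\infty$ computation explicitly, then cite Theorem~\ref{th:prop_F} for the extension.
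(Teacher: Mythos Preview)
Your proposal is correct and follows the standard route: verify the inversion formula on $C_0^\infty(\Omega)$ by the orthogonality relation $\frac{1}{2\pi\i}\oint_{\UC}z^{k-1}\d z=\delta_{k,0}$, then extend by density using the isomorphism property already quoted in Theorem~\ref{th:prop_F}.

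There is, however, nothing to compare against: the paper does \emph{not} supply its own proof of Theorem~\ref{th:inv_F}. The theorem is stated without proof, as a known fact about the Floquet--Bloch transform, with the implicit understanding that it follows from the references \cite{Lechl2016,Kuchm2016} cited just above in Theorem~\ref{th:prop_F}. So your write-up would in fact be \emph{adding} a proof where the paper has none. If the intent is to match the paper, you should simply state the result and cite the literature; if the intent is to make the exposition self-contained, your argument is exactly what one would expect and is sound. The only minor point: in your converse step you can drop the analyticity hypothesis on $\psi$ and work directly with trigonometric polynomials in $z$ (finite Laurent sums), which are dense in $L^2(\UC;H^s_z(\Omega_0))$ and for which both computations are purely algebraic; this avoids any appeal to the exponential-decay characterisation.
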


Now we have enough information \high{about} the Floquet-Bloch transform, the next step is to apply it to the analysis of scattering problems \high{\eqref{eq:wg1}-\eqref{eq:wg2}.}

\subsection{Application of the Floquet-Bloch transform: $\epsilon>0$}

In this section, we apply the Floquet-Bloch transform $\F$ to the scattering problem \eqref{eq:wg1}-\eqref{eq:wg2}, when $k^2$ is replaced by $k^2_\epsilon=k^2+\i\epsilon$ for some fixed $\epsilon>0$. From Lax-Milgram theorem, $k^2_\epsilon\notin\sigma(A)$. \high{From the Floquet-Bloch theory, $\mathbb{UF}(k^2+\i\epsilon)=\emptyset$. Note that in this section, the set of (unit) Floquet multipliers depending on $\epsilon$ is written as $\mathbb{F}(k^2+\i\epsilon)$ ($\mathbb{UF}(k^2+\i\epsilon)$). From Sec. 6.2. in \cite{Joly2006}, any element in the set $\mathbb{F}(k^2+\i\epsilon)$  (in \cite{Joly2006}, it is an eigenvalue $\lambda(\epsilon)$ of the operator $\mathcal{R}^+_\epsilon$) depends continuously on $\epsilon>0$.}

As $f$ is compactly supported and $\epsilon>0$, the problem \eqref{eq:wg1}-\eqref{eq:wg2} has a unique solution $u_\epsilon\in H^1(\Omega)$. Moreover, $u_\epsilon$ decays exponentially at the infinity \high{(see \cite{Ehrhardt2009})}, i.e., $u_\epsilon$ satisfies 
\begin{equation*}
 |u_\epsilon(x_1,x_2)|\leq C\exp(-\gamma|x_1|)
\end{equation*}
for some constants $C>0$ \high{that does not depend on $x_1$} and $\gamma>0$ \high{(see Definition 4 in \cite{Fliss2015})}. We define the Floquet-Bloch transform $w_\epsilon(z,x):=(\F u_\epsilon)(z,x)$, then the transformed field is well-defined in $L^2({\UC};H^1_z(\Omega_0))$. Moreover, it depends analytically on $z\in\UC$. It is also easy to check that for any fixed $z\in \UC$, $w_\epsilon(z,\cdot)\in H_z^1(\Omega_0)$ \high{is a weak solution of}
\begin{eqnarray}\label{eq:wg_z1}
  \Delta w_\epsilon(z,\cdot)+k^2_\epsilon q w_\epsilon(z,\cdot)&=&f\quad\text{ in }\Omega_0;\\\label{eq:wg_z2}
  \frac{\partial w_\epsilon(z,\cdot)}{\partial \nu}&=&0\quad\text{ on }\partial{\Omega_0}.
\end{eqnarray}
As ${\rm supp}(f)\subset\Omega_0$, $(\F f )(z,x)=f(x)$ for any $z\in \UC$.

From Theorem \ref{th:inv_F}, the original field $u_\epsilon$ has the following representation:
\begin{equation}\label{eq:solution_eps}
 u_\epsilon(x_1+n,x_2)= (\F^{-1} w_\epsilon)(x_1+n,x_2)=\frac{1}{2\pi i}\oint_{\UC} w_\epsilon(z,x)z^{n-1}\d z.
\end{equation}
\high{Thus the solution $u_\epsilon$ of \eqref{eq:wg1}-\eqref{eq:wg2} is obtained from the contour integration of its Floquet-Bloch transformed field $w_\epsilon(z,\cdot)$, which is a family of solutions of cell problems \eqref{eq:wg_z1}-\eqref{eq:wg_z2}. On the other hand, from Theorem \ref{th:solution_z_uc},  when $z\in\C\setminus\mathbb{F}\high{(k^2+\i\epsilon)}$, the problem \eqref{eq:wg_z1}-\eqref{eq:wg_z2} is uniquely solvable in $H_z^1(\Omega_0)$. Thus $w_\epsilon(z,\cdot)$ is extended analytically to $z\in\C\setminus(\mathbb{F}\high{(k^2+\i\epsilon)}\cup\{0\})$ and meromorphically to $z\in\C\setminus\{0\}$. Then we can completely forget the Floquet-Bloch transform process and only focus on the relationship \eqref{eq:solution_eps} between two independent problems, i.e., the waveguide problem \eqref{eq:wg1}-\eqref{eq:wg2} and the cell problem \eqref{eq:wg_z1}-\eqref{eq:wg_z2}. The result is concluded in the following theorem.}\\

\begin{theorem}
 For any $k^2+\i\epsilon$ with $\epsilon>0$, the Floquet-Bloch transform $w_\epsilon(z,x):=(\F u_\epsilon)(z,x)$ \high{can} be extended to an analytic function in $\C\setminus(\mathbb{F}\high{(k^2+\i\epsilon)}\cup\{0\})$ by the solution $w_\epsilon(z,x)$ of \high{\eqref{eq:wg_z1}-\eqref{eq:wg_z2}}. Moreover, the function is meromorphic in $\C\setminus\{0\}$ where its poles are exactly all the elements $\mathbb{F}$.
\end{theorem}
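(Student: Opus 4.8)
The plan is to chain together three facts already established in the excerpt: (i) for $\epsilon>0$ the solution $u_\epsilon$ decays exponentially, so by Theorem~\ref{th:prop_F} its Floquet--Bloch transform $w_\epsilon(z,\cdot)=(\F u_\epsilon)(z,\cdot)$ is \emph{analytic} in $z$ on $\UC$ and lies in $L^2(\UC;H^1_z(\Omega_0))$; (ii) for each fixed $z\in\UC$ this transform is a weak solution of the cell problem \eqref{eq:wg_z1}--\eqref{eq:wg_z2} with right-hand side $f$ (using $(\F f)(z,\cdot)=f$ since $\operatorname{supp}f\subset\Omega_0$); and (iii) Theorem~\ref{th:solution_z_uc}, applied with $k$ replaced by $\sqrt{k^2+\i\epsilon}$ (which still satisfies $\Re>0$, $\Im\ge 0$) and with the $z$-independent right-hand side $f_z\equiv f$, gives a function $z\mapsto \widetilde w_\epsilon(z,\cdot)\in H^1_z(\Omega_0)$ that is analytic on $\C\setminus(\mathbb{F}(k^2+\i\epsilon)\cup\{0\})$, meromorphic on $\C\setminus\{0\}$, and solves \eqref{eq:wg_z1}--\eqref{eq:wg_z2}.

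First I would invoke uniqueness: by Theorem~\ref{th:solution_z_uc} the cell problem \eqref{eq:wg_z1}--\eqref{eq:wg_z2} has, for every $z\in\C\setminus\mathbb{F}(k^2+\i\epsilon)$ with $z\neq 0$, exactly one weak solution in $H^1_z(\Omega_0)$. Since $w_\epsilon(z,\cdot)$ and $\widetilde w_\epsilon(z,\cdot)$ both solve it for $z\in\UC$ (and $\mathbb{UF}(k^2+\i\epsilon)=\emptyset$ because $k^2+\i\epsilon\notin\sigma(\A)$, so all of $\UC$ is admissible), they coincide on $\UC$. Hence $\widetilde w_\epsilon$ is the desired analytic (resp.\ meromorphic) extension of $w_\epsilon$ off the unit circle.

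It remains to identify the pole set precisely as $\mathbb{F}:=\mathbb{F}(k^2+\i\epsilon)$. One inclusion is immediate: $\widetilde w_\epsilon$ is analytic on $\C\setminus(\mathbb{F}\cup\{0\})$, and $0$ is not a genuine pole — near $0$ the function is meromorphic by Theorem~\ref{th:solution_z_uc} and, since $\mathbb{F}$ accumulates only at $0$ and $\infty$ (Property~b), the poles of $\widetilde w_\epsilon$ lie in $\mathbb{F}$. For the reverse inclusion, suppose $z_0\in\mathbb{F}(k^2+\i\epsilon)$ were a removable point of $\widetilde w_\epsilon$. Then, recalling the reduction $u_z=(I-\B_z)^{-1}\widetilde f_z$ with $\B_z=\zeta_z\K_z\zeta_z^{-1}$ and that $\mathbb{F}=\S$ is exactly the pole set of $(I-\K_z)^{-1}$ (Theorem~\ref{th:z-eq}), I would argue that analyticity of the solution at $z_0$ together with the Fredholm structure (the residue of $(I-\K_z)^{-1}$ at $z_0$ is a finite-rank operator whose range consists of Bloch wave solutions, i.e.\ nontrivial elements of $H^1_{z_0}(\Omega_0)$) forces the right-hand side $\widetilde f_{z_0}$ to be orthogonal to that finite-dimensional space; but since the right-hand side here is the fixed function $f$ with $\operatorname{supp}f\subset\Omega_0$ and $f$ is a generic $L^2$ datum, this orthogonality fails — this is precisely where the hypothesis that the scattering problem is \emph{not} uniquely solvable for $k^2$ (the reason we are in a periodic waveguide and not a rough-surface setting) makes the Bloch modes actually contribute. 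Concluding, every $z_0\in\mathbb{F}(k^2+\i\epsilon)$ is a genuine pole, so the pole set is exactly $\mathbb{F}$.

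The main obstacle is this last step — showing that \emph{no} pole is spuriously cancelled by the particular source $f$. The clean way to handle it is to note that the statement as written (``its poles are exactly all the elements $\mathbb{F}$'') should be read with the convention already in force in the paper, namely that $\mathbb{F}(k^2+\i\epsilon)$ is \emph{defined} as the pole set of $(I-\K_z)^{-1}$ (Property~d and the remark after Theorem~\ref{th:z-eq}); under that reading the identification is automatic from $u_z=\zeta_z(I-\K_z)^{-1}\zeta_z^{-1}\widetilde f_z$ once one checks that $\zeta_z$ and $\widetilde f_z$ contribute no poles or zeros for $z\neq 0$, which is routine. I would therefore present the proof in the short form: exponential decay $\Rightarrow$ analyticity on $\UC$; cell-problem identity plus uniqueness $\Rightarrow$ the Theorem~\ref{th:solution_z_uc} extension agrees with $\F u_\epsilon$; and the pole structure is inherited verbatim from Theorems~\ref{th:z-eq} and~\ref{th:solution_z_uc}, with the accumulation point $0$ excluded by Property~b.
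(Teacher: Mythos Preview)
Your approach matches the paper's: the theorem there is not given a separate proof but is stated as a summary of the preceding paragraph (exponential decay of $u_\epsilon$ $\Rightarrow$ analyticity of $\F u_\epsilon$ on $\UC$; the cell-problem identity for each $z\in\UC$; and Theorem~\ref{th:solution_z_uc} supplying the analytic/meromorphic extension via uniqueness). Your scruple about whether every $z_0\in\mathbb{F}(k^2+\i\epsilon)$ is a \emph{genuine} (non-removable) pole for a given source $f$ is a legitimate point that the paper simply glosses over; your proposed resolution---reading the assertion at the level of the operator-valued resolvent $(I-\K_z)^{-1}$ rather than for a fixed $f$---is the natural one and is consistent with how the paper uses the result downstream.
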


\section{Limiting absorption principle: $\epsilon=0$}

In this section, we consider the scattering problem \eqref{eq:wg1}-\eqref{eq:wg2} with  positive wavenumber $k$ \high{satisfies Assumption \ref{asp1}} and the limiting absorption principle is adopted. We begin with the properties of the damped Helmholtz equation \eqref{eq:wg_z1}.

\subsection{Distribution of poles of the damped Helmholtz equations}

\high{Recall that both $S_+^0$ and $S_-^0$ are finite with the same number of elements and both $RS$ and $LS$ are countable. From Assumption \ref{asp1}, $S_+^0\cap S_-^0=\emptyset$, they are  written as}
\begin{eqnarray*}
&S_+^0=\Big\{z_1^+,\dots,z_N^+\Big\};\quad RS=\Big\{z_{N+1}^+,z_{N+2}^+,\dots\Big\};\\
&S_-^0=\Big\{z_1^-,\dots,z_N^-\Big\};\quad LS=\Big\{z_{N+1}^-,z_{N+2}^-,\dots\Big\};
\end{eqnarray*}
where $|z_j^+|=|z_j^-|=1$ for $j=1,2,\dots,N$ and $|z_j^+|<1<|z_j^-|$ for $j=N+1,N+2,\dots$.
\high{Moreover}, $\mathbb{F}=S_+^0\cup S_-^0\cup RS\cup LS$. Note that when $k^2\notin\sigma(A)$, both $S_+^0$ and $S_-^0$ are empty. In this case, we set $N=0$.

From the \high{last section, elements in $\mathbb{F}(k^2+\i\epsilon)$ depend continuously on $\epsilon$. As the poles $z_j^\pm$ ($j\in\N$) are exactly all the elements in $\mathbb{F}(k^2+\i\epsilon)$, they depend continuously on $\epsilon>0$ as well. Thus}  for any $z_j^\pm\in\mathbb{F}$ with $j\in\N$, there is a continuous function $Z_j^\pm(\epsilon)$, such that $\{Z_j^+(\epsilon),Z_j^-(\epsilon):\,j\in\N\}$ is exactly the set of all poles with respect to $k^2+\i\epsilon$. Moreover,  $\lim_{\epsilon\rightarrow 0}Z^\pm_j(\epsilon)=Z^\pm_j(0)=z_j^\pm$. For simplicity, we define the following sets
\begin{eqnarray*}
&S_+^0(\epsilon)=\{Z_1^+(\epsilon),\dots,Z_N^+(\epsilon)\};\quad RS(\epsilon)=\{Z_{N+1}^+(\epsilon),Z_{N+2}^+(\epsilon),\dots\};\\
&S_-^0(\epsilon)=\{Z_1^-(\epsilon),\dots,Z_N^-(\epsilon)\};\quad LS(\epsilon)=\{Z_{N+1}^-(\epsilon),Z_{N+2}^-(\epsilon),\dots\}.
\end{eqnarray*}
For simplicity, let
\begin{equation*}
S_+(\epsilon):=S_+^0(\epsilon)\cup RS(\epsilon),\quad S_-(\epsilon)=S_-^0(\epsilon)\cup LS(\epsilon).
\end{equation*}
The following lemma shows the asymptotic behavior of $\left|Z_j^\pm(\epsilon)\right|$ as $\epsilon\rightarrow 0$ where $j=1,2,\dots,N$, i.e., $Z_j^\pm(\epsilon)\in\S_\pm^0(\epsilon)$, for the proof we refer to Appendix in \cite{Joly2006}.

\begin{lemma}\label{th:curve_z}
For any $j=1,2,\dots,N$, when $\epsilon>0$ is small enough, the functions satisfy $|Z_j^+(\epsilon)|<1$ and $|Z_j^-(\epsilon)|>1$. 
\end{lemma}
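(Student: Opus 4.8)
The plan is to track the unit Floquet multipliers $z_j^\pm$ as they perturb into the interior or exterior of the unit disk under the absorbing perturbation $k^2 \mapsto k^2 + \mathrm{i}\epsilon$, and to identify the sign of the first-order displacement of $|Z_j^\pm(\epsilon)|$ with the sign of the group velocity $\mu_n'(\alpha)$. First I would fix $j \in \{1,\dots,N\}$ and write $z_j^+ = \exp(\mathrm{i}\alpha_j)$ with $\alpha_j \in P_+$, so that $\mu_n(\alpha_j) = k^2$ for the associated dispersion curve and $\mu_n'(\alpha_j) > 0$. Since $\mathbb{F}$ is discrete (Property b)) and $\mu_n' $ does not vanish at $\alpha_j$ (Assumption~\ref{asp1}), the implicit function theorem applied to the equation $\mu_n(\alpha) = k^2 + \mathrm{i}\epsilon$ — using the analytic extension of $\mu_n$ to a complex neighbourhood of $(-\pi,\pi]$ mentioned before the Example — yields a unique analytic solution $\alpha_j(\epsilon)$ with $\alpha_j(0) = \alpha_j$ and $\alpha_j'(0) = \mathrm{i}/\mu_n'(\alpha_j)$, so that $Z_j^+(\epsilon) = \exp(\mathrm{i}\alpha_j(\epsilon))$ is the continuation of $z_j^+$ among the Floquet multipliers.

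The heart of the argument is then a short computation: $|Z_j^+(\epsilon)|^2 = \exp(-2\,\Im \alpha_j(\epsilon))$, and $\Im \alpha_j(\epsilon) = \Im\big(\alpha_j + \epsilon\,\alpha_j'(0) + O(\epsilon^2)\big) = \epsilon/\mu_n'(\alpha_j) + O(\epsilon^2)$, since $\alpha_j$ is real and $\alpha_j'(0) = \mathrm{i}/\mu_n'(\alpha_j)$. Hence
\begin{equation*}
|Z_j^+(\epsilon)| = \exp\!\Big(-\tfrac{\epsilon}{\mu_n'(\alpha_j)} + O(\epsilon^2)\Big),
\end{equation*}
which is strictly less than $1$ for all sufficiently small $\epsilon > 0$ because $\mu_n'(\alpha_j) > 0$. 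The same reasoning with $\alpha_j \in P_-$, where $\mu_n'(\alpha_j) < 0$, gives $|Z_j^-(\epsilon)| = \exp(-\epsilon/\mu_n'(\alpha_j) + O(\epsilon^2)) > 1$ for small $\epsilon > 0$. Because there are only finitely many indices $j \in \{1,\dots,N\}$, one can choose a single $\epsilon_0 > 0$ below which all these inequalities hold simultaneously, which is exactly the claim.

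The main obstacle is bookkeeping rather than analysis: one must be sure that the branch $Z_j^\pm(\epsilon)$ produced by the implicit function theorem on the dispersion relation is genuinely the continuous branch $Z_j^\pm(\epsilon)$ of Floquet multipliers fixed in the paragraph preceding the lemma, i.e.\ that the labelling by the continuous functions $Z_j^\pm$ agrees with the labelling by dispersion curves. This is where the remark after the intersection discussion is used: when several dispersion curves meet at $(\alpha,k^2)$ the multiplier $z = \exp(\mathrm{i}\alpha)$ is counted with multiplicity, each copy attached to its own curve $\mu_{m_\ell}$, and since each $\mu_{m_\ell}'$ is individually nonzero under Assumption~\ref{asp1}, the implicit function theorem still applies curve-by-curve and the perturbed multipliers split apart according to the signs of the $\mu_{m_\ell}'$. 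One then invokes the continuity of $\mathbb{F}(k^2+\mathrm{i}\epsilon)$ in $\epsilon$ (Property d)) together with discreteness of $\mathbb{F}(k^2)$ to conclude that these locally-constructed branches exhaust, near each $z_j^\pm$, all nearby Floquet multipliers, so they must coincide with the $Z_j^\pm(\epsilon)$ of the statement. As noted, the detailed perturbation estimate is carried out in the Appendix of \cite{Joly2006}, so I would cite it for the routine parts and emphasise only the sign analysis above.
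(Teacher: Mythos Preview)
Your argument is correct and is precisely the standard perturbation-of-dispersion-curves computation: the paper itself does not give a proof of this lemma but refers entirely to the Appendix of \cite{Joly2006}, and what you have written is essentially that argument. Your handling of the multiplicity and branch-matching issues is also appropriate, so there is nothing to add.
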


%

From this lemma, we know the behavior of the curves $Z_j^\pm(\epsilon)$ for small enough $\epsilon$:
\begin{itemize}
\item for any $z_j^+\in S^0_+$, the points $Z_j^+(\epsilon)$ converges to $z_j^+$ from the inside of the unit circle;
\item for any $z_j^-\in S^0_-$, the points $Z_j^-(\epsilon)$ converges to $z_j^-$ from the outside of the unit circle.
\end{itemize}
To make it clear, we show a visualization of the curves in Figure \ref{fig:curve}. The red rectangles are points in $S_-^0$ and the blue diamonds are points in $S_+^0$. When $\epsilon>0$, we can see that the curve $Z_j^-(\epsilon)$ converges to $z_j^-$ from the exterior of the unit circle, and $Z_j^+(\epsilon)$ converges to $z_j^+$ from the interior of the unit circle.

\begin{figure}[ht]
\centering
\begin{tabular}{c  c}
\includegraphics[width=0.45\textwidth]{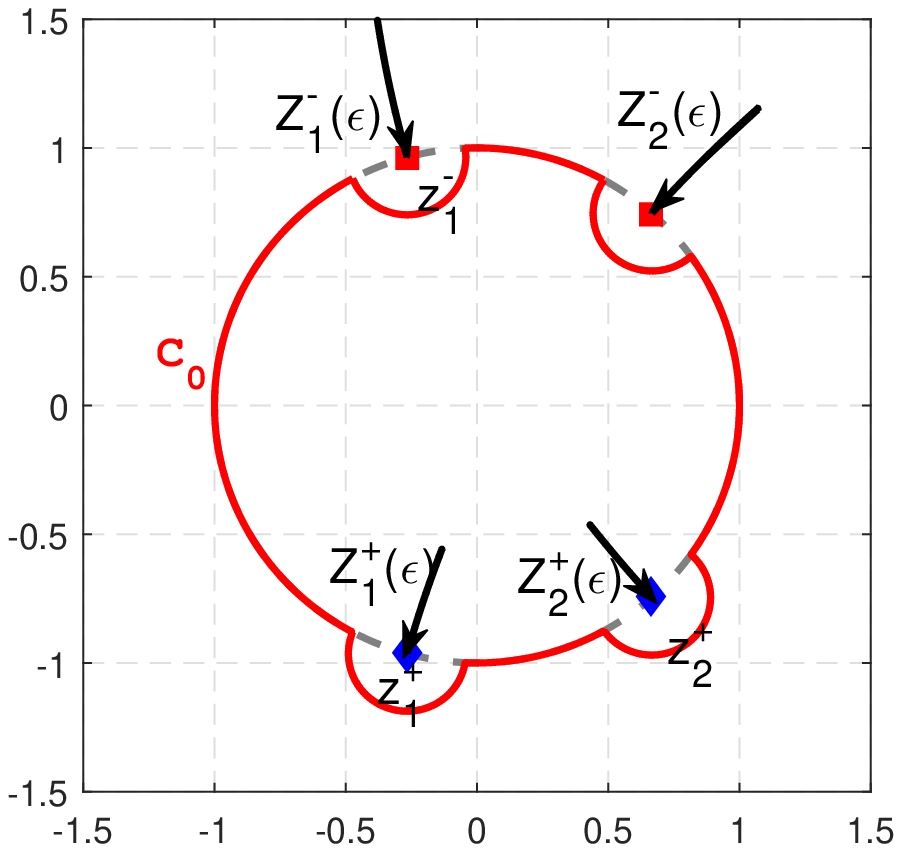} 
& \includegraphics[width=0.45\textwidth]{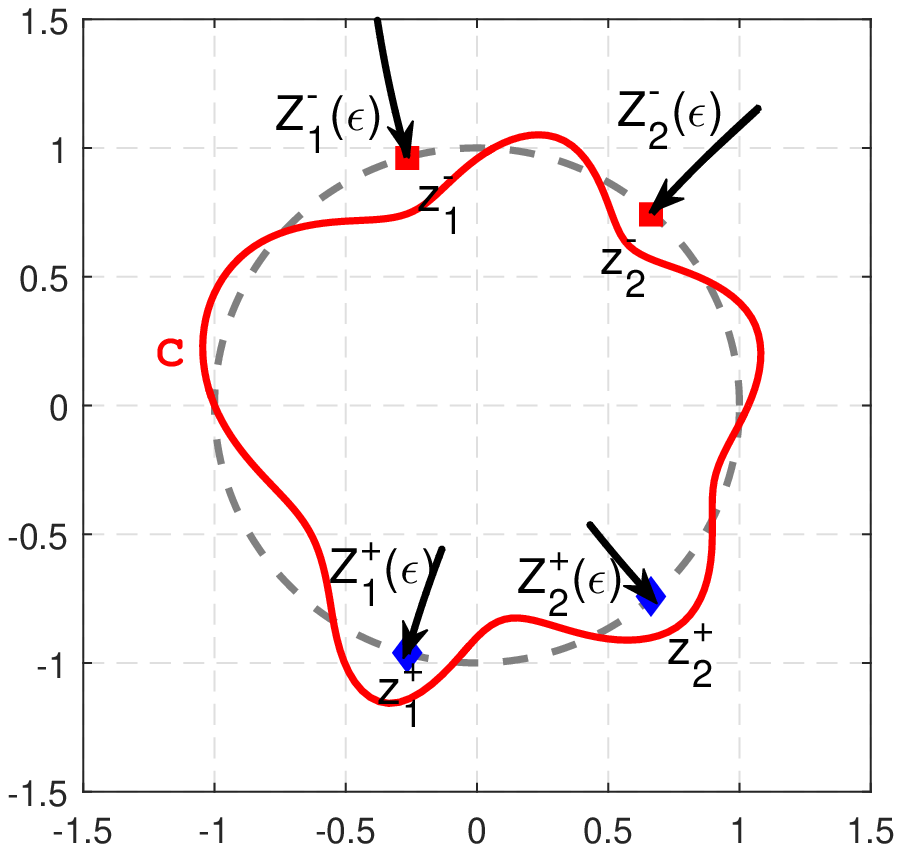}\\[-0cm]
\end{tabular}
\caption{Left: the curve $\mathcal{C}_0$; Right: a choice of $\mathcal{C}$. Black curves: $Z_j^\pm(\epsilon)$.  The red rectangles are points in $S^0_-$ and the blue diamonds are points in $S^0_+$. The arrows show the asymptotic behavior of the poles when $\epsilon\rightarrow 0$.}
\label{fig:curve}
\end{figure}

For the points in $RS(\epsilon)$ or $LS(\epsilon)$, we  also estimate their distributions for small enough $\epsilon>0$ in the following lemma.

\begin{lemma}\label{th:curve_z_inside}
Suppose for some $\tau>0$, $RS\subset B(0,\exp(-\tau))$  and $LS\subset\C\setminus\overline{B(0,\exp(\tau))}$. 
When $\epsilon>0$ is small enough, $Z_j^+(\epsilon)\in B(0,\exp(-\tau_1))$ and $Z_j^-(\epsilon)\in \C\setminus\overline{B(0,\exp(\tau_1))}$ for any $j\geq N+1$, where $\tau_1$ takes any fixed value in $(0,\tau)$.
\end{lemma}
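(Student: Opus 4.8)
The plan is to exploit the continuity of the Floquet multipliers in $\epsilon$ (Property d) together with the uniform separation of $\mathbb{F}$ from the annular region established in Lemma \ref{th:s_pm_dist}, mimicking the compactness argument used there. First I would fix $\tau_1\in(0,\tau)$ and consider the closed annulus $\overline{A_{\tau,\tau_1}}:=\{z\in\C:\exp(-\tau)\le|z|\le\exp(\tau),\ |z|\notin(\exp(-\tau_1),\exp(\tau_1))\}$ — more precisely the compact set $K:=\{z:\exp(-\tau_1)\ge|z|\ge\exp(-\tau)\}\cup\{z:\exp(\tau_1)\le|z|\le\exp(\tau)\}$, a pair of closed sub-annuli sandwiched between the two circles. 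By hypothesis $K\cap\mathbb{F}(k^2)=\emptyset$ (the hypothesis $RS\subset B(0,\exp(-\tau))$, $LS\subset\C\setminus\overline{B(0,\exp(\tau))}$ says exactly that no pole lies in the annulus $\exp(-\tau)<|z|<\exp(\tau)$ except possibly on the unit circle, and the unit Floquet multipliers lie on $|z|=1$, which is not in $K$). Since $\mathbb{F}(k^2)=\S(k^2)$ is the discrete pole set of the analytic-Fredholm family $(I-\K_z(k^2))^{-1}$ and this family depends jointly continuously on $(z,k^2)$, for every point $z_0\in K$ there is a ball $B(z_0,\delta_{z_0})$ and an $\eta_{z_0}>0$ such that $(I-\K_z(k^2+\i\epsilon))^{-1}$ exists for all $z\in B(z_0,\delta_{z_0})$ and all $|\epsilon|<\eta_{z_0}$; this is exactly the local stability of invertibility under small perturbation, as in Theorem \ref{th:solution_z_uc} / the analytic Fredholm theory.

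Next I would extract a finite subcover: since $K$ is compact, finitely many such balls $B(z_0^{(1)},\delta_1),\dots,B(z_0^{(m)},\delta_m)$ cover $K$, and setting $\eta:=\min_{i}\eta_{z_0^{(i)}}>0$ we get that for every $\epsilon$ with $0<\epsilon<\eta$ the operator $I-\K_z(k^2+\i\epsilon)$ is invertible for all $z\in K$; equivalently $\mathbb{F}(k^2+\i\epsilon)\cap K=\emptyset$. By definition the poles with respect to $k^2+\i\epsilon$ are exactly $\{Z_j^\pm(\epsilon):j\in\N\}$, so in particular $Z_j^+(\epsilon)\notin K$ and $Z_j^-(\epsilon)\notin K$ for all $j$ once $0<\epsilon<\eta$. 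Finally I would pin down which component of $\C\setminus K$ each pole falls in. For $j\ge N+1$ we know $Z_j^+(0)=z_j^+\in RS\subset B(0,\exp(-\tau))$ and $Z_j^-(0)=z_j^-\in LS\subset\C\setminus\overline{B(0,\exp(\tau))}$; by continuity of $Z_j^\pm$ at $\epsilon=0$, shrinking $\eta$ further if necessary so that $Z_j^+(\epsilon)\in B(0,\exp(-\tau))$ and $Z_j^-(\epsilon)\in\C\setminus\overline{B(0,\exp(\tau))}$ (this is immediate for each individual $j$), and combining with the fact that $Z_j^+(\epsilon)$ cannot lie in the sub-annulus $\exp(-\tau)\le|z|\le\exp(-\tau_1)$ (part of $K$), we conclude $|Z_j^+(\epsilon)|<\exp(-\tau_1)$; symmetrically $|Z_j^-(\epsilon)|>\exp(\tau_1)$.

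The one subtlety worth flagging is that the last step, as phrased, handles "each $j$ individually," whereas the statement is uniform in $j\ge N+1$. The uniformity is the real content and it comes entirely from the finite subcover argument applied to the full compact set $K$: the single threshold $\eta$ works simultaneously for all poles because $\mathbb{F}(k^2+\i\epsilon)\cap K=\emptyset$ is a statement about the whole pole set at once, not pole-by-pole. So the only place I must be careful is that I do not need to know in advance that $Z_j^\pm(\epsilon)$ stays close to $z_j^\pm$ for large $j$ uniformly; rather, I use that $\{Z_j^\pm(\epsilon)\}_{j}$ is precisely $\mathbb{F}(k^2+\i\epsilon)$, which as a set avoids $K$, and then the topological fact that $K$ separates $B(0,\exp(-\tau_1))$ from the complement of $\overline{B(0,\exp(\tau_1))}$ within the region $\{|z|<\exp(\tau)\}\cup\{|z|>\exp(-\tau)\}$ forces each pole onto the correct side once we know (from $\epsilon=0$ and continuity, for the finitely many "boundary-crossing" indices, here trivially none since $RS,LS$ are already strictly inside/outside) which side it starts on.

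The step I expect to be the main obstacle is making the joint-continuity-plus-compactness argument fully rigorous, i.e., invoking Property d) (continuity of $\mathbb{F}(k^2)$ in $k^2$, via Theorem 3 in \cite{Stein1968}) in the precise form "$\mathbb{F}(k^2+\i\epsilon)\to\mathbb{F}(k^2)$ in a sense strong enough that a fixed compact set disjoint from $\mathbb{F}(k^2)$ stays disjoint from $\mathbb{F}(k^2+\i\epsilon)$ for small $\epsilon$." This is exactly upper semicontinuity of the pole set, which does follow from the analytic Fredholm framework plus the joint continuity of $(z,k^2)\mapsto\K_z(k^2)$, but one must state it carefully rather than appeal to an unqualified "continuous dependence."
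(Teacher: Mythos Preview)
Your approach is correct and matches the paper's, which simply says the proof is ``similar to that of Lemma~\ref{th:s_pm_dist}'' (perturbation theory plus a Heine--Borel compactness argument on a compact set disjoint from $\mathbb{F}(k^2)$). Your added care about the uniformity in $j$---handling it via the set-level statement $\mathbb{F}(k^2+\i\epsilon)\cap K=\emptyset$ together with the connectedness of the continuous paths $\epsilon\mapsto Z_j^\pm(\epsilon)$ in $\C\setminus K$---is exactly the right way to make the omitted argument precise.
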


\high{The proof is similar to that of Lemma \ref{th:s_pm_dist}, thus is omitted here.}

From Lemma \ref{th:curve_z} and \ref{th:curve_z_inside}, for $\epsilon>0$ small enough, the sets $S_\pm^0(\epsilon)$ and $RS(\epsilon)$, $RS(\epsilon)$ have the following properties:
\begin{equation}\label{eq:dist_pole}
 S_+(\epsilon)=S_+^0(\epsilon)\cup RS(\epsilon)\subset B(0,1);\quad S_-(\epsilon)=S_-^0(\epsilon)\cup LS(\epsilon)\subset \C\setminus\overline{B(0,1)}.
\end{equation}
Moreover, any point in $S_+^0(\epsilon)$ approaches the unit circle from the interior of $B(0,1)$, and any point in $S_-^0(\epsilon)$ from the exterior of $B(0,1)$. There is a $\tau>0$ such that when $\epsilon$ is small enough, all the points in $RS(\epsilon)$ lie uniformly in the ball $B(0,\exp(-\tau))$, while the points in $LS(\epsilon)$ lie uniformly in the exterior of the ball $B(0,\exp(\tau))$.

\subsection{Limiting absorption process}

In this subsection, we consider the limit of $u_\epsilon$ when $\epsilon\rightarrow 0^+$.  From Lemma \ref{th:curve_z_inside}, we can always find a $\tau>0$ such that the following conditions are satisfied:
\begin{itemize}
 \item No poles of $(I-\K_z)^{-1}$ lies on $\partial B(0,\exp(-\tau))$;
 \item for $\epsilon>0$ small enough, $Z_j^+(\epsilon)\subset B(0,\exp(-\tau))$ for any $j\geq N+1$;
 \item for $\epsilon>0$ small enough, $Z_j^+(\epsilon)\subset B(0,1)\setminus\overline{B(0,\exp(-\tau))}$ for any $j=1,2,\dots,N$.
\end{itemize}

As $w_\epsilon$ depends analytically on $z$ in $\C$ except for $\{Z_j^\pm(\epsilon)\}\cup\{0\}$, from \eqref{eq:solution_eps} and Cauchy integral theorem, for $n\geq 1$,
\begin{equation*}
 u_\epsilon(x_1+n,x_2)=\frac{1}{2\pi\i}\oint_{|z|=\exp(-\tau)}w_\epsilon(z,x)z^{n-1}\d z+\sum_{j=1}^N\left[\frac{1}{2\pi\i}\oint_{\left|z-Z_j^+(\epsilon)\right|=\tau_j}w_\epsilon(z,x)z^{n-1}\d z\right].
\end{equation*}
\high{As $\mathbb{F}(k^2+\i\epsilon)$ is a discrete set,   for any small enough $\epsilon>0$, $\tau_j=\tau_j(\epsilon)>0$ is small enough such that for $j=1,2,\dots,N$, $B(Z_j^+(\epsilon),\tau_j)\subset B(0,1)\setminus\overline{B(0,\exp(-\tau))}$ and the intersection of every two balls is empty. For the integral curve we refer to Figure \ref{fig:cc}.} Note that we only discuss the case when $n\geq 1$, as everything is similar for $n\leq -1$. 
\begin{figure}[ht]
\centering
\includegraphics[width=0.45\textwidth]{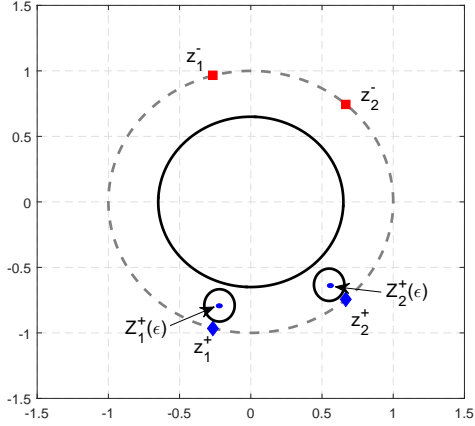} 
\caption{\high{Integral curve: the large black solid circle is $|z|=\exp(-\tau)$, the small black solid circles are $\left|z-Z_j^+(\epsilon)\right|=\tau_j$.}}
\label{fig:cc}
\end{figure}

Define the following functions 
\begin{eqnarray*}
 && u_\epsilon^0(x_1+n,x_2):=\frac{1}{2\pi\i}\oint_{|z|=\exp(-\tau)}w_\epsilon(z,x)z^{n-1}\d z;\\
 && u_\epsilon^j(x_1+n,x_2):=\frac{1}{2\pi\i}\oint_{|z-Z_j^+(\epsilon)|=\tau_j}w_\epsilon(z,x)z^{n-1}\d z,\quad j=1,2,\dots,N.
\end{eqnarray*}

First consider the function $u_\epsilon^0$. As no poles lie on $\partial B(0,\exp(-\tau))$, from the perturbation theory and Heine-Borel theorem, $w_\epsilon(z,x)$ converges to $w(z,x)$ uniformly with respect to $z$, when $\epsilon\rightarrow 0$. Define
\begin{equation}\label{eq:u_0}
 u_0(x_1+n,x_2):=\frac{1}{2\pi\i}\oint_{|z|=\exp(-\tau)}w(z,x)z^{n-1}\d z,
\end{equation}
Then it is easy to prove that
\begin{equation*}
 u_\epsilon^0(x_1+n,x_2)\rightarrow u_0(x_1+n,x_2)\quad\text{ in }H^1(\Omega_0)
\end{equation*}
as $\epsilon\rightarrow 0^+$, for any fixed $n\in\N$.

For the second term, we use the spectrum decomposition of the operator $\A_z$. As was described in Section 3, the differential operator $\A_z$ has countable number of eigenvalues $\{\mu_j(\alpha):\,j\in\N\}$ and corresponding eigenfunctions $\{\psi_j(\alpha,\cdot):\,j\in\N\}$, where $z=\exp(\i\alpha)$. We rearrange the order of the eigenvalues such that
\[\mu_j^+(\alpha_j^+)=\mu_j^-(\alpha_j^-)=k^2,\quad \text{such that }z_j^\pm=\exp(\i\alpha_j^\pm)\text{ for }j=1,2,\dots,N.\]
The corresponding eigenfunction is denoted by $\psi_j^\pm(\alpha,\,\dot)$. The rest of the eigenvalues and eigenfunctions are simply denoted by $\mu_j(\alpha)$ and $\psi_j(\alpha,\cdot)$ for $j=N+1,N+2,\dots$. 
From the definition of $S_+^0$ and $S_-^0$,
\[(\mu_j^+)'(\alpha_j^+)>0,\quad (\mu_j^-)'(\alpha_j^-)<0.\]
Recall the resolvent of the operator $k^2_\epsilon I-\A_z$, then the solution $w_\epsilon(z,x)$ has the following form:
\begin{equation*}
\begin{aligned}
 w_\epsilon(z,x)&=\sum_{j=1}^N\frac{\left<q^{-1}z^{n-1}f,\psi_j^+(-\i\log(z),\cdot)\right>}{\mu_j^+(-\i\log(z))-k^2-\i\epsilon}\psi_j^+(-\i\log(z),x)\\
 &+\sum_{j=1}^N\frac{\left<q^{-1}z^{n-1}f,\psi_j^-(-\i\log(z),\cdot)\right>}{\mu_j^-(-\i\log(z))-k^2-\i\epsilon}\psi_j^-(-\i\log(z),x))\\
 &+\sum_{j=N+1}^\infty\frac{\left<q^{-1}z^{n-1}f,\psi_j(-\i\log(z),\cdot)\right>}{\mu_j(-\i\log(z))-k^2-\i\epsilon}\psi_j(-\i\log(z),x).
 \end{aligned}
\end{equation*}
As for $j=N+1,\dots$, the function $\mu_j(\alpha)$ with $\alpha\in(-\pi,\pi]$ is far away from $k^2$, the third term has no poles in the neighborhood of $\UC$ when $\epsilon>0$ is small enough, thus we only need to consider the first two terms. For any $j=1,2,\dots,N$, $\mu_j^\pm(\alpha)=k^2$ has the solution $\alpha_j^\pm\in (-\pi,\pi]$.  
From Lemma \ref{th:curve_z}, when $\epsilon>0$ is small enough,  the solution of $\mu_j^\pm(-\i\log(z))=k^2+\i\epsilon$ is $Z_j^\pm(\epsilon)$ with $|Z_j^+(\epsilon)|<1$ and $|Z_j^-(\epsilon)|>1$. Thus the second term has no poles in $B(0,1)$, we only need to consider the first term. As the derivative $(\mu^+_j)'(\alpha(0))>0$, for small enough $\epsilon>0$, \high{$(\mu^+_j)'(\alpha(\epsilon))\neq 0$}. This implies that $\frac{\d}{\d z}\mu_j(-\i\log(z))\Big|_{z=Z_j^+(\epsilon)}\neq 0$, so $Z_j^+(\epsilon)$ is a pole of order one. From Residue theorem,
\begin{equation*}
 u_\epsilon^j(x_1+n,x_2)=\frac{\i \left(Z_j^+(\epsilon)\right)^n\left<q^{-1}f,\psi_j^+(-\i\log(Z_j^+(\epsilon)),\cdot)\right>}{(\mu_j^+)'(-\i \log(Z_j^+(\epsilon)))}\psi_j^+(-\i\log(Z_j^+(\epsilon)),x).
\end{equation*}
As $\psi_j^\epsilon$ and $\mu_j^\epsilon$ converges when $\epsilon\rightarrow 0^+$, let
\begin{equation*}
 u_j(x_1+n,x_2)=\frac{\i \left(z_j^+\right)^n\left<q^{-1}f,\psi_j^+(-\i\log(z_j^+),\cdot)\right>}{(\mu_j^+)'(-\i \log(z_j^+))}\psi_j^+(-\i\log(z_j^+),x),
\end{equation*}
then $u_\epsilon^j\rightarrow u_j$ as $\epsilon\rightarrow 0^+$.

From the arguments above, the LAP solution has the following representation \high{for $n\geq 1$:}
\begin{equation}\label{eq:LAP}
\begin{aligned}
 u(x_1+n,x_2)&=\frac{1}{2\pi\i}\oint_{|z|=\exp(-\tau)}w(z,x)z^{n-1}\d z\\
 &+\sum_{j=1}^N\left[\frac{\i \left(z_j^+\right)^n\left<q^{-1}f,\psi_j^+(-\i\log(z_j^+),\cdot)\right>}{(\mu_j^+)'(-\i \log(z_j^+))}\psi_j^+(-\i\log(z_j^+),x)\right].
 \end{aligned}
\end{equation}
Thus the LAP solution in $\cup_{j=1}^\infty\Omega_j$ is composed of finite number of Bloch wave solutions that are propagating to the right and a contour integral defined as \eqref{eq:u_0}. The solution in $\cup_{j=-\infty}^{-1}\Omega_j$ can be obtained in the same way. \high{For $n\leq -1$:
\begin{equation}\label{eq:LAP_neg}
\begin{aligned}
 u(x_1+n,x_2)&=\frac{1}{2\pi\i}\oint_{|z|=\exp(\tau)}w(z,x)z^{n-1}\d z\\
 &+\sum_{j=1}^N\left[\frac{\i \left(z_j^-\right)^n\left<q^{-1}f,\psi_j^-(-\i\log(z_j^-),\cdot)\right>}{(\mu_j^-)'(-\i \log(z_j^-))}\psi_j^-(-\i\log(z_j^-),x)\right].
 \end{aligned}
\end{equation}
}

The representation of LAP solutions and the stability result are concluded in the following theorem.
\begin{theorem}\label{th:stab}
 For any $f\in L^2(\Omega_0)$, there is a unique LAP solution $u$ with the form \eqref{eq:LAP} \high{satisfying}
 \begin{equation*}
  \|u\|_{H^1(\Omega_n)}\leq \high{C} \|f\|_{L^2(\Omega_0)},
 \end{equation*}
 where \high{$C$ is a constant that does not depend on $n$}.

\end{theorem}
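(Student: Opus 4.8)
The plan is to establish existence, uniqueness, and the stability bound separately, relying on the decomposition \eqref{eq:LAP}--\eqref{eq:LAP_neg} already derived in this section. First I would observe that uniqueness of the LAP solution is essentially definitional: the LAP solution is the $H^1_{loc}$-limit of the family $u_\epsilon$, and such a limit, if it exists, is unique; the content of the preceding subsection is precisely that this limit does exist and is given by \eqref{eq:LAP} for $n\geq 1$ and \eqref{eq:LAP_neg} for $n\leq -1$, with the central cell $\Omega_0$ handled by the contour integral over $|z|=\exp(-\tau)$ directly (or by noting $u_0$ is well-defined there). So the existence part reduces to collecting the convergence statements $u_\epsilon^0\to u_0$ and $u_\epsilon^j\to u_j$ already proved above.

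The substantive part is the stability estimate $\|u\|_{H^1(\Omega_n)}\leq C\|f\|_{L^2(\Omega_0)}$ with $C$ independent of $n$. I would bound the two pieces of \eqref{eq:LAP} separately. For the finite sum over $j=1,\dots,N$: each term has the form $c_j (z_j^+)^n \psi_j^+(-\i\log z_j^+,\cdot)$ where $|z_j^+|=1$, so $|(z_j^+)^n|=1$ for all $n$; the coefficient $c_j$ is bounded by $|\langle q^{-1}f,\psi_j^+\rangle|/|(\mu_j^+)'(\alpha_j^+)|\leq C\|f\|_{L^2(\Omega_0)}$ since $(\mu_j^+)'(\alpha_j^+)\neq 0$ by Assumption \ref{asp1} (this is exactly where $P_0=\emptyset$ is used), and $\|\psi_j^+\|_{H^1(\Omega_0)}$ is a fixed finite number; summing $N$ such terms gives a bound uniform in $n$. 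For the contour-integral term $\frac{1}{2\pi\i}\oint_{|z|=\exp(-\tau)}w(z,x)z^{n-1}\d z$: parametrize $z=\exp(-\tau)e^{\i\theta}$, so $|z^{n-1}|=\exp(-\tau(n-1))\leq \exp(\tau)$ for $n\geq 1$, and the remaining factor $\oint \|w(z,\cdot)\|_{H^1(\Omega_0)}|dz|$ is controlled using that $\sup_{|z|=\exp(-\tau)}\|w(z,\cdot)\|_{H^1(\Omega_0)}\leq C\|f\|_{L^2(\Omega_0)}$, which follows from $w(z,\cdot)=(I-\B_z)^{-1}\widetilde f$ together with Theorem \ref{th:z-eq}: on the compact curve $|z|=\exp(-\tau)$, which by the choice of $\tau$ avoids $\mathbb{F}$, the operator norm of $(I-\K_z)^{-1}$ is bounded, hence so is that of $(I-\B_z)^{-1}=\zeta_z(I-\K_z)^{-1}\zeta_z^{-1}$ (with $\zeta_z$ uniformly bounded on this curve since $|z|$ is fixed), and $\|\widetilde f\|\leq C\|f\|_{L^2(\Omega_0)}$ by \eqref{eq:tilde_f}. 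Crucially, for $n\geq 1$ the factor $z^{n-1}$ is decaying (not growing) on $|z|=\exp(-\tau)$, so the bound is uniform in $n$; the symmetric argument on $|z|=\exp(\tau)$ with $|z^{n-1}|=\exp(\tau(n-1))$ fails to be uniform, which is why \eqref{eq:LAP_neg} must instead use the contour $|z|=\exp(\tau)$ with $n\leq -1$ giving $|z^{n-1}|=\exp(\tau(n-1))\leq 1$ — so again decaying. For $\Omega_0$ itself one uses the $|z|=\exp(-\tau)$ representation with $n=0$, contributing a bounded constant.

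The main obstacle I anticipate is making the uniform-in-$n$ bound on the contour-integral term fully rigorous, specifically the uniform operator-norm bound $\sup_{|z|=\exp(-\tau)}\|(I-\K_z)^{-1}\|<\infty$. This requires knowing that no pole of $(I-\K_z)^{-1}$ lies on the circle $|z|=\exp(-\tau)$ — which is guaranteed by the first bullet in the limiting absorption subsection ("No poles of $(I-\K_z)^{-1}$ lies on $\partial B(0,\exp(-\tau))$") — combined with continuity of $z\mapsto(I-\K_z)^{-1}$ in operator norm on this compact set (from the analytic, hence norm-continuous, dependence asserted in Theorem \ref{th:z-eq}) and then a compactness argument. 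A secondary technical point is the interchange of limit $\epsilon\to 0$ with the contour integral defining $u_0$, which needs the uniform convergence $w_\epsilon(z,\cdot)\to w(z,\cdot)$ in $H^1(\Omega_0)$ on $|z|=\exp(-\tau)$ — already stated in the text via perturbation theory and Heine--Borel — so this is available. I would close by assembling: $\|u\|_{H^1(\Omega_n)}\leq \|{\rm contour\ term}\|_{H^1(\Omega_0)}+\sum_{j=1}^N\|u_j\|_{H^1(\Omega_0)}\leq C_1\|f\|_{L^2(\Omega_0)}+C_2\|f\|_{L^2(\Omega_0)}=C\|f\|_{L^2(\Omega_0)}$ with $C=C_1+C_2$ independent of $n$, and noting the $n\leq -1$ case is identical using \eqref{eq:LAP_neg}.
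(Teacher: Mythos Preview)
Your proposal is correct and follows essentially the same approach as the paper's proof, which is a two-sentence sketch invoking exactly the two facts you spell out: uniform boundedness of $\|w(z,\cdot)\|_{H^1(\Omega_0)}$ on the pole-free circle $|z|=\exp(-\tau)$ (so the contour term is controlled, with $|z^{n-1}|\leq 1$ for $n\geq 1$), and $|z_j^+|=1$ for the propagating modes. Your write-up simply makes explicit the ingredients the paper leaves implicit---the role of Assumption~\ref{asp1} in keeping $(\mu_j^+)'(\alpha_j^+)\neq 0$, the compactness argument for the uniform operator bound via Theorem~\ref{th:z-eq}, and the symmetric treatment of $n\leq -1$ via \eqref{eq:LAP_neg}.
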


\high{
\begin{proof}
For $n\geq 1$, the result comes from the fact that the norm of $w(z,\cdot)$ is uniformly bounded with respect to $z$ on the circle $|z|=\exp(-\tau)$, and $|z_j^+|=1$. For $n\leq -1$, the proof is similar thus is omitted. 
\end{proof}
}

\begin{remark}\label{rem}
 Actually, the condition $f\in L^2(\Omega_0)$ could be replaced by $f\in \high{\left(H^1_z(\Omega_0)\right)^*}$. From similar arguments, we can still prove $u\in H^1_{loc}(\Omega)$. The only difference is that $\Delta u\in L^2_{loc}(\Omega)$ no longer holds.
\end{remark}

\section{The space  of LAP solutions}

In this section, we introduce some properties of the following set:
  \begin{equation*}
  \mathcal{U}:=\left\{{u}(f)\Big|_{\Omega_0}:\,{u}(f)\in H^1_{\loc}(\Omega)\text{ is the LAP solution of \eqref{eq:wg1}-\eqref{eq:wg2} with source }f\in L^2(\Omega_0)\right\}.
 \end{equation*}
\high{ For simplicity, we define an auxiliary set:
  \begin{equation*}
  \widetilde{\mathcal{U}}:=\left\{{u}(f)\Big|_{\Omega_0}:\,{u}(f)\in H^1_{\loc}(\Omega)\text{ is the LAP solution of \eqref{eq:wg1}-\eqref{eq:wg2} with source }f\in \high{\left(H^1_z(\Omega_0)\right)^*}\right\}.
 \end{equation*}

  As was explained in Remark \ref{rem}, the result in Theorem \ref{th:stab} is easily extended to cases where $f\in \left(H^1_z(\Omega_0)\right)^*$. Thus $\mathcal{U}\subset\widetilde{\mathcal{U}}\subset H^1(\Omega_0)$. 
For a better explanation, we introduce the following Hilbert triple (see Section 5.2, \cite{Brezis2011}):
\[
H^1(\Omega_0)\subset L^2(\Omega_0)\subset\left(H^1(\Omega_0)\right)^*.
\] 
As $H^1(\Omega_0)$ is dense in $L^2(\Omega_0)$ and $H^1(\Omega_0)$ is reflexive, $L^2(\Omega_0)$ is also dense in $\left(H^1(\Omega_0)\right)^*$.
 }
 
 To guarantee that the LAP holds, we still assume that $k^2$ satisfies Assumption \ref{asp1}. 
 Thus $\mathcal{U}$ is a linear subspace of $H^1(\Omega_0)$. 
To study the properties of $\mathcal{U}$, we introduce the following integrals first. For any $f,\,g\in \high{\left(H^1(\Omega_0)\right)^*}$, define the following functionals:
 \begin{eqnarray*}
  &&I_1(f,g):=\int_{\Gamma_1}\left[\frac{\partial {u}(g)}{\partial\, x_1}{u}(f)-{u}(g)\frac{\partial {u}(f)}{\partial\, x_1}\right]\d s;\\
  &&I_0(f,g):=\int_{\Gamma_0}\left[\frac{\partial {u}(g)}{\partial\, x_1}{u}(f)-{u}(g)\frac{\partial {u}(f)}{\partial\, x_1}\right]\d s,
 \end{eqnarray*} 
 where $u(f)$ and $u(g)$ are LAP solutions with sources $f$ and $g$.  We can also define the operators that depend on $\epsilon>0$:
  \begin{eqnarray*}
  &&I_1^\epsilon(f,g):=\int_{\Gamma_1}\left[\frac{\partial {u}_\epsilon(g)}{\partial x_1}{u}_\epsilon(f)-{u}_\epsilon(g)\frac{\partial {u}_\epsilon(f)}{\partial x_1}\right]\d s;\\
  &&I_0^\epsilon(f,g):=\int_{\Gamma_0}\left[\frac{\partial {u}_\epsilon(g)}{\partial x_1}{u}_\epsilon(f)-{u}_\epsilon(g)\frac{\partial {u}_\epsilon(f)}{\partial x_1}\right]\d s;
 \end{eqnarray*} 
where $u_\epsilon(f)$ and $u_\epsilon(g)$ are unique solutions of \eqref{eq:wg1}-\eqref{eq:wg2} with $k^2$ replaced by $k^2_\epsilon=k^2+\i\epsilon$ ($\epsilon>0$).

\begin{lemma}\label{th:bdint}
For any $f,\,g\in  \high{\left(H^1(\Omega_0)\right)^*}$,  $I_0(f,g)=I_1(f,g)=0$.

\end{lemma}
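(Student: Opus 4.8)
The plan is to prove the identities first for the absorptive problems with parameter $\epsilon>0$, where $u_\epsilon(f)$ and $u_\epsilon(g)$ are genuine $H^1(\Omega)$-functions that decay at infinity, and then to pass to the limit $\epsilon\to0^+$ via the limiting absorption principle (available under Assumption \ref{asp1}). Throughout, $I_1^\epsilon,I_0^\epsilon$ and their limits are to be read as the $H^{1/2}(\Gamma_j)$--$H^{-1/2}(\Gamma_j)$ duality pairings of the Dirichlet trace against the normal ($x_1$-)trace, taken from the source-free side; these make sense because $\Gamma_0$ and $\Gamma_1$ border the cells $\Omega_{-1}$ and $\Omega_1$, which do not meet the supports of $f$ and $g$, so there $\Delta u_\epsilon(\cdot)=-k_\epsilon^2q\,u_\epsilon(\cdot)\in L^2$ and the normal-trace theorem for $\{v\in H^1:\Delta v\in L^2\}$ applies cell by cell (the same holds in the limit, and also when $f,g\in(H^1(\Omega_0))^*$, cf.\ Remark \ref{rem}).

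First I would fix $\epsilon>0$ and apply the \emph{unconjugated} Green's second identity to $u_\epsilon(f),u_\epsilon(g)$ on one cell $\Omega_j$ with $j\ge1$. There the source is absent, so $\Delta u_\epsilon(f)+k_\epsilon^2q\,u_\epsilon(f)=0$ and likewise for $g$; hence the volume term $\int_{\Omega_j}\bigl(u_\epsilon(g)\Delta u_\epsilon(f)-u_\epsilon(f)\Delta u_\epsilon(g)\bigr)\d x$ vanishes, the contributions over the upper and lower parts of $\partial\Omega_j$ vanish by the Neumann condition \eqref{eq:wg2}, and only the boundary terms over $\Gamma_j$ and $\Gamma_{j+1}$, with opposite orientations, survive, giving $I_j^\epsilon(f,g)=I_{j+1}^\epsilon(f,g)$ (with $I_j^\epsilon$ defined on $\Gamma_j$ exactly as $I_1^\epsilon$). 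Iterating, $I_1^\epsilon(f,g)=I_{n+1}^\epsilon(f,g)$ for every $n\ge1$, and the same computation on the cells $\Omega_j$, $j\le-1$, gives $I_0^\epsilon(f,g)=I_{-m}^\epsilon(f,g)$ for every $m\ge1$. Since $u_\epsilon(f),u_\epsilon(g)\in H^1(\Omega)$ globally (Lax--Milgram, as $\epsilon>0$), we have $\|u_\epsilon(f)\|_{H^1(\Omega_n)}+\|u_\epsilon(g)\|_{H^1(\Omega_n)}\to0$ as $|n|\to\infty$; combined with $\Delta u_\epsilon(\cdot)=-k_\epsilon^2q\,u_\epsilon(\cdot)$ on the far cells this forces $I_{n+1}^\epsilon(f,g)\to0$ and $I_{-m}^\epsilon(f,g)\to0$ through continuity of the trace and normal-trace maps. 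Therefore $I_1^\epsilon(f,g)=I_0^\epsilon(f,g)=0$ for every $\epsilon>0$. (Running Green's identity on $\Omega_0$ instead gives $I_0^\epsilon-I_1^\epsilon=\int_{\Omega_0}(u_\epsilon(g)f-u_\epsilon(f)g)$, i.e.\ the usual reciprocity relation; it is consistent with, but not needed for, the claim.)

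Finally I would let $\epsilon\to0^+$. By Assumption \ref{asp1} the limiting absorption principle holds, so $u_\epsilon(f)\to u(f)$ and $u_\epsilon(g)\to u(g)$ in $H^1_{\loc}(\Omega)$, in particular in $H^1(\Omega_1)$ and in $H^1$ of a cell left of $\Gamma_0$. On those cells one moreover has $\Delta u_\epsilon(f)=-k_\epsilon^2q\,u_\epsilon(f)\to-k^2q\,u(f)=\Delta u(f)$ in $L^2$, so $(u_\epsilon(f),\Delta u_\epsilon(f))$ converges in $H^1\times L^2$ and therefore $\partial_{x_1}u_\epsilon(f)|_{\Gamma_1}\to\partial_{x_1}u(f)|_{\Gamma_1}$ in $H^{-1/2}(\Gamma_1)$, while $u_\epsilon(g)|_{\Gamma_1}\to u(g)|_{\Gamma_1}$ in $H^{1/2}(\Gamma_1)$ (and symmetrically in $f$ and $g$, and on $\Gamma_0$). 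Continuity of the duality pairing gives $I_1^\epsilon(f,g)\to I_1(f,g)$ and $I_0^\epsilon(f,g)\to I_0(f,g)$; since all the $\epsilon$-quantities vanish, $I_1(f,g)=I_0(f,g)=0$.

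The routine algebra aside, the only delicate point is the trace bookkeeping just described: one must check that $I_1^\epsilon,I_0^\epsilon$ and their limits are legitimate $H^{1/2}$--$H^{-1/2}$ pairings, and that these pairings pass to the limit. Both are handled by working on the source-free cells $\Omega_{\pm1}$ adjacent to $\Gamma_0,\Gamma_1$, where the solutions satisfy the homogeneous equation and hence lie in $\{v\in H^1(\Omega_{\pm1}):\Delta v\in L^2(\Omega_{\pm1})\}$, on which the normal-trace map $(v,\Delta v)\mapsto\partial_{x_1}v|_{\Gamma_j}$ is continuous; everything else is a direct application of Green's identity and of the LAP convergence.
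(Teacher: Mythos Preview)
Your proof is correct and follows essentially the same route as the paper: apply Green's identity cellwise in the source-free region to propagate $I_1^\epsilon$ (resp.\ $I_0^\epsilon$) out to $\Gamma_n$ with $n\to\pm\infty$, use the decay of $u_\epsilon(f),u_\epsilon(g)$ to force the value to $0$, and then pass to the limit $\epsilon\to0^+$ via the LAP. Your trace bookkeeping (reading the boundary terms as $H^{1/2}$--$H^{-1/2}$ pairings and justifying the normal trace on the source-free cells) is more explicit than the paper's treatment, which simply cites a trace estimate and the exponential decay, but the argument is the same.
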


\high{Note that a similar result has been shown in Theorem 3, \cite{Fliss2015} for a more general situation.}

\begin{proof}
 We first consider $I_1^\epsilon(f,g)$. 
As both solutions ${u}_\epsilon(f)$ and ${u}_\epsilon(g)$ satisfy the homogeneous Helmholtz equation $\Delta u_\epsilon+(k^2+\i\epsilon)qu_\epsilon=0$ in $\Omega_1$, use the Green's formula,
\begin{equation*}
\begin{aligned}
0&=\int_{\Omega_1}\Big({u}_\epsilon(f)\Delta {u}_\epsilon(g) - {u}_\epsilon(g)\Delta {u}_\epsilon(f)\Big)\d x\\
&=\left(\int_{\Gamma_2}-\int_{\Gamma_1}\right)\left[\frac{\partial {u}_\epsilon(g)}{\partial x_1}{u}_\epsilon(f)-{u}_\epsilon(g)\frac{\partial {u}_\epsilon(f)}{\partial x_1}\right]\d s\\
&=\int_{\Gamma_2}\left[\frac{\partial {u}_\epsilon(g)}{\partial x_1}{u}_\epsilon(f)-{u}_\epsilon(g)\frac{\partial {u}_\epsilon(f)}{\partial x_1}\right]\d s-I_1^\epsilon(f,g).
\end{aligned}
\end{equation*}
Thus $I_1^\epsilon(f,g)=\int_{\Gamma_2}\left[\frac{\partial {u}_\epsilon(g)}{\partial x_1}{u}_\epsilon(f)-{u}_\epsilon(g)\frac{\partial {u}_\epsilon(f)}{\partial x_1}\right]\d s$. 
Use the Green's formula repeatedly in $\Omega_2,\dots,\Omega_{n-1}$, then for $n\geq 3$,
\begin{equation*}
I_1^\epsilon(f,g)=\int_{\Gamma_n}\left[\frac{\partial {u}_\epsilon(g)}{\partial x_1}{u}_\epsilon(f)-{u}_\epsilon(g)\frac{\partial {u}_\epsilon(f)}{\partial x_1}\right]\d s
\end{equation*}
for any integer $n\geq 2$. With the result of Theorem 3.10, \cite{Cakon2006}, there is a $C>0$ independent of $n$ such that
\begin{equation*}
 \left|I_1^\epsilon(f,g)\right|\leq C\|{u}_\epsilon(f)\|_{H^1(\Omega_n)}\|{u}_\epsilon(g)\|_{H^1(\Omega_n)}.
\end{equation*}
From the exponential decay of both functions ${u}_\epsilon(f)$ and ${u}_\epsilon(g)$, $I_1^\epsilon(f,g)=0$. As ${u}(f)$ and ${u}(g)$ are LAP solutions, $\lim_{\epsilon\rightarrow 0^+}u_\epsilon(f)=u(f)$ and $\lim_{\epsilon\rightarrow 0^+}u_\epsilon(g)=u(g)$ in $H^1_{loc}(\Omega)$, thus
\begin{equation*}
I_1(f,g)=\lim_{\epsilon\rightarrow 0}\int_{\Gamma_1}\left[\frac{\partial {u}_\epsilon(g)}{\partial x_1}{u}_\epsilon(f)-{u}_\epsilon(g)\frac{\partial {u}_\epsilon(f)}{\partial x_1}\right]\d s=0.                                                                                                                                    
\end{equation*}
\high{We can prove that $I_0(f,g)=I_1(f,g)=0$ with the same technique. }
The proof is finished.
\end{proof}

Then we are prepared to prove the density of the space $\mathcal{U}$  in the following lemma.

\high{
\begin{lemma}
 The space $\mathcal{U}$ is dense in $H^1(\Omega_0)$.
\end{lemma}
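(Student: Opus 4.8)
The plan is to prove density of $\mathcal{U}$ in $H^1(\Omega_0)$ by a Hahn--Banach / duality argument: it suffices to show that any bounded linear functional on $H^1(\Omega_0)$ that annihilates $\mathcal{U}$ must be the zero functional. Identifying $\left(H^1(\Omega_0)\right)^*$ with a suitable function space via the Hilbert triple $H^1(\Omega_0)\subset L^2(\Omega_0)\subset\left(H^1(\Omega_0)\right)^*$ introduced above, such a functional is represented by some $h\in\left(H^1(\Omega_0)\right)^*$, and the annihilation condition reads $\langle v, h\rangle = 0$ for every $v = u(f)|_{\Omega_0} \in \mathcal{U}$, i.e.\ for every $f\in L^2(\Omega_0)$ (and, by the density of $L^2(\Omega_0)$ in $\left(H^1(\Omega_0)\right)^*$ noted above, the same will then hold for all $f$ in that larger space).

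\textbf{Key steps.} First I would express $\langle u(f), h\rangle$ through a bilinear pairing and use a ``transposed'' or adjoint LAP solution: define $u(h)$ to be the LAP solution of \eqref{eq:wg1}--\eqref{eq:wg2} with source $h$ (legitimate by Remark \ref{rem}, since $h\in\left(H^1(\Omega_0)\right)^*$). The goal is then to show a symmetry/reciprocity identity of the form $\langle u(f), h\rangle_{\Omega_0} = \langle f, u(h)\rangle_{\Omega_0}$, or more precisely an identity relating the $\Omega_0$-pairing of $u(f)$ with $h$ to the $\Omega_0$-pairing of $u(h)$ with $f$, up to boundary terms on $\Gamma_0$ and $\Gamma_1$. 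This is exactly where Lemma \ref{th:bdint} enters: applying Green's formula to the pair $u(f), u(h)$ on $\Omega_0$ produces the source terms plus the boundary integrals $I_0, I_1$, which vanish. Hence the annihilation condition $\langle u(f), h\rangle = 0$ for all $f$ forces the analogous pairing involving $u(h)$ to vanish for all $f\in L^2(\Omega_0)$, and since $L^2(\Omega_0)$ is dense in $\left(H^1(\Omega_0)\right)^*$ and the pairing is nondegenerate, this yields $u(h) = 0$ in $\Omega_0$. Second, from $u(h)|_{\Omega_0} = 0$ I would propagate the vanishing: by the translation operator $\mathcal{R}$ (or directly by the structure of \eqref{eq:LAP}) and the fact that $u(h)$ solves the homogeneous Helmholtz equation outside $\Omega_0$, unique continuation gives $u(h)\equiv 0$ on all of $\Omega$. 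Finally, feeding $u(h)=0$ back into the equation $\Delta u(h) + k^2 q\, u(h) = h$ on $\Omega_0$ gives $h = 0$, completing the argument.

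\textbf{Main obstacle.} The delicate point is establishing the reciprocity identity $\langle u(f), h\rangle = \langle f, u(h)\rangle$ rigorously when $h$ lies only in $\left(H^1(\Omega_0)\right)^*$ rather than in $L^2(\Omega_0)$: the pairing $\langle f, u(h)\rangle$ must be interpreted correctly, and one must justify the Green's-identity manipulation at the level of the limiting absorption solutions (working first at $\epsilon>0$, where everything is classical and the $u_\epsilon$ decay exponentially, then passing to the limit using $u_\epsilon(f)\to u(f)$, $u_\epsilon(h)\to u(h)$ in $H^1_{\loc}(\Omega)$ together with the $n$-uniform bound of Theorem \ref{th:stab}). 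A secondary subtlety is the unique-continuation / propagation step: one needs that a LAP solution with zero Cauchy data on $\Gamma_0\cup\Gamma_1$ and zero source in the exterior cells vanishes identically, which follows from unique continuation for the Helmholtz equation across the cell interfaces. Once the reciprocity identity is in hand, the rest is a short duality argument.
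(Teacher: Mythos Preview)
Your proposal is correct and follows essentially the same route as the paper's proof: a Hahn--Banach/duality argument, with the reciprocity identity $\int_{\Omega_0} u(f)\,h = \int_{\Omega_0} f\,u(h)$ obtained via Green's formula together with Lemma~\ref{th:bdint}, and then $u(h)|_{\Omega_0}=0$ forcing $h=0$. One minor simplification: your unique-continuation step propagating $u(h)=0$ to all of $\Omega$ is unnecessary, since the equation $\Delta u(h)+k^2 q\,u(h)=h$ already holds in $\Omega_0$ and gives $h=0$ there directly; also, the paper establishes reciprocity first for $g,\psi\in L^2(\Omega_0)$ (where Green's formula applies classically) and then extends by density, which is slightly cleaner than passing through the $\epsilon$-limit.
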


\begin{proof}
 As $L^2(\Omega_0)$ is dense in $\left(H^1(\Omega_0)\right)^*$, $\overline{\mathcal{U}}=\overline{\widetilde{\mathcal{U}}}$. Thus we only need to prove that $\overline{\widetilde{\mathcal{U}}}=H^1(\Omega_0)$.
 
 We prove by contradiction. 
Suppose the set $\widetilde{\mathcal{U}}$ is not dense in $H^1(\Omega_0)$, i.e., $\overline{\widetilde{\mathcal{U}}}\neq H^1(\Omega_0)$. Then there is  a  non-zero $\phi\in \left(H^1(\Omega_0)\right)^*$ such that
\begin{equation*}
\int_{\Omega_0}{u}(f)\overline{\phi}\d x=0,\quad\forall f\in \left(H^1(\Omega_0)\right)^*.
\end{equation*}
For any $g,\psi\in L^2(\Omega_0)$, let $u(g),{u}(\overline{\psi})$ be the LAP solutions of \eqref{eq:wg1}-\eqref{eq:wg2} with the sources $g$ and $\overline{\psi}$. Use the result in Lemma \ref{th:bdint} and Green's formula,
\begin{equation*}
\begin{aligned}
\int_{\Omega_0}{u}(g)\overline{\psi}\d x&=\int_{\Omega_0}{u}(g)\left[\Delta {u}(\overline{\psi})+k^2q{u}(\overline{\psi})\right]\d x\\
&=\int_{\Omega_0}{u}(\overline{\psi})\left[\Delta {u}(g)+k^2q{u}(g)\right]\d x+\left(\int_{\Gamma_1}-\int_{\Gamma_0}\right)\left[\frac{\partial {u}(\overline{\psi})}{\partial x_1}{u}(g)-{u}(\overline{\psi})\frac{\partial {u}(g)}{\partial x_1}\right]\d s\\
&=\int_{\Omega_0}{u}(\overline{\psi})g\d x+I_1(g,\overline{\psi})-I_0(g,\overline{\psi})=\int_{\Omega_0}{u}\left(\overline{\psi}\right)g\d x.
\end{aligned}
\end{equation*}
 From the density of $L^2(\Omega_0)$ in the space $\left(H^1(\Omega_0)\right)^*$, this equation is easily extended to $g,\psi\in\left(H^1(\Omega_0)\right)^*$. Then $0=\int_{\Omega_0}u(f)\overline{\phi}\d x=\int_{\Omega_0}u(\overline{\phi})f\d x$ for any $f\in\left(H^1(\Omega_0)\right)^*$, thus implies  $u(\overline{\phi})=0$ in $H^1(\Omega_0)$. Then $\phi=0$ in $\left(H^{1}(\Omega_0)\right)^*$, which contradicts with the assumption that $\phi\neq 0$. So $\widetilde{\mathcal{U}}$ is dense in $H^1(\Omega_0)$, which implies that $\overline{\mathcal{U}}=H^1(\Omega_0)$. The proof is finished.

\end{proof}
}

In the following theorem, we prove that $\mathcal{U}$ is not only dense, but also equal to $H^1(\Omega_0)$.

\begin{theorem}\label{th:U}
 The space $\mathcal{U}$ is closed, thus $\mathcal{U}=H^1(\Omega_0)$.
\end{theorem}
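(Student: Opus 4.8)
The plan is to show that the linear map $f \mapsto u(f)\big|_{\Omega_0}$ from $L^2(\Omega_0)$ (or, more conveniently, from $\left(H^1(\Omega_0)\right)^*$) into $H^1(\Omega_0)$ has closed range; combined with the density result just proved, this forces $\mathcal{U} = H^1(\Omega_0)$. First I would invoke the open mapping theorem in a suitable form: by Theorem \ref{th:stab} (and Remark \ref{rem}, extending it to $f \in \left(H^1_z(\Omega_0)\right)^*$) the solution operator $L \colon \left(H^1(\Omega_0)\right)^* \to H^1(\Omega_0)$, $Lf := u(f)\big|_{\Omega_0}$, is bounded. So it suffices to produce a continuous left inverse on $\mathcal{U}$, i.e.\ to recover $f$ from $u(f)\big|_{\Omega_0}$ in a stable way, which would show $L$ is bounded below modulo its kernel and hence has closed range.

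The natural candidate for the inverse is the differential operator itself: on $\Omega_0$ we have $\Delta u(f) + k^2 q\, u(f) = f$ in the sense of distributions, so formally $f = (\Delta + k^2 q)\big(u(f)\big|_{\Omega_0}\big)$. The key point is that $u(f)\big|_{\Omega_0} \in H^1(\Omega_0)$ does not by itself determine $\Delta u(f)$ inside $\Omega_0$ unless one also controls the Cauchy data on $\Gamma_0$ and $\Gamma_1$ — but here is where the periodic/LAP structure enters. The translation operator $\mathcal{R}$ and its analogue on the left, together with the Dirichlet-to-Neumann-type maps associated with the half-waveguide problems $\cup_{j \ge 1}\Omega_j$ and $\cup_{j \le -1}\Omega_j$, express $\partial_{x_1} u(f)\big|_{\Gamma_1}$ and $\partial_{x_1} u(f)\big|_{\Gamma_0}$ as bounded linear functions of $u(f)\big|_{\Gamma_1}$ and $u(f)\big|_{\Gamma_0}$ respectively — these exterior DtN operators are well-defined and bounded precisely because the LAP holds under Assumption \ref{asp1} and the representation \eqref{eq:LAP}, \eqref{eq:LAP_neg} gives the outgoing solution in the two half-waveguides explicitly as a contour integral plus finitely many propagating Bloch modes. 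Hence $u(f)\big|_{\Omega_0} \in H^1(\Omega_0)$ determines the full Cauchy data on $\partial\Omega_0 \cap (\Gamma_0 \cup \Gamma_1)$, and therefore determines $\Delta u(f)\big|_{\Omega_0} \in \left(H^1(\Omega_0)\right)^*$ by a bounded operation, so $f = (\Delta + k^2 q) u(f)$ is recovered with a norm bound $\|f\|_{(H^1(\Omega_0))^*} \le C\, \|u(f)\|_{H^1(\Omega_0)}$.

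Granting this stability estimate, the argument concludes quickly: if $u_m = L f_m \to v$ in $H^1(\Omega_0)$, then $(f_m)$ is Cauchy in $\left(H^1(\Omega_0)\right)^*$ by the estimate applied to differences, so $f_m \to f$; continuity of $L$ gives $L f = v$, hence $v \in \widetilde{\mathcal{U}}$ and $\widetilde{\mathcal{U}}$ is closed, so $\widetilde{\mathcal{U}} = \overline{\widetilde{\mathcal{U}}} = H^1(\Omega_0)$ by the density lemma; since $\mathcal{U} \subset \widetilde{\mathcal{U}} \subset H^1(\Omega_0)$ and $\overline{\mathcal{U}} = H^1(\Omega_0)$, one also gets $\mathcal{U} = H^1(\Omega_0)$ (indeed already $L^2(\Omega_0)$ maps onto a dense-and-closed, hence full, subspace).

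The main obstacle I anticipate is making rigorous the claim that the exterior Cauchy data of the LAP solution depend \emph{boundedly} on the trace $u(f)\big|_{\Gamma_1}$: one must argue that the half-waveguide radiation problem with prescribed Dirichlet data on $\Gamma_1$ (selecting the outgoing solution built from $S_+$, consistent with \eqref{eq:LAP}) is uniquely and stably solvable, so that its DtN map is a bounded operator $X \to X^*$. This is essentially a reformulation of the well-posedness encoded in Theorem \ref{th:stab}, but it requires care to check that the solution produced by the limiting absorption principle is the \emph{same} one selected by the half-guide problem, i.e.\ that no spurious left-going modes are introduced; the properties $\rho(\mathcal{R}) \le 1$, Lemma \ref{th:s_pm_dist}, and the explicit decomposition \eqref{eq:LAP} are exactly what is needed to pin this down. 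An alternative, perhaps cleaner route avoiding DtN operators altogether is: show directly from \eqref{eq:LAP} that $\|f\|_{(H^1(\Omega_0))^*} \le C\|u(f)\|_{H^1(\Omega_0)}$ fails to hold only if there is a nonzero $f$ with $u(f)\big|_{\Omega_0}=0$, then rule that out — if $u(f)$ vanishes on $\Omega_0$ it vanishes on all of $\Omega$ by the unique continuation / translation structure, forcing $f = 0$ — and combine this injectivity with the compactness furnished by $\mathcal{R}$ being compact to upgrade to boundedness below on a closed complement; I would present the DtN version as the main line and mention this as a remark.
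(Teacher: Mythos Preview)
Your overall architecture --- establish a reverse estimate $\|f\|_\ast \le C\,\|u(f)\|_{H^1(\Omega_0)}$, deduce that a Cauchy sequence $u(f_n)$ forces $(f_n)$ to be Cauchy, then invoke forward continuity of the solution operator (Theorem~\ref{th:stab}, Remark~\ref{rem}) --- is precisely the paper's, and your concluding paragraph matches the paper's closing argument almost verbatim. The difference is only in how the reverse estimate is obtained. The paper builds no half-waveguide DtN maps: it simply multiplies $\Delta u(f_m-f_n)+k^2 q\,u(f_m-f_n)=f_m-f_n$ by $\overline{\phi}$ with $\phi\in C_0^\infty(\Omega_0)$ and integrates by parts, so the compact support of $\phi$ kills all boundary contributions on $\Gamma_0$ and $\Gamma_1$ and one reads off $\bigl|\int_{\Omega_0}(f_m-f_n)\overline{\phi}\,\d x\bigr|\le C\|u(f_m-f_n)\|_{H^1(\Omega_0)}\|\phi\|_{H^1(\Omega_0)}$ directly --- the Neumann data on the lateral edges, which you set out to recover via outgoing DtN operators, never appear. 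Your DtN construction is correct in principle and does deliver the full $\left(H^1(\Omega_0)\right)^*$ bound (arguably more than testing against $C_0^\infty$ alone gives), but it is substantially heavier machinery than the paper uses; the paper's shortcut is exactly that compactly supported test functions make the lateral-boundary issue vanish, so neither the DtN maps nor your alternative injectivity-plus-compactness remark are needed.
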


\begin{proof}

Suppose there is a sequence $\{f_n\}_{n=1}^\infty\subset L^2(\Omega_0)$ such that ${u}(f_n)$
is a Cauchy sequence in $H^1(\Omega_0)$. To prove that $\mathcal{U}$ is closed, we have to find out an $f_0\in L^2(\Omega_0)$ such that $\lim_{n\rightarrow\infty}{u}(f_n)={u}(f_0)$ in $H^1(\Omega_0)$. 
From the Riesz representation theorem, for any $n\in\N$, there is an $\widetilde{f}_n\in H^1(\Omega_0)$ such that
\begin{equation*}
\int_{\Omega_0}f_n\overline{\phi}\d x=\left<\widetilde{f}_n,\phi\right>,\quad\text{ for all }\phi\in H^1(\Omega_0),
\end{equation*}  
where $\left<\cdot,\cdot\right>$ is the inner product in the space  $H^1(\Omega_0)$.

As ${u}(f)$ depends linearly on $f$, for any $m,n\in\N$, ${u}(f_m-f_n)={u}(f_m)-{u}(f_n)$. Then ${u}(f_m-f_n)$ is the LAP solution of \eqref{eq:wg1}-\eqref{eq:wg2} with source $f_m-f_n$, i.e.,
\begin{equation*}
\Delta {u}(f_m-f_n)+k^2 q {u}(f_m-f_n)=f_m-f_n\quad\text{ in }\Omega_0;\quad \frac{\partial {u}(f_m-f_n)}{\partial\nu}=0\quad\text{ on }\partial\,\Omega_0.
\end{equation*}
Multiply the above equation with $\overline{\phi}$ where $\phi$  is any function in  $C^\infty_0(\Omega_0)$, and apply the Green's formula, then
\begin{equation*}
\int_{\Omega_0}(f_m-f_n)\overline{\phi}\d x=\int_{\Omega_0}\left[k^2 q {u}(f_m-f_n)\overline{\phi}-\nabla {u}(f_m-f_n)\cdot\nabla\overline{\phi}\right]\d x.
\end{equation*}
Thus there is a constant $C=\max\{k^2\|q\|_\infty,1\}$ such that
\begin{equation*}
\left|\left<\widetilde{f}_m-\widetilde{f}_n,\phi\right>\right|=\left|\int_{\Omega_0}(f_m-f_n)\overline{\phi}\d x\right|\leq C\|{u}(f_m-f_n)\|_{H^1(\Omega_0)}\|\phi\|_{H^1(\Omega_0)}.
\end{equation*}
This implies that
\begin{equation*}
\left\|\widetilde{f}_m-\widetilde{f}_n\right\|_{H^{1}(\Omega_0)}\leq C\|{u}(f_m)-{u}(f_n)\|_{H^1(\Omega_0)}.
\end{equation*}
As $\{{u}(f_n)\}_{n=1}^\infty$ is a Cauchy sequence, $\left\{{f}_n\right\}_{n=1}^\infty$ is also a Cauchy sequence in $\left(H^1(\Omega_0)\right)^*$. As the space $\left(H^1(\Omega_0)\right)^*$ is closed, there is an $\widetilde{f}_0\in H^1(\Omega_0)$ such that $\widetilde{f}_n\rightarrow \widetilde{f}_0$, $n\rightarrow\infty$. 
From the choice of $\widetilde{f}_m$ and $\widetilde{f}_n$, for any $\phi\in H^1(\Omega_0)$,
\begin{equation*}
\int_{\Omega_0}\left(f_m-f_n\right)\overline{\phi}\d x=\left<\widetilde{f}_m-\widetilde{f}_n,\phi\right>\rightarrow 0,\quad m,n\rightarrow\infty.
\end{equation*}
From the fact that $H^1(\Omega_0)$ is a dense subspace of $L^2(\Omega_0)$, for any $\phi\in L^2(\Omega_0)$, $\int_{\Omega_0}\left(f_m-f_n\right)\overline{\phi}\d x\rightarrow 0$ as $m,\,n\rightarrow\infty$. Thus $\{f_n\}$ is a Cauchy sequence in $L^2(\Omega_0)$. 
From Theorem \ref{th:stab}, ${u}(f_n)\rightarrow {u}(f_0)$ in $H^1(\Omega_0)$. Thus $\mathcal{U}$ is closed, which implies that $\mathcal{U}=H^1(\Omega_0)$.  The proof is finished.
\end{proof}

From Theorem \ref{th:U}, any function in $H^1(\Omega_0)$ is an LAP solution of \eqref{eq:wg1}-\eqref{eq:wg2} with some $f\in L^2(\Omega_0)$. Define the set
\begin{equation*}
\mathcal{V}:=\left\{{u}\big|_{\Gamma\high{_1}}:\,{u}\in\mathcal{U}\right\},
\end{equation*}
then the following corollary is a direct result from Theorem \ref{th:U} and the trace theorem.

\begin{corollary}\label{th:U_co}
 Let $\mathcal{V}$ be defined as above, then $\mathcal{V}=H^{1/2}(\Gamma)\high{=X}$.
\end{corollary}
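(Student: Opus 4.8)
The plan is to derive Corollary \ref{th:U_co} directly from Theorem \ref{th:U} together with standard trace theory. By Theorem \ref{th:U}, every element of $H^1(\Omega_0)$ is the restriction to $\Omega_0$ of an LAP solution $u(f)$ of \eqref{eq:wg1}-\eqref{eq:wg2} for some $f\in L^2(\Omega_0)$; equivalently $\mathcal{U}=H^1(\Omega_0)$. Since $\mathcal{V}$ is by definition the image of $\mathcal{U}$ under the trace operator $u\mapsto u|_{\Gamma_1}$, we have $\mathcal{V}=\{\,u|_{\Gamma_1}:\,u\in H^1(\Omega_0)\,\}$, i.e.\ $\mathcal{V}$ is precisely the range of the Dirichlet trace $H^1(\Omega_0)\to H^{1/2}(\Gamma_1)$ onto the edge $\Gamma_1$.

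The main point is then to invoke the classical trace theorem: the trace operator $\gamma_{\Gamma_1}:H^1(\Omega_0)\to H^{1/2}(\Gamma_1)$ is bounded and \emph{surjective}, and moreover admits a bounded right inverse (a lifting/extension operator $E:H^{1/2}(\Gamma_1)\to H^1(\Omega_0)$ with $\gamma_{\Gamma_1}E=\mathrm{id}$). Surjectivity gives $\mathcal{V}=H^{1/2}(\Gamma_1)$; since $\Gamma_1=\{1\}\times(0,1)$ is merely a translate of $\Gamma=(0,1)$, the spaces $H^{1/2}(\Gamma_1)$ and $H^{1/2}(\Gamma)=X$ are isometrically identified, so $\mathcal{V}=X$. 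One small caveat worth addressing: $\Omega_0=(0,1]\times(0,1)$ is a rectangle with corners, so $\Gamma_1$ is one open edge of a Lipschitz domain; the trace theorem for Lipschitz domains still applies, and the trace onto a single edge (away from the corners in the sense of the intrinsic $H^{1/2}$ norm on that edge) is surjective — this is standard and can be cited rather than reproved.

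Concretely, I would structure the proof as: (i) recall $\mathcal{U}=H^1(\Omega_0)$ from Theorem \ref{th:U}; (ii) note $\mathcal{V}=\gamma_{\Gamma_1}(H^1(\Omega_0))$; (iii) cite the trace theorem for the Lipschitz domain $\Omega_0$ to conclude $\gamma_{\Gamma_1}$ maps $H^1(\Omega_0)$ onto $H^{1/2}(\Gamma_1)$; (iv) identify $H^{1/2}(\Gamma_1)$ with $H^{1/2}(\Gamma)=X$ via the translation $x_1\mapsto x_1-1$. Honestly there is essentially no obstacle here: once Theorem \ref{th:U} is in hand, the corollary is a one-line consequence of the surjectivity of the trace operator, and the only thing to be slightly careful about is phrasing the trace statement for the single edge $\Gamma_1$ of the rectangle rather than the whole boundary $\partial\Omega_0$ — but since the solutions already satisfy homogeneous Neumann conditions on the horizontal edges $\partial\Omega_0$, one can just as well extend a given boundary datum on $\Gamma_1$ to a full $H^1(\Omega_0)$ function and then realize it as an LAP solution, which is exactly what Theorem \ref{th:U} provides.
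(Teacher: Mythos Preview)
Your proposal is correct and matches the paper's own approach exactly: the paper states that the corollary ``is a direct result from Theorem \ref{th:U} and the trace theorem,'' which is precisely the argument you outline.
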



\section{Spectrum Decomposition of periodic operators}

 In this section, we get back to the operator $\mathcal{R}$ defined at the beginning. As was discussed, it is an operator from $X=H^{1/2}(\Gamma)$ to itself. Furthermore,
\begin{equation*}
u\big|_{\Gamma_{j+1}}=\mathcal{R} u\big|_{\Gamma_j},\quad \forall\,j\geq 1.
\end{equation*}
In this section, we consider the spectrum decomposition of the operator $\mathcal{R}$. It  has been conjectured and already been applied to numerical simulations in \cite{Joly2006} that the generalized eigenfunctions of the operator $\mathcal{R}$ form a complete set in $L^2(\Gamma_1)$, when $\epsilon>0$. \high{The proof was given in \cite{Hohag2013} when the scattering problem \eqref{eq:wg1}-\eqref{eq:wg2} is assumed to be uniquely solvalbe.      In this section, a different method is utilized for much more general cases.}


\subsection{Generalized residue theorem and application}

From the representation \eqref{eq:LAP}, as $w(z,\cdot)$ is meromorphic with respect to $z$ in the ball $B(0,\exp(-\tau))$ except for $\{0\}$, a natural idea is to apply the Residue theorem. From the definition of $\tau$, all the poles in $B(0,\exp(-\tau))$ are exactly all the elements of the set $RS$. However, as $RS$ has infinite number of elements and $\{0\}$ is the only accumulation point, classic Residue theorem is not available. In this section, we prove that the residue theorem \high{can} be extended to the domain with infinite number of poles when certain conditions are satisfied. We call the extension the {\em Generalized Residue Theorem}.

\begin{theorem}[Generalized Residue Theorem]\label{th:generalized_residue}
Suppose $\D$ is a simply connected open domain in $\C$. Suppose $f(z)$ is a meromorphic function in $\D\setminus\{z_0\}$, where $z_0$ is a point \high{lying} in $\D$. Moreover, $z_0$ may or may not be a pole of $f$ but it is the only  accumulation point of poles $\{z_j:\,j\in\N\}\subset D$. If there is a series $\{r_n\}$ \high{satisfying} (see Figure \ref{circle})
\begin{equation*}
r_1>r_2>\cdots>r_n>\cdots> 0,\quad\lim_{n\rightarrow\infty}r_n=0
\end{equation*}
such that $f(z)$ does not have poles when $|z-z_0|=r_n$ and 
\begin{equation*}
\frac{1}{2\pi\i}\oint_{|z-z_0|=r_{\high{n}}}f(z)\d z\rightarrow 0,\quad n\rightarrow\infty,
\end{equation*}
then  the following integral formula holds:
\begin{equation}\label{eq:gen_residue}
\frac{1}{2\pi\i}\oint_{\partial \D_0}f(z)\d z=\sum_{j\in\N:\,z_j\in\D_0}{\rm Res}\left(f(z),z=z_j\right),
\end{equation}
where $\D_0\subset\D$ is an open subdomain of $\D$ that satisfies $\overline{\D_0}\subset\D$. 
\end{theorem}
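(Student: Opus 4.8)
The plan is to reduce the statement to the classical residue theorem by a limiting/exhaustion argument. Since $\overline{\D_0}\subset\D$ and $\D_0$ is a bounded subdomain, only finitely many of the poles $z_j$ can lie in $\overline{\D_0}\setminus B(z_0,r)$ for any fixed $r>0$; indeed, if infinitely many $z_j$ stayed outside every $B(z_0,r_n)$ they would accumulate somewhere in the compact set $\overline{\D_0}$ at a point other than $z_0$, contradicting the hypothesis that $z_0$ is the only accumulation point. First I would fix $n$ large enough that the circle $|z-z_0|=r_n$ lies inside $\D_0$, and consider the region $\D_0\setminus \overline{B(z_0,r_n)}$. On this region $f$ has only finitely many poles — call them $z_{j_1},\dots,z_{j_{M(n)}}$ — and its boundary is $\partial\D_0$ together with the circle $|z-z_0|=r_n$ (traversed with the opposite orientation). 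Applying the classical residue theorem on this region gives
\begin{equation*}
\frac{1}{2\pi\i}\oint_{\partial\D_0}f(z)\d z-\frac{1}{2\pi\i}\oint_{|z-z_0|=r_n}f(z)\d z=\sum_{\ell=1}^{M(n)}{\rm Res}\big(f(z),z=z_{j_\ell}\big).
\end{equation*}

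\textbf{Passing to the limit.} Next I would let $n\to\infty$. The left-hand side has a fixed first term and a second term that tends to $0$ by hypothesis, so the left-hand side converges to $\frac{1}{2\pi\i}\oint_{\partial\D_0}f(z)\d z$. On the right-hand side, as $n\to\infty$ the finite set $\{z_{j_1},\dots,z_{j_{M(n)}}\}$ increases (for a nested sequence of radii $r_n\downarrow 0$) and exhausts exactly the set $\{z_j:\,j\in\N,\ z_j\in\D_0\}$ — every pole in $\D_0$ eventually lies outside $B(z_0,r_n)$ since $r_n\to 0$ and no pole equals $z_0$. Hence the partial sums $\sum_{\ell=1}^{M(n)}{\rm Res}(f,z_{j_\ell})$ converge, and their limit is by definition the (conditionally convergent) series $\sum_{j:\,z_j\in\D_0}{\rm Res}(f(z),z=z_j)$. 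This yields \eqref{eq:gen_residue}. One should be slightly careful to choose the radii $r_n$ so that each circle $|z-z_0|=r_n$ avoids $\partial\D_0$ as well as all poles; this is possible because the poles and $\partial\D_0$ together meet only countably many radii, while the hypothesis already supplies a sequence $r_n\to 0$ on which $f$ has no poles, and one may thin it further.

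\textbf{Main obstacle.} The delicate point is the convergence and the \emph{meaning} of the infinite sum on the right-hand side: a priori the series of residues need not converge absolutely, so the equality \eqref{eq:gen_residue} must be read as the limit of the symmetric partial sums determined by the exhausting annuli $\D_0\setminus\overline{B(z_0,r_n)}$. The argument above shows this particular ordering converges, precisely because the circle integrals $\oint_{|z-z_0|=r_n}f\,\d z\to 0$; without that hypothesis the partial sums could fail to converge. I would therefore state explicitly in the proof that the sum in \eqref{eq:gen_residue} is understood as $\lim_{n\to\infty}\sum_{j:\,z_j\in\D_0\setminus\overline{B(z_0,r_n)}}{\rm Res}(f,z_j)$, so that no unjustified rearrangement is claimed. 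The rest — the finiteness of poles in each annular region, the orientation bookkeeping on $\partial(\D_0\setminus\overline{B(z_0,r_n)})$, and the continuity of the fixed boundary integral — is routine.
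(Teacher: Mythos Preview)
Your proposal is correct and follows essentially the same approach as the paper: apply the classical residue theorem on the annular region $\D_0\setminus\overline{B(z_0,r_n)}$ (the paper calls it $\D_n$), obtain the identity with the extra circle integral, and let $n\to\infty$ using the hypothesis that the circle integrals vanish. Your additional remarks on the conditional nature of the residue sum and on thinning the sequence $\{r_n\}$ are sound refinements that the paper leaves implicit; the only minor omission is the trivial case $z_0\notin\D_0$, which the paper dispatches in one line via the classical residue theorem.
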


\begin{proof}When $z_0\notin\D_0$, as there are only finite number of poles in $\D\setminus\D_0$, the result is obtained directly from classic residue theorem. Thus we only consider the case that $z_0\in\D_0$.

As $z_0$ is the only accumulation point of $\{z_j:\,j\in\N\}$,  the number of poles in 
\begin{equation*}
 \D_n:=\{z\in\D_0:\,|z-z_0|>r_n\}
\end{equation*}
 is finite. As $\lim_{n\rightarrow\infty}r_n=0$,
\begin{equation*}
\cup_{n\in\N}\D_n=\D_0\setminus\{z_0\}.
\end{equation*}
As $z_0$ is the only accumulation point of poles,
\begin{equation*}
 \cup_{n\in\N}\Big[\D_n\cap \{z_j:\,j\in\N\}\Big]=\Big\{j\in\N:\,z_j\in\D_0\Big\}.
\end{equation*}
Apply the classic residue theorem to the domain $\D_n$ with boundary $\partial \D_0$ and $\partial B(z_0,r_n)$, then
\begin{equation*}
\frac{1}{2\pi\i}\oint_{\partial \D_n}f(z)\d z=\frac{1}{2\pi\i}\oint_{\partial \D_0}f(z)\d z-\frac{1}{2\pi\i}\oint_{|z-z_0|=r_n}f(z)\d z=\sum_{j\in\N:\,z_j\in \D_n}{\rm Res}(f(z),z=z_j).
\end{equation*}
Thus 
\begin{equation*}
\left|\sum_{j\in\N:\,z_j\in \D_n}{\rm Res}(f(z),z=z_j)\right|=\left|\frac{1}{2\pi\i}\oint_{\partial \D_0}f(z)\d z-\frac{1}{2\pi\i}\oint_{|z-z_0|=r_n}f(z)\d z\right|
\end{equation*}
As the integral $\frac{1}{2\pi\i}\oint_{|z-z_0|=r_n}f(z)\d z\rightarrow 0$ as $n\rightarrow \infty$, the series $\sum_{j\in\N:\,z_j\in \D_n}{\rm Res}(f(z),z=z_j)$ is uniformly bounded as $n\rightarrow\infty$. Thus
\begin{equation*}
\left|\frac{1}{2\pi\i}\oint_{\partial \D_0}f(z)\d z-\sum_{j\in\N:\,z_j\in \D_n}{\rm Res}(f(z),z=z_j)\right|=\left|\frac{1}{2\pi\i}\oint_{|z-z_0|=r_n}f(z)\d z\right|\rightarrow 0
\end{equation*}
as $n\rightarrow \infty$, the equation \eqref{eq:gen_residue} is proved. The proof is finished.

\end{proof}

\high{
We can easily extend the result in Theorem \ref{th:generalized_residue} to functions in $L^2(\Omega_0)$.
\begin{theorem}
\label{th:generalized_residue_l2}
Suppose the settings are the same as in Theorem \ref{th:generalized_residue}, $f(z,x)$ is a function that depends on both $z$ and $x$. For any fixed $z\in \D\setminus\{z_j:\,j\in\N\}$, $f(z,\cdot)\in L^2(\Omega_0)$. $f(z,\cdot)$ depends analytically on $z\in \D\setminus\{z_j:\,j\in\N\}$ and meromorphically on $z\in\D$ with poles at $\{z_j:\,j\in\N\}$, where the analytical dependence is defined in Definition \ref{def} and the meromorphical dependence is defined in the similar way. Suppose $f(z,x)$ does not have any poles on $|z-z_0|=r_n$ and
\[
\frac{1}{2\pi\i}\oint_{|z-z_0|=r_n}f(z,\cdot)\d z\rightarrow 0\,\text{ in }L^2(\Omega_0),\quad n\rightarrow\infty,
\]
then the following integral formula holds:
\begin{equation}
\label{eq:generalized_residue_l2}
\frac{1}{2\pi\i}\oint_{\partial \D_0}f(z,\cdot)\d z=\sum_{j\in\N:\,z_j\in\D_0}{\rm Res}(f(z,\cdot),z=z_j)\quad\text{ in }L^2(\Omega_0).
\end{equation}
This also implies that
\[
\frac{1}{2\pi\i}\oint_{\partial \D_0}f(z,\cdot)\d z=\sum_{j\in\N:\,z_j\in\D_0}{\rm Res}(f(z,\cdot),z=z_j)\quad\text{ almost everywhere in }\Omega_0.
\]
\end{theorem}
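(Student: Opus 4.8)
The plan is to reduce the $L^2(\Omega_0)$-valued statement to the scalar Generalized Residue Theorem (Theorem~\ref{th:generalized_residue}) by testing against an arbitrary $\phi\in L^2(\Omega_0)$, and then to upgrade the resulting weak identity to a strong $L^2$ identity by identifying the partial sums of residues explicitly.

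First I would fix $\phi\in L^2(\Omega_0)$ and set $g_\phi(z):=\int_{\Omega_0}f(z,x)\overline{\phi(x)}\d x$. Since $f(z,\cdot)$ depends analytically (in the sense of Definition~\ref{def}), resp.\ meromorphically, on $z$ with values in $L^2(\Omega_0)$, and since a bounded linear functional commutes with contour (Bochner) integration, $g_\phi$ is a scalar meromorphic function on $\D\setminus\{z_0\}$ whose poles lie in $\{z_j:j\in\N\}$ with $z_0$ the only possible accumulation point, $g_\phi$ has no pole on $|z-z_0|=r_n$, and $\frac{1}{2\pi\i}\oint_{|z-z_0|=r_n}g_\phi(z)\d z=\int_{\Omega_0}\bigl(\frac{1}{2\pi\i}\oint_{|z-z_0|=r_n}f(z,x)\d z\bigr)\overline{\phi(x)}\d x\to 0$ as $n\to\infty$ by Cauchy--Schwarz and the hypothesis on $f$. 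Hence Theorem~\ref{th:generalized_residue} applies to $g_\phi$. Moreover, since the residue ${\rm Res}(f(z,\cdot),z_j)$ is the $(-1)$st Laurent coefficient of the $L^2$-valued meromorphic function, equivalently a small-circle Bochner integral around $z_j$, one has ${\rm Res}(g_\phi,z_j)=\int_{\Omega_0}{\rm Res}(f(z,\cdot),z_j)\,\overline{\phi(x)}\d x$, so the conclusion of Theorem~\ref{th:generalized_residue} for $g_\phi$ reads exactly: the $L^2$-pairing of the left-hand side of \eqref{eq:generalized_residue_l2} with $\phi$ equals that of the right-hand side, for every $\phi\in L^2(\Omega_0)$.

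To turn this into an identity in $L^2(\Omega_0)$, I would look at the partial sums $S_n:=\sum_{z_j\in\D_n}{\rm Res}(f(z,\cdot),z_j)$, where $\D_n:=\{z\in\D_0:|z-z_0|>r_n\}$ contains only finitely many poles. Applying the classical finite residue theorem (as in the proof of Theorem~\ref{th:generalized_residue}) to $g_\phi$ on $\D_n$ and using again the commutation of the pairing with the contour integrals, one gets, for all $\phi$, that the pairing of $S_n$ with $\phi$ equals the pairing of $A-B_n$ with $\phi$, where $A:=\frac{1}{2\pi\i}\oint_{\partial\D_0}f(z,\cdot)\d z$ and $B_n:=\frac{1}{2\pi\i}\oint_{|z-z_0|=r_n}f(z,\cdot)\d z$ are genuine elements of $L^2(\Omega_0)$; hence $S_n=A-B_n$ in $L^2(\Omega_0)$. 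Since $B_n\to 0$ in $L^2(\Omega_0)$ by hypothesis, $S_n$ converges in $L^2(\Omega_0)$ to $A$, which is precisely \eqref{eq:generalized_residue_l2}. The almost-everywhere statement is then immediate, since equality in $L^2(\Omega_0)$ means equality of representatives off a null set (and, if one prefers a constructive statement, $L^2$-convergence of $S_n$ furnishes a subsequence of partial sums converging a.e.).

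I expect the only delicate point to be the bookkeeping around $L^2(\Omega_0)$-valued contour integration: checking that $z\mapsto f(z,\cdot)$ is continuous on the relevant curves so that all the Bochner integrals ($A$, $B_n$, and the small-circle residues) exist; that these integrals commute with the scalar pairing $\phi\mapsto\int_{\Omega_0}(\cdot)\overline{\phi}\d x$, which follows from Riemann-sum approximation together with continuity of the pairing; and that the residue ${\rm Res}(f(z,\cdot),z_j)$, defined as a Laurent coefficient of the $L^2$-valued function, is the element of $L^2(\Omega_0)$ whose pairing with each $\phi$ is ${\rm Res}(g_\phi,z_j)$. Once these facts are recorded, everything follows from Theorem~\ref{th:generalized_residue} applied to the scalar functions $g_\phi$ and from the hypothesis that the small-circle integrals vanish.
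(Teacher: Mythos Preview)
Your argument is correct. In the paper, Theorem~\ref{th:generalized_residue_l2} is stated without proof; the authors only remark that ``we can easily extend the result in Theorem~\ref{th:generalized_residue} to functions in $L^2(\Omega_0)$,'' so there is nothing to compare against beyond that hint. The most direct reading of the paper's hint is to repeat the proof of Theorem~\ref{th:generalized_residue} verbatim for the $L^2(\Omega_0)$-valued function, invoking the classical residue theorem for Banach-space-valued holomorphic functions on each $\D_n$ to obtain $S_n=A-B_n$ in $L^2(\Omega_0)$ and then passing to the limit. Your second paragraph is precisely this argument, with the pairing against $\phi$ serving as a device to justify the vector-valued residue identity rather than citing it as a black box; this is a perfectly clean way to make the ``easy extension'' rigorous.

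One small comment: your first paragraph, which applies the full Theorem~\ref{th:generalized_residue} to each scalar $g_\phi$, is logically redundant once you have the second paragraph. The identity $S_n=A-B_n$ in $L^2(\Omega_0)$ (obtained from the \emph{finite} residue theorem on $\D_n$, tested against every $\phi$) together with the hypothesis $B_n\to 0$ strongly already gives $S_n\to A$ in $L^2(\Omega_0)$; the weak identity for the infinite sum then follows rather than being needed as input. This does not affect correctness, but you could streamline the write-up by leading with the second paragraph.
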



}

Now we apply the newly established generalized residue theorem \high{for $L^2$-functions in Theorem \ref{th:generalized_residue_l2}} to the integral in \eqref{eq:LAP} on the closed curve $\partial B(0,\exp(-\tau))$.  
Recall the integral representation of $u_0$, i.e.,
\begin{equation*}
 u_0(x_1+n,x_2)=\frac{1}{2\pi\i}\oint_{|z|=\exp(-\tau)}w(z,x)z^{n-1}\d z,\quad\forall\, n\geq 1.
\end{equation*}
First, we need to know the distribution of poles. From previous sections, the set $RS$ lies in $B(0,\exp(-\tau))$.
As $z_j^+\rightarrow 0$ as $j\rightarrow\infty$, $0$ is the only accumulation point of the poles. From Theorem \ref{th:generalized_residue_l2}, we only need to find a strictly decreasing series $\{r_\ell\}_{\ell=1}^\infty$ \high{satisfying} that $r_\ell\rightarrow 0$, such that the integral
\begin{equation*}
I_n(r_\ell):=\oint_{|z|=r_\ell}w(z,x)z^{n-1}\d z\rightarrow 0\high{\,\text{ in }L^2(\Omega_0) \text{, as }} \ell\rightarrow\infty,
\end{equation*}
\high{for any $n\geq 1$. }
Before the estimation of $I_n(r_\ell)$, we need a classical Minkowski integral inequality.

\begin{lemma}[Theorem 202, \cite{Hardy1988}]\label{th:minkowski}
Suppose $(S_1,\mu_1)$ and $(S_2,\mu_2)$ are two measure spaces and $F:\,S_1\times S_2\rightarrow\R$ is measurable. Then the following inequality holds for any $p\geq 1$
\begin{equation*}
\left[\int_{S_2}\left|\int_{S_1}F(y,z)\d \mu_1(y)\right|^p \d\mu_2(z)\right]^{1/p}\leq \int_{S_1}\left(\int_{S_\high{2}}|F(y,z)|^p\d\mu_2(z)\right)^{1/p}\d\mu_1(y).
\end{equation*}
\end{lemma}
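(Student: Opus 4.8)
The plan is to give the classical proof of the inequality via Hölder's inequality and Tonelli's theorem, which needs none of the machinery developed earlier in the paper. First I would reduce to the case of nonnegative $F$: since $\left|\int_{S_1}F(y,z)\,\mathrm{d}\mu_1(y)\right|\le\int_{S_1}|F(y,z)|\,\mathrm{d}\mu_1(y)$ for each fixed $z$, and the right-hand side of the asserted inequality is unchanged when $F$ is replaced by $|F|$, it suffices to treat measurable $F\ge 0$. The borderline exponent $p=1$ is then immediate from Tonelli's theorem, since both sides equal $\int_{S_1}\int_{S_2}F(y,z)\,\mathrm{d}\mu_2(z)\,\mathrm{d}\mu_1(y)$; so from here on I assume $p>1$ and let $q=p/(p-1)$ be the conjugate exponent.

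Next, set $G(z):=\int_{S_1}F(y,z)\,\mathrm{d}\mu_1(y)$ and examine $\int_{S_2}G(z)^p\,\mathrm{d}\mu_2(z)$. Writing $G(z)^p=G(z)^{p-1}\int_{S_1}F(y,z)\,\mathrm{d}\mu_1(y)$ and interchanging the order of integration (legitimate by Tonelli, everything being nonnegative) gives
\begin{equation*}
\int_{S_2}G(z)^p\,\mathrm{d}\mu_2(z)=\int_{S_1}\left(\int_{S_2}G(z)^{p-1}F(y,z)\,\mathrm{d}\mu_2(z)\right)\mathrm{d}\mu_1(y).
\end{equation*}
Bounding the inner integral by Hölder's inequality with exponents $q$ and $p$ and using $(p-1)q=p$, one gets
\begin{equation*}
\int_{S_2}G(z)^p\,\mathrm{d}\mu_2(z)\le\left(\int_{S_2}G(z)^p\,\mathrm{d}\mu_2(z)\right)^{1/q}\int_{S_1}\left(\int_{S_2}F(y,z)^p\,\mathrm{d}\mu_2(z)\right)^{1/p}\mathrm{d}\mu_1(y).
\end{equation*}
If $0<\int_{S_2}G^p\,\mathrm{d}\mu_2<\infty$, I divide by its power $1/q$ and use $1-1/q=1/p$ to obtain exactly the claimed bound; the case $\int_{S_2}G^p\,\mathrm{d}\mu_2=0$ is trivial.

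The only delicate point — and the main (minor) obstacle — is that a priori $\int_{S_2}G^p\,\mathrm{d}\mu_2$ could be infinite, which would invalidate the division step. I would circumvent this by a truncation/exhaustion argument: apply the estimate just derived to $F\,\mathbf{1}_{A\times B}$ for measurable $A\subset S_1$ and $B\subset S_2$ of finite measure on which $F$ is bounded, so that the corresponding integral of $G^p$ is finite and the division is valid, giving the bound with right-hand side at most $\int_{S_1}\left(\int_{S_2}F^p\,\mathrm{d}\mu_2\right)^{1/p}\mathrm{d}\mu_1$; then letting $A\uparrow S_1$, $B\uparrow S_2$ and removing the cap, the monotone convergence theorem promotes this to the full statement. (In every application of this lemma in the present paper, $F$ is continuous in $z$ on a compact circle and $L^2$ in $x$, so all quantities involved are finite and this last step is vacuous; alternatively one could invoke the duality characterization of the $L^p(S_2)$-norm and Tonelli, which bypasses the finiteness issue entirely.)
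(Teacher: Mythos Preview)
Your proof is correct and is the standard textbook argument for Minkowski's integral inequality. Note, however, that the paper does not prove this lemma at all: it is simply quoted as Theorem~202 from \cite{Hardy1988} and used as a black box in the estimates of Lemma~\ref{th:int_lm} and Section~8.3. So there is no ``paper's own proof'' to compare against; you have supplied a self-contained proof where the paper relies on the reference. One small remark: the truncation step (choosing $A\uparrow S_1$, $B\uparrow S_2$) tacitly uses $\sigma$-finiteness of the measure spaces, which the lemma as stated does not assume --- but as you correctly observe, in every use within the paper the spaces are a compact circle and a bounded rectangle, so this is immaterial.
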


\begin{figure}[ht]
\centering
\includegraphics[width=8cm]{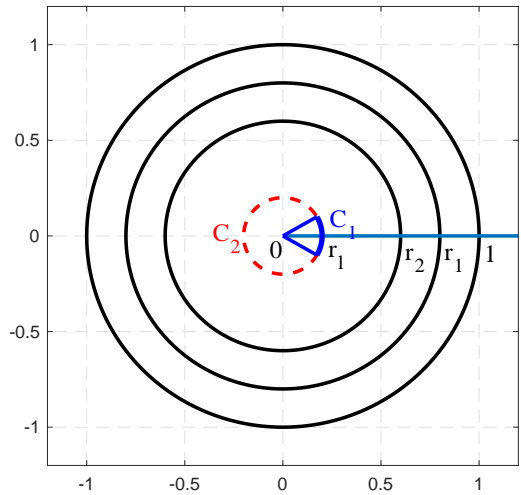}
\caption{Example of the choice of $r_\ell$. For fixed $\ell\in\N$, the blue solid curve is $\mathcal{C}_1^\ell$, the red dotted curve is $\mathcal{C}_2^\ell$}.
\label{circle}
\end{figure}

\begin{lemma}\label{th:int_lm}
There is a series $\{r_\ell\}_{\ell\in\N}$ \high{satisfying} $r_1>r_2>\cdots>r_\ell>\cdots>0$ and $\lim_{n\rightarrow\infty}r_n=0$ such that the integral $I_n(r_\ell)\rightarrow 0$ as $\ell\rightarrow\infty$ \high{in $L^2(\Omega_0)$,} for any $n\geq 1$.
\end{lemma}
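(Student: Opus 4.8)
The goal is to exhibit a sequence of radii $r_\ell \searrow 0$ on which the $L^2(\Omega_0)$-norm of the contour integral $I_n(r_\ell)$ tends to $0$. The natural strategy is an averaging (pigeonhole) argument: rather than control $\|I_n(r)\|_{L^2(\Omega_0)}$ for \emph{every} small $r$, I would integrate a suitable upper bound for $\|I_n(r)\|_{L^2(\Omega_0)}$ over an annulus $r\in(a_{\ell+1},a_\ell)$ with $a_\ell\to 0$, show the total is finite (or at least that the averaged quantity is summable/bounded), and then extract, for each $\ell$, a radius $r_\ell\in(a_{\ell+1},a_\ell)$ on which the integrand is no larger than its average. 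Since the averaging intervals shrink to $0$, the resulting $r_\ell$ form a strictly decreasing null sequence, and one also gets for free that $w(z,\cdot)$ has no pole on $|z|=r_\ell$ for a.e.\ choice (the poles $RS$ being a discrete set with only accumulation point $0$).

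\textbf{Key steps, in order.} First, I would write $w(z,\cdot)$ near $z=0$ using the analytic-Fredholm representation from Theorem~\ref{th:z-eq} together with the conjugation $\B_z=\zeta_z\K_z\zeta_z^{-1}$: on $|z|=r$ the relevant object is $(I-\K_z)^{-1}$ applied to $\zeta_z^{-1}\widetilde f$, and one needs a bound on $\|w(z,\cdot)\|_{L^2(\Omega_0)}$ as $z\to 0$. The factor $z^{n-1}$ in $I_n(r_\ell)$ is what makes things work for $n\geq 1$: on $|z|=r$ we have $|z^{n-1}|=r^{n-1}$, and the arc length is $2\pi r$, so by Lemma~\ref{th:minkowski} (Minkowski's integral inequality, applied with $S_1$ the circle $|z|=r$ and $S_2=\Omega_0$)
\[
\|I_n(r)\|_{L^2(\Omega_0)}\le \oint_{|z|=r}\|w(z,\cdot)\|_{L^2(\Omega_0)}\,|z|^{n-1}\,|\d z|
= r^{n-1}\oint_{|z|=r}\|w(z,\cdot)\|_{L^2(\Omega_0)}\,|\d z|.
\]
So it suffices, even for $n=1$, to control $\oint_{|z|=r}\|w(z,\cdot)\|_{L^2(\Omega_0)}\,|\d z|$ along a sequence $r_\ell\to 0$. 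Second, I would obtain a growth estimate of the form $\|w(z,\cdot)\|_{L^2(\Omega_0)}\le C|z|^{-\beta}$ (or something integrable against $|\d z|$ over an annulus, e.g.\ a bound whose integral over $|z|\in(a_{\ell+1},a_\ell)$ is finite) — this is where the meromorphic structure of $(I-\K_z)^{-1}$ near $0$, and the smoothing effect of $\zeta_z^{-1}$ on $\widetilde f$ (recall $\widetilde f\in H^1_z(\Omega_0)$, so $\zeta_z^{-1}\widetilde f$ stays bounded as $z\to 0$), must be used. Third, the pigeonhole: choose $a_\ell=e^{-\ell}$, say, pick $r_\ell\in(a_{\ell+1},a_\ell)$ realizing (up to a factor $2$) the annular average of $\|w(z,\cdot)\|_{L^2(\Omega_0)}$, and avoiding the discrete pole set $RS$; then $r_\ell^{n-1}\oint_{|z|=r_\ell}\|w\|\,|\d z|\to 0$ because the integrand is being sampled where it is small while $r_\ell\to 0$.

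\textbf{The main obstacle.} The delicate point is the second step: establishing a quantitative bound on $\|w(z,\cdot)\|_{L^2(\Omega_0)}$ as $z\to 0$ that is good enough to be integrable (or averageable) over shrinking annuli. The operator $\K_z$ and hence $(I-\K_z)^{-1}$ blow up as $z\to 0$ because $\log^2(z)$ appears in the sesquilinear form, and $\zeta_z^{-1}=z^{-x_1}$ with $x_1\in(0,1]$ contributes an $|z|^{-1}$-type growth; one must check these are tamed by the $z^{n-1}$ prefactor and by choosing $r_\ell$ cleverly within each annulus rather than uniformly. An alternative that sidesteps sharp estimates is to argue by contradiction/compactness: if no such sequence existed, then $\liminf_{r\to 0}\|I_n(r)\|_{L^2(\Omega_0)}>0$ for some $n$, and one derives a contradiction with the known membership $u_0(\cdot+n,\cdot)=\F^{-1}$-type object lying in $H^1(\Omega_0)$ uniformly in $n$ (Theorem~\ref{th:stab}) together with the decomposition \eqref{eq:LAP}; but I expect the direct averaging argument above to be cleaner. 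I would present the direct argument, flagging the growth estimate on $\|w(z,\cdot)\|_{L^2(\Omega_0)}$ as the technical heart.
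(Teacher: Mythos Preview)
Your framework is right at the coarse level---Minkowski's inequality (the paper's Lemma~\ref{th:minkowski}) reduces the question to controlling $\|w(z,\cdot)\|_{L^2(\Omega_0)}$ on circles $|z|=r_\ell$---and the paper proceeds exactly this way. But the proposal has a genuine gap at the step you yourself flag as the obstacle.

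\medskip

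The power-law bound $\|w(z,\cdot)\|_{L^2(\Omega_0)}\le C|z|^{-\beta}$ is false: $w(z,\cdot)$ has poles precisely at the Floquet multipliers in $RS=\{z_{N+1}^+,z_{N+2}^+,\dots\}$, which accumulate at $0$, so no uniform estimate of this type can hold. Your pigeonhole fallback---integrating $\|I_n(r)\|$ over an annulus and extracting a good radius---does not obviously survive this either: near a pole $z_j^+$ of order $M_j\ge 2$ one has $\oint_{|z|=r}\|w(z,\cdot)\|\,|\d z|\gtrsim |r-|z_j^+||^{-(M_j-1)}$, which need not be integrable in $r$; and even if all poles were simple, you would still need to sum infinitely many non-integrable contributions without any a priori control on pole locations or residue sizes. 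The compactness alternative at the end also does not close: the value of $I_1(r)$ for small $r$ differs from $u_0|_{\Omega_1}$ by the residues at all poles in the annulus $r<|z|<e^{-\tau}$, so $u_0\in H^1(\Omega_0)$ says nothing about $\liminf_{r\to 0}\|I_1(r)\|$.

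\medskip

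The paper's proof sidesteps all of this by a concrete Fourier computation that avoids the actual (unknown, $q$-dependent) poles entirely. One writes the periodic unknown $v_z=\zeta_z^{-1}w(z,\cdot)$ as a solution of the constant-coefficient operator $\Delta+2\log(z)\partial_{x_1}+\log^2(z)$ with right-hand side $z^{-x_1}f-k^2qv_z$, expands in $\exp(2\pi\i jx_1)\cos(\pi m x_2)$, and bounds the explicit symbol
\[
c_{j,m,z}=\big(-4\pi^2j^2-\pi^2m^2-2\theta\pi j+\log^2 r-\theta^2\big)+2\i(\pi j+\theta)\log r,\qquad z=re^{\i\theta},
\]
from below; the $k^2qv_z$ term is then absorbed on the left. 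The imaginary part handles all $z$ with $|\theta|\ge\epsilon$, but on the positive real axis ($\theta\approx 0$, $j=0$) the symbol degenerates when $\log^2 r=\pi^2 m^2$. This forces an \emph{explicit} choice of radii, $r_\ell=\exp\!\big(-\pi\sqrt{(\ell^2+(\ell+1)^2)/2}\big)$, placed exactly midway between consecutive resonances, together with an adaptive arc width $\epsilon_\ell\sim\ell^{-1/2}$ so that both $\epsilon_\ell\to 0$ and $|\log r_\ell|\,\epsilon_\ell\to\infty$. This explicit construction is the missing idea in your sketch; a generic averaging argument does not see the arithmetic of the cross-section eigenvalues and will not produce it.
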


\begin{proof}

Let $w(z,\cdot)$ be the solution of the quasi-periodic problem \eqref{eq:quasi1}-\eqref{eq:quasi2}, then  $v_z(x):=\zeta_z^{-1}w(z,x)\in H^1_\p(\Omega_0)$ satisfies
\begin{eqnarray*}
\Delta v_z+2\log(z)\frac{\partial v_z}{\partial x_1}+\log^2(z) v_z&=&g_z\,(:=z^{-x_1}f-k^2 qv_z)\quad \text{ in }\Omega_0;\\
\frac{\partial v_z}{\partial x_2}&=&0\quad\text{ on }\partial \Omega_0.
\end{eqnarray*}

 Let $v_z$ and $g_z$ be written as expansions of the eigenfunctions with homogeneous Neumann boundary conditions, i.e.,
\begin{eqnarray*}
&& v_z(x_1,x_2)=\sum_{j\in\Z}\sum_{m\in\N}v_z^{j,m}\exp(2\i\pi j x_1)\cos(\pi m x_2);\\
&&g_z(x_1,x_2)=\sum_{j\in\Z}\sum_{m\in\N}g_z^{j,m}\exp(2\i\pi j x_1)\cos(\pi m x_2).
\end{eqnarray*}
\high{Using} the Fourier expansions and equation, we get the equation of the coefficients
\begin{equation*}
\left[-4\pi^2 j^2-\pi^2 m^2+2\i\log(z)\pi j+\log^2(z)\right]v_z^{j,m}=g_z^{j,m}.
\end{equation*}
Let $z=r e^{\i\theta}$ when $r>0$ is small enough and $\theta\in[-\pi,\pi]$ (it could be replaced by any interval with length $2\pi$), then $|z|=r$ and $\log(z)=\log(r)+\i\theta$. 
\begin{equation*}
c_{j,m,z}v_z^{j,m}=g_z^{j,m},
\end{equation*}
where the coefficient is defined by
\begin{equation*}
c_{j,m,z}:=\left(-4\pi^2 j^2-\pi^2m^2-2\theta\pi j+\log^2(r)-\theta^2\right)+2\i\left(\pi j+\theta\right)\log(r).
\end{equation*}

1) 
First we consider the case $|\theta|\geq \epsilon>0$ for some small enough $\epsilon>0$.  Then for any $j\in\Z$ and $m\in\N$, the coefficient
\begin{equation*}
|c_{j,m,z}|\geq |\Im(c_{j,m,z})|=2|\log(r)||\pi j+\theta|\geq 2\epsilon|\log(r)|.
\end{equation*}
Thus  $2 \epsilon|\log(r)||v_z^{j,m}|\leq|g_z^{j,m}|$, implies that
\begin{equation*}
2 \epsilon|\log(r)|\|v_z\|_{L^2(\Omega_0)}\leq \|g\|_{L^2(\Omega_0)}\leq \|z^{-x_1}f\|_{L^2(\Omega_0)}+k^2\|q\|_\infty\|v_z\|_{L^2(\Omega_0)}.
\end{equation*}
In this case, when \high{$r$ is small enough, i.e.,} $2\epsilon |\log(r)|>k^2\|q\|_\infty$,
\begin{equation*}
\|v_z\|_{L^2(\Omega_0)}\leq \frac{\left\|z^{-x_1}f\right\|_{L^2(\Omega_0)}}{2\epsilon|\log(r)|-k^2\|q\|_\infty}\leq \frac{\left\|z^{-x_1}\right\|_{L^\infty(\Omega_0)}\|f\|_{L^2(\Omega_0)}}{2\epsilon|\log(r)|-k^2\|q\|_\infty}\leq \frac{r^{-1/2}\|f\|_{L^2(\Omega_0)}}{2\epsilon|\log(r)|-k^2\|q\|_\infty}.
\end{equation*} 
Thus 
\begin{equation}\label{eq:int_est_1}
\|w(z,\cdot)\|_{L^2(\Omega_0)}=\left\|z^{x_1}v_z\right\|_{L^2(\Omega_0)}\leq\left\|z^{x_1}\|_{L^\infty(\Omega_0)}\|v_z\right\|_{L^2(\Omega_0)}\leq \frac{\|f\|_{L^2(\Omega_0)}}{r(2\epsilon|\log(r)|-k^2\|q\|_\infty)}.
\end{equation}

2) 
Second we consider the case $\theta\in(-\epsilon,\epsilon)$. When $j\neq 0$, the coefficient
\begin{equation*}
|c_{j,m,z}|\geq |\Im(c_{j,m,z})|=2|\log(r)||\pi j+\theta|\geq \pi |\log(r)|.
\end{equation*}
When $j=0$, then
\begin{equation*}
|c_{j,m,z}|\geq|\Re(c_{j,m,z})|=\left|\log^2(r)-\pi^2 m^2-\theta^2\right|\geq \left|\log^2(r)-\pi^2 m^2\right|-\epsilon^2.
\end{equation*}
\high{For any $\ell\in\N$, we} choose $r_\ell=\exp\left(-\pi\sqrt{\frac{\ell^2+{(\ell+1)}^2}{2}}\right)$, then we have the following estimations:
\begin{eqnarray*}
&& \pi|\log(r)|\geq \pi^2 \sqrt{\frac{\ell^2+(\ell+1)^2}{2}}\geq \pi^2\ell;\\
&&\left|\log^2(r_\ell)-\pi^2 m^2\right|\geq \left|\log^2(r_\ell)-\pi^2\ell^2\right|=\left|\log^2(r_\ell)-\pi^2{(\ell+1)^2}\right|=\frac{(2\ell+1)\pi^2}{2}. 
\end{eqnarray*}
Thus when $\epsilon$ is small enough, for both cases when $j\neq 0$ or $j=0$,
\begin{equation*}
|c_{j,m,z}|\geq\left|\log^2(r_\ell)-\pi^2 m^2-\theta^2\right|\geq \pi^2\ell.
\end{equation*}
With the same technique, we can prove that when $\pi^2\ell>k^2\|q\|_\infty$,
\begin{equation}\label{eq:int_est_2}
\|w(z,\cdot)\|_{L^2(\Omega_0)}=\|z^{x_1}v_z\|_{L^2(\Omega_0)}\leq \frac{\|f\|_{L^2(\Omega_0)}}{r(\pi^2\ell-k^2\|q\|_\infty)}.
\end{equation}

Now we are prepared to consider the integral $I(r_\ell)$. Let $\epsilon_\ell:=\left({\ell^2+(\ell+1)^2}\right)^{-1/4}$, then
\begin{equation*}
r_\ell\rightarrow0,\quad \epsilon_\ell\rightarrow 0,\quad |\log(r_\ell)|\epsilon_\ell\rightarrow \infty,\quad \text{ as }\ell\rightarrow\infty.
\end{equation*}
Define $\mathcal{C}_1^\ell=\{r_\ell\exp(\i\theta):\,|\theta|\leq\epsilon_\ell\}$ and $\mathcal{C}_2^\ell=\{r_\ell\exp(\i\theta):\,\epsilon_\ell<\theta<2\pi-\epsilon_\ell\}$, then $\mathcal{C}_1^\ell\cup\mathcal{C}_2^\ell=\partial B(0,r_\ell)$. We can write the integral $I(r_\ell)$ as two parts (see Figure \ref{circle}):
\begin{equation*}
\begin{aligned}
\oint_{\partial B(0,r_\ell)}w(z,x)z^{n-1}\d z                    
=&\int_{\mathcal{C}_1^\ell}w(z,x)z^{n-1}\d z+\int_{\mathcal{C}_2^\ell}w(z,x)z^{n-1}\d z\\
=&\i\int_{-\epsilon_\ell}^{\epsilon_\ell} w
\left(r_\ell e^{\i\theta},x\right)r_\ell^n e^{\i n\theta}\d\theta+\i\int_{\epsilon_\ell}^{2\pi-\epsilon_\ell} w\left(r_\ell e^{\i\theta},x\right)r_\ell^n e^{\i n\theta}\d\theta.
\end{aligned}
\end{equation*}

From \eqref{eq:int_est_2}, using Lemma \ref{th:minkowski}, the $L^2(\Omega_0)$-norm of the first term is bounded by
\begin{equation*}
\begin{aligned}
\left\|\i\int_{-\epsilon_\ell}^{\epsilon_\ell} w\left(r_\ell e^{\i\theta},x\right)r_\ell^n e^{\i n\theta}\d\theta\right\|_{L^2(\Omega_0)}&=\left(\int_{\Omega_0}\left|\int_{-\epsilon_\ell}^{\epsilon_\ell}w\left(r_\ell e^{\i\theta},x\right)r_\ell^n e^{\i n\theta}\d\theta\right|^2\d x\right)^{1/2}\\
&\leq \int_{-\epsilon_\ell}^{\epsilon_\ell}\left(\int_{\Omega_0}\left|w\left(r_\ell e^{\i\theta},x\right)r_\ell^n e^{\i n\theta}\right|^2\d x\right)^{1/2}\d\theta\\
&\leq \frac{2 r_\ell^{n-1}\epsilon_\ell}{\pi^2\ell-k^2\|q\|_\infty}\|f\|_{L^2(\Omega_0)}\rightarrow 0
\end{aligned}
\end{equation*}
 as $\ell\rightarrow\infty$, with any fixed integer $n\geq 1$.

For the second term, from \eqref{eq:int_est_1} and Lemma \ref{th:minkowski} again,
\begin{equation*}\left\|\i\int_{\epsilon}^{2\pi-\epsilon} w\left(r_\ell e^{\i\theta},x\right)r_\ell^n e^{\i n\theta}\d\theta\right\|_{L^2(\Omega_0)}
\leq \frac{2\pi r_\ell^{n-1}}{2|\log(r_\ell)\epsilon_\ell|-k^2\|q\|_\infty}\|f\|_{L^2(\Omega_0)}\rightarrow 0
\end{equation*}
 as $\ell\rightarrow\infty$.  Thus
 \begin{equation*}
 \oint_{\partial B(0,r_\ell)}w(z,x)z^{n-1}\d z\rightarrow 0\,\text{ in }L^2(\Omega_0),\quad \ell\rightarrow\infty.
 \end{equation*}
 The proof is finished.
 
\end{proof}

With the above results, we have finally  arrived at the following theorem.

\begin{theorem}\label{th:decomp}
Suppose $RS=\{z_{N+1}^+,z_{N+2}^+,\dots\}$. Then for $n\geq 1$,
\begin{equation}\label{eq:decomp1}
\begin{aligned}
 u(x_1+n,x_2)&=\sum_{j=N+1}^\infty{\rm Res}\left(w(z,x)z^{n-1},z=z_j^+\right)\\
 &+\sum_{j=1}^N\frac{(z_j^+)^{n}\left<q^{-1}f,\psi^+_j(-\i\log(z_j^+),\cdot)\right>}{(\mu^+_j)'(-\i\log(z_j^+))}\psi_j^+(-\i\log(z_j^+),x).
 \end{aligned}
\end{equation}
\end{theorem}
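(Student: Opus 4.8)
The plan is to combine the limiting absorption representation \eqref{eq:LAP} with the generalized residue theorem for $L^2$-valued functions (Theorem \ref{th:generalized_residue_l2}). Recall that \eqref{eq:LAP} already writes, for $n\geq 1$,
\[
u(x_1+n,x_2) = \frac{1}{2\pi\i}\oint_{|z|=\exp(-\tau)}w(z,x)z^{n-1}\d z + \sum_{j=1}^N\frac{\i(z_j^+)^n\left<q^{-1}f,\psi_j^+(-\i\log(z_j^+),\cdot)\right>}{(\mu_j^+)'(-\i\log(z_j^+))}\psi_j^+(-\i\log(z_j^+),x),
\]
so only the contour integral term needs to be converted into a sum of residues. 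Thus the first step is to identify the data entering Theorem \ref{th:generalized_residue_l2}: take $\D = B(0,\exp(-\tau))$ (simply connected, open), $z_0 = 0$, and $f(z,x) = w(z,x)z^{n-1}$. By Theorem \ref{th:solution_z_uc}, $w(z,\cdot)$ is meromorphic on $\C\setminus\{0\}$ with poles exactly on $\mathbb{F}$, and by the choice of $\tau$ (Lemma \ref{th:curve_z_inside} and the discussion around \eqref{eq:dist_pole}) the only poles inside $B(0,\exp(-\tau))$ are precisely the elements of $RS=\{z_j^+:\,j\geq N+1\}$, whose unique accumulation point is $0$ (Property b), and the remark after Theorem \ref{th:solution_z_uc}). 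Multiplication by $z^{n-1}$ is analytic and does not add poles for $n\geq 1$, so $f(z,\cdot)$ has the pole structure required by the theorem.

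The second step is to verify the decay hypothesis on the small circles. This is exactly the content of Lemma \ref{th:int_lm}: it produces a strictly decreasing sequence $r_\ell\to 0$ (with $r_\ell = \exp(-\pi\sqrt{(\ell^2+(\ell+1)^2)/2})$, chosen so the circles avoid the relevant resonances) along which $I_n(r_\ell)=\oint_{|z|=r_\ell}w(z,x)z^{n-1}\d z \to 0$ in $L^2(\Omega_0)$ for every fixed $n\geq 1$. Hence $\frac{1}{2\pi\i}\oint_{|z|=r_\ell}f(z,\cdot)\d z\to 0$ in $L^2(\Omega_0)$, which is the remaining hypothesis of Theorem \ref{th:generalized_residue_l2} with $\D_0 = B(0,\exp(-\tau))$ (note $\overline{\D_0}\subset\C\setminus\{0\}$ is not needed since the theorem only asks $\overline{\D_0}\subset\D$, and here one can enlarge $\D$ slightly past $\exp(-\tau)$ because no poles of $\mathbb{F}$ sit on that circle).

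Applying Theorem \ref{th:generalized_residue_l2} then gives, in $L^2(\Omega_0)$ (hence a.e.\ in $\Omega_0$),
\[
\frac{1}{2\pi\i}\oint_{|z|=\exp(-\tau)}w(z,x)z^{n-1}\d z = \sum_{j\in\N:\,z_j^+\in B(0,\exp(-\tau))}{\rm Res}\left(w(z,x)z^{n-1},z=z_j^+\right) = \sum_{j=N+1}^\infty{\rm Res}\left(w(z,x)z^{n-1},z=z_j^+\right),
\]
using that the poles inside the disk are exactly $z_{N+1}^+,z_{N+2}^+,\dots$. Substituting this into \eqref{eq:LAP} yields \eqref{eq:decomp1} once one absorbs the factor $\i$ into the residue notation (the explicit sum in \eqref{eq:LAP} for $j=1,\dots,N$ can equally be written $\frac{\i(z_j^+)^n\langle\cdot\rangle}{(\mu_j^+)'}\psi_j^+$, matching the stated form up to the harmless constant $\i$ bookkeeping already fixed in the earlier computation). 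The main obstacle is not in this concatenation step, which is essentially bookkeeping, but rather sits in the already-established Lemma \ref{th:int_lm}: the delicate point is the uniform lower bound on the Fourier coefficient multipliers $c_{j,m,z}$ split according to whether $\arg z$ is near $0$ or not, and the clever choice of radii $r_\ell$ making $|\log^2(r_\ell)-\pi^2 m^2|$ bounded below uniformly in $m$. Given that lemma and the generalized residue theorem, the proof of Theorem \ref{th:decomp} is just the application described above.
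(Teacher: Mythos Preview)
Your proposal is correct and follows the same approach as the paper: the paper's own proof is a one-line appeal to Theorem \ref{th:stab} (yielding \eqref{eq:LAP}), the generalized residue theorem, and Lemma \ref{th:int_lm}, and you have spelled out exactly that concatenation, including the needed technical observation about enlarging $\D$ slightly so that $\overline{\D_0}\subset\D$. The discrepancy in the factor $\i$ between \eqref{eq:LAP} and \eqref{eq:decomp1} that you flag is a typo in the paper itself, not an issue with your argument.
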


\begin{proof}

The proof of \eqref{eq:decomp1} comes directly from Theorem \ref{th:stab}, \ref{th:generalized_residue}, and Lemma \ref{th:int_lm}. 
\end{proof}

\high{
\subsection{Eigenvalue decomposition of the operator $\mathcal{R}$}

From the last subsection, the solution $u_0$ in $\cup_{n=1}^\infty\Omega_n$ has been written as a linear combination of residues at poles $z_j^+$ for $j\geq N+1$. To study the spectrum decomposition of $\mathcal{R}$, we fix one residue ${\rm Res}(w(z,x),z_j^+)$ where $w(z,\cdot)\in H^1_z(\Omega_0)$ is the solution of \eqref{eq:wg_z1}-\eqref{eq:wg_z2}, and study the property of this function.  For simplicity, define the following operator when $j=N+1,N+2,\dots$:
 \begin{eqnarray*}
 \L_j: \mathcal{P}({\UD}, L^2(\Omega_0))&\rightarrow &H^1(\Omega_0)\\
 g(z,x)=\sum_{\ell=1}^N p_\ell(z)f_\ell(x)&\mapsto&\frac{1}{2\pi\i}\oint_{|z-z_j^+|=\delta_j}(I-\B_z)^{-1}\left[\sum_{\ell=1}^N p_\ell(z)\widetilde{f}_\ell(x)\right] \d z
\end{eqnarray*}
where $p_\ell$ is a polynomial, $\widetilde{f}_\ell\in H^1(\Omega_0)$ is obtained from \eqref{eq:tilde_f}, $\UD:=\{z\in\C:\,|z|\leq 1\}$ is the closed unit disk and the space $\mathcal{P}(\UD, L^2(\Omega_0))$ is defined by
\begin{equation*}
\mathcal{P}(\UD, L^2(\Omega_0)):=\left\{g(z,x)=\sum_{\ell=1}^N p_\ell(z)f_\ell(x):\,N\in\N_+,\,p_\ell(z)\text{ is a polynomial, }f_\ell\in L^2(\Omega_0)\right\}.
\end{equation*}
Note that $RS=\left\{z_j^+:\,j=N+1,N+2,\dots\right\}\subset\UD$. As the elements in this space are linear combinations of productions of polynomials and $L^2$-functions, for any fixed $z\in\UD$, $g(z,\cdot)$ is a well-defined function in $L^2(\Omega_0)$, thus we define the norm in this space by
\begin{equation*}
 \big\|g(z,x)\big\|_{\mathcal{P}(\UD, L^2(\Omega_0))}=\sup_{z\in\UD}\big\|g(z,\cdot)\big\|_{L^2(\Omega_0)}.
\end{equation*}
Then $\L_j$ is bounded from $\mathcal{P}(\UD, L^2(\Omega_0))$ to $H^1(\Omega_0)$. We define the following space
\begin{equation*}
\E_{j}:=\Big\{\L_j (g(z,x)):\, g\in\mathcal{P}(\UD,L^2(\Omega_0))\Big\}\subset H^1(\Omega_0),
\end{equation*}
thus
\begin{equation*}
 \E_{j}=\left\{\sum_{\ell=1}^N{\rm Res}\Big(p_\ell(z)w_\ell(z,x),z=z_j^+\Big):\,w_\ell(z,x)\in H_z^1(\Omega_0) \text{ is the solution of \eqref{eq:wg_z1}-\eqref{eq:wg_z2} with source term $f_\ell$}\right\}.
\end{equation*}
From Theorem \ref{th:ana_fred_thy}, $\E_j$ is a finite dimensional space. 
Let the trace operator $\Upsilon:\, H^1(\Omega_0)\mapsto H^{1/2}(\Gamma_1)$, and $\Upsilon\E_j$ be the space of all the \high{traces on the boundary $\Gamma_1$ of functions in $\E_j$}.


Then we study the property of the space $\E_j$. As $w(z,x)\big|_{\Gamma_{j+1}}=z w(z,x)\big|_{\Gamma_j}$,
\[\mathcal{R}w(z,x)\Big|_{\Gamma_1}=w(z,x)\Big|_{\Gamma_{2}}=z w(z,x)\Big|_{\Gamma_{1}}.\]
With this property, we have
\begin{equation*}
\begin{aligned}
 \mathcal{R}\Big(\Upsilon\L_j (p(z)f)\Big)&=\left.\left(\frac{1}{2\pi\i}\oint_{|z-z_j^+|=\delta_j} (I-\B_z)^{-1} p(z)\widetilde{f}(x)\d z\right)\right|_{\Gamma_{2}}\\
 &={\rm Res}\left((I-\B_z)^{-1}p(z)\widetilde{f}(x),z=z_j^+\right)\Big|_{\Gamma_{2}}\\
 &=z_j^+{\rm Res}\left((I-\B_z)^{-1}p(z)\widetilde{f}(x),z=z_j^+\right)\Big|_{\Gamma_{1}}\\ &={\rm Res}\left((I-\B_z)^{-1}zp(z)\widetilde{f}(x),z=z_j^+\right)\Big|_{\Gamma_{1}}\\&
 =\Upsilon\L_j\Big(zp(z) f\Big).
 \end{aligned}
\end{equation*}
Denote the $m$-th power of $\mathcal{R}$ by $\mathcal{R}^{m}$, then it is defined by
\begin{equation*}
 \mathcal{R}^{m}:\, u\big|_{\Gamma_1}\mapsto u\big|_{\Gamma_{1+m}}.
\end{equation*}
From similar technique, the following equation is satisfied:
\begin{equation*}
 \mathcal{R}^{m}\Big(\Upsilon\L_jp(z) f\Big)=\Upsilon\L_j\Big(z^m p(z)f\Big).
\end{equation*}

From the linearity of the operators $\mathcal{R}^m$, $\Upsilon$ and $\L_j$
\begin{equation*}
\begin{aligned}
\mathcal{R}^m\Big(\Upsilon\L_j g(z,x)\Big)&=\mathcal{R}^m\left(\Upsilon\L_j \left[\sum_{\ell=1}^N p_\ell(z)f_\ell(x)\right]\right)=\sum_{\ell=1}^N \mathcal{R}^m\Big(\Upsilon\L_j(p_\ell(z)f_\ell(x))\Big)\\
&=\sum_{\ell=1}^N\Upsilon\L_j\Big(z^m p_\ell(z)f_\ell(x)\Big)=\Upsilon\L_j\left(\sum_{\ell=1}^N z^m p_\ell(z)f_\ell(x)\right)=\Upsilon\L_j\Big(z^m g(z,x)\Big).
\end{aligned}
\end{equation*}
The results are extended in the following lemma.

\begin{lemma}\label{th:poly_R}
 Let $\widetilde{p}(z)$ be a  polynomial of $z$ with a positive degree. For any $g(z,x)\in \mathcal{P}\left(\UD,L^2(\Omega_0)\right)$,
 \begin{equation*}
  \widetilde{p}\left(\mathcal{R}\right)\Big(\Upsilon\L_j g(z,x)\Big)=\Upsilon\L_j\Big(\widetilde{p}(z)g(z,x)\Big).
 \end{equation*}
\end{lemma}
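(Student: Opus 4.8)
The plan is to bootstrap from the two identities already established immediately before the statement, namely
\[
 \mathcal{R}^m\bigl(\Upsilon\L_j g(z,x)\bigr)=\Upsilon\L_j\bigl(z^m g(z,x)\bigr),\qquad m\in\N,
\]
valid for every $g\in\mathcal{P}(\UD,L^2(\Omega_0))$. A general polynomial $\widetilde p$ of positive degree can be written as $\widetilde p(t)=\sum_{m=0}^{M} a_m t^m$ with $a_M\neq 0$, and the goal is simply to take the obvious linear combination of these identities. So first I would note that $\widetilde p(\mathcal{R})$ is by definition $\sum_{m=0}^{M} a_m \mathcal{R}^m$ acting on $X=H^{1/2}(\Gamma_1)$ (the constant term $a_0$ giving $a_0 I$), and that $\Upsilon\L_j g(z,x)$ is a well-defined element of $X$ because $\L_j$ maps into $H^1(\Omega_0)$ and $\Upsilon$ is the trace onto $\Gamma_1$.

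The second step is the actual computation. Using linearity of each $\mathcal{R}^m$ and applying the displayed identity termwise,
\[
 \widetilde p(\mathcal{R})\bigl(\Upsilon\L_j g(z,x)\bigr)
 =\sum_{m=0}^{M} a_m\,\mathcal{R}^m\bigl(\Upsilon\L_j g(z,x)\bigr)
 =\sum_{m=0}^{M} a_m\,\Upsilon\L_j\bigl(z^m g(z,x)\bigr).
\]
Now I would pull $\Upsilon$ and $\L_j$ out of the finite sum, which is legitimate because both are linear operators, obtaining
\[
 \widetilde p(\mathcal{R})\bigl(\Upsilon\L_j g(z,x)\bigr)
 =\Upsilon\L_j\!\left(\sum_{m=0}^{M} a_m z^m g(z,x)\right)
 =\Upsilon\L_j\bigl(\widetilde p(z)\,g(z,x)\bigr),
\]
which is exactly the claimed formula.

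The only point that needs a word of care — and I expect it to be the sole (minor) obstacle — is checking that the argument $\widetilde p(z)g(z,x)$ still lies in the domain $\mathcal{P}(\UD,L^2(\Omega_0))$ on which $\L_j$ is defined, so that the right-hand side makes sense. This is immediate from the definition of that space: if $g(z,x)=\sum_{\ell=1}^{N} p_\ell(z) f_\ell(x)$ with each $p_\ell$ a polynomial and $f_\ell\in L^2(\Omega_0)$, then $\widetilde p(z)g(z,x)=\sum_{\ell=1}^{N}\bigl(\widetilde p(z)p_\ell(z)\bigr)f_\ell(x)$, and each $\widetilde p\,p_\ell$ is again a polynomial, so the product belongs to $\mathcal{P}(\UD,L^2(\Omega_0))$; likewise the case $m=0$ is covered since $z^0 g(z,x)=g(z,x)$ is in the space and $\mathcal{R}^0=I$ acts as the identity on $\Upsilon\L_j g$, consistent with the $m=0$ instance of the preceding identities. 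With this verification in place there is nothing further to prove; the statement is a direct linear-algebraic consequence of the $m$-th power identities, and no new analytic input (residue estimates, Floquet–Bloch machinery, etc.) is required.
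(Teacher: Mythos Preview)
Your proof is correct and matches the paper's approach: the paper does not give an explicit proof but simply states that ``the results are extended'' from the preceding identities $\mathcal{R}^m\bigl(\Upsilon\L_j g\bigr)=\Upsilon\L_j\bigl(z^m g\bigr)$, and your argument spells out precisely that extension by linearity together with the closure of $\mathcal{P}(\UD,L^2(\Omega_0))$ under multiplication by polynomials.
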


With this lemma,  we are prepared to study the structure of the set $\E_j$ and the space $X$. Note that the set $\E_j$ is only defined for $j\geq N+1$, first we define the set for $j=1,2,\dots,N$ as:
\[
\E_j:= \big\{a\psi_j^+(-\i\log(z_j^+),\cdot):\,a\in\C\big\},
\]
where $\psi_j^+$ is the eigenfunction defined as in Section 6. Let $\nu^+(j)$ be the Riesz number of $\mathcal{R}-z_j^+I$, then $\nu^+(j)=1$ for $j=1,2,\dots,N$. Now the structures of $\E_j$ and $X$ are concluded in the following theorem.

\begin{theorem}\label{th:generalized_eigenfunction} 
For any integer $j\geq N+1$, $z_j^+\in RS$ is an eigenvalue of $\mathcal{R}$. Moreover, any non-zero element in $\Upsilon\E_j$ is a generalized eigenfunction of $\mathcal{R}$ associated with the eigenvalue $z_j^+$, and $\Upsilon\E_j$ is an invariant subspace of the operator $\mathcal{R}$.

Moreover, we have the following decomposition:
\begin{equation*}
X=\overline{\bigoplus_{j\in\N}\mathcal{N}\left((\mathcal{R}-z_j^+I)^{\nu^+(j)}\right)}
\end{equation*}
and $\mathcal{R}$ has the Jordan normal form in the space $\bigoplus_{j\in\N}\mathcal{N}\left((\mathcal{R}-z_j^+I)^{\nu^+(j)}\right)$, where $\mathcal{N}(P)$ is the kernel of the operator $P$.
\end{theorem}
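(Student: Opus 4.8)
The plan is to establish the three claims about $\Upsilon\E_j$ first, and then assemble the global decomposition of $X$ from them together with Theorem~\ref{th:decomp} and Corollary~\ref{th:U_co}. For the first part, fix $j\geq N+1$ and work with Lemma~\ref{th:poly_R}. Taking $\widetilde p(z)=z-z_j^+$, the key observation is that for any $g\in\mathcal P(\UD,L^2(\Omega_0))$ the function $(z-z_j^+)g(z,x)$ has a pole at $z_j^+$ of order one less than that of $g$ (or is analytic there); since the residue at an analytic point vanishes, iterating $\nu^+(j)$ times — where $\nu^+(j)$ is the order of the pole of $(I-\B_z)^{-1}$ at $z_j^+$, which is finite by the Analytic Fredholm Theorem~\ref{th:ana_fred_thy} and equals the Riesz number of $\mathcal R-z_j^+I$ on $\Upsilon\E_j$ — annihilates $\Upsilon\L_j g$. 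Hence $(\mathcal R-z_j^+I)^{\nu^+(j)}$ kills every element of $\Upsilon\E_j$, while $\mathcal R-z_j^+I$ itself does not (taking $g$ a constant-in-$z$ source produces a genuine eigenfunction, i.e. a nonzero residue with $\mathcal R$-eigenvalue $z_j^+$, since the resolvent genuinely has a pole there). This simultaneously shows $z_j^+$ is an eigenvalue, that every nonzero element of $\Upsilon\E_j$ is a generalized eigenfunction, and — because $\mathcal R(\Upsilon\L_j g)=\Upsilon\L_j(zg)\in\Upsilon\E_j$ — that $\Upsilon\E_j$ is $\mathcal R$-invariant. On the invariant finite-dimensional space $\Upsilon\E_j$ the nilpotent operator $\mathcal R-z_j^+I$ then furnishes the Jordan block structure.

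For the global decomposition, I would argue as follows. By Corollary~\ref{th:U_co}, $X=\mathcal V=\{u(f)|_{\Gamma_1}:f\in L^2(\Omega_0)\}$, so every element of $X$ is the restriction to $\Gamma_1$ of some LAP solution. But $\mathcal R^{n-1}(u(f)|_{\Gamma_1})=u(f)|_{\Gamma_n}$, and Theorem~\ref{th:decomp} expresses $u(f)|_{\Gamma_n}$ (equivalently $u(x_1+n-1,x_2)$ on $\Gamma$) as a sum of residues $\mathrm{Res}(w(z,x)z^{n-1},z_j^+)$ over $j\geq N+1$ plus the $N$ propagating-mode terms $\propto (z_j^+)^n\psi_j^+$. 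Taking $n=1$, this exhibits $u(f)|_{\Gamma_1}$ as an $X$-convergent sum $\sum_{j\in\N}v_j$ with $v_j\in\Upsilon\E_j$ (the $j\le N$ terms landing in the one-dimensional spaces $\E_j$ defined via $\psi_j^+$, for which $\nu^+(j)=1$). Convergence in $X=H^{1/2}(\Gamma)$ follows from the trace theorem applied to the $L^2(\Omega_0)$ (indeed $H^1(\Omega_0)$) convergence furnished by Lemma~\ref{th:int_lm} and Theorem~\ref{th:generalized_residue_l2}, after upgrading the $L^2$-estimates there to $H^1$ via elliptic regularity for the cell equation. Since each $\Upsilon\E_j\subset\mathcal N((\mathcal R-z_j^+I)^{\nu^+(j)})$, this shows $X\subseteq\overline{\bigoplus_j\mathcal N((\mathcal R-z_j^+I)^{\nu^+(j)})}$; the reverse inclusion is trivial, giving equality.

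There remains the directness of the sum: I must check that $\mathcal N((\mathcal R-z_j^+I)^{\nu^+(j)})\cap\overline{\bigoplus_{k\ne j}\mathcal N((\mathcal R-z_k^+I)^{\nu^+(k)})}=\{0\}$ so that the decomposition of each $u(f)|_{\Gamma_1}$ into its components $v_j$ is unique and $\mathcal R$ acts blockwise. For finitely many distinct eigenvalues this is the standard linear-algebra fact that generalized eigenspaces for distinct eigenvalues are independent (here applied to $\mathcal R$, using that $z_j^+\ne z_k^+$ for $j\ne k$ since $\mathbb F$ is discrete); the subtlety is the closure and the accumulation of the $z_j^+$ at $0$. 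I would handle this by a spectral-projection argument: for each fixed $j$, the Riesz projection $P_j=\frac{1}{2\pi\i}\oint_{|z-z_j^+|=\delta_j}(zI-\mathcal R)^{-1}\,dz$ is well-defined because $z_j^+$ is an isolated point of $\sigma(\mathcal R)$ (the nonzero spectrum of the compact operator $\mathcal R$ consists exactly of the isolated eigenvalues $z_j^+$), it commutes with $\mathcal R$, satisfies $P_jP_k=0$ for $j\ne k$, and has range $\mathcal N((\mathcal R-z_j^+I)^{\nu^+(j)})$; applying $P_j$ to the expansion $u(f)|_{\Gamma_1}=\sum_k v_k$ picks out $v_j$ and forces uniqueness, and the boundedness of $P_j$ together with the convergence of the series gives the closed direct-sum decomposition. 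Assembling these pieces yields the stated identity and the Jordan normal form of $\mathcal R$ on $\bigoplus_{j\in\N}\mathcal N((\mathcal R-z_j^+I)^{\nu^+(j)})$.

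The step I expect to be the main obstacle is the directness/uniqueness together with controlling the interaction of the infinitely many generalized eigenspaces as $z_j^+\to0$: making sure the decomposition $u(f)|_{\Gamma_1}=\sum_j v_j$ is not merely \emph{a} convergent expansion but the \emph{unique} one compatible with the block structure, and that the closed span genuinely splits as an (infinite) algebraic-plus-closure direct sum. The Riesz-projection route handles this cleanly provided one knows each $z_j^+$ is isolated in $\sigma(\mathcal R)$ with finite-rank projection — which is exactly what compactness of $\mathcal R$ (stated in Section~2) together with $\dim\E_j<\infty$ (from Theorem~\ref{th:ana_fred_thy}) supplies — so the remaining work is bookkeeping rather than a new idea.
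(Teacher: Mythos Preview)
Your approach is essentially the paper's: use Lemma~\ref{th:poly_R} with $\widetilde p(z)=(z-z_j^+)^M$ to kill $\Upsilon\L_j g$, invoke the Bloch mode to see $z_j^+$ is an eigenvalue, and then combine Corollary~\ref{th:U_co} with the residue decomposition of Theorem~\ref{th:decomp} to get density of $\bigoplus_j\Upsilon\E_j$ in $X$. Two small corrections. First, you identify the pole order of $(I-\B_z)^{-1}$ at $z_j^+$ with the Riesz number $\nu^+(j)$; the paper does not claim this and it is not needed: one takes $M$ to be the pole order, obtains $\Upsilon\E_j\subset\mathcal N((\mathcal R-z_j^+I)^M)$, and then simply uses $\mathcal N((\mathcal R-z_j^+I)^M)\subset\mathcal N((\mathcal R-z_j^+I)^{\nu^+(j)})$. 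Second, a constant-in-$z$ source does \emph{not} in general produce an eigenfunction when $M>1$, only a generalized one; the paper instead notes that $z_j^+\in\mathbb F$ already supplies a Bloch wave $\psi$ with $\mathcal R(\psi|_{\Gamma_1})=z_j^+\psi|_{\Gamma_1}$, which is the cleaner route (alternatively, take $g=(z-z_j^+)^{M-1}f$ to extract the leading Laurent coefficient).

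On directness, the paper is terse: it simply appeals to linear independence of generalized eigenspaces for distinct eigenvalues and writes $X=\overline{\bigoplus_j\Upsilon\E_j}$. Your Riesz-projection argument, using compactness of $\mathcal R$ to isolate each $z_j^+$ in $\sigma(\mathcal R)$, is a legitimate and more explicit way to secure the direct-sum structure; it is not in the paper but adds rigor rather than changing the strategy. Your remark that the $L^2$ convergence in Lemma~\ref{th:int_lm} must be upgraded to $H^1$ before taking traces is well taken---the paper is equally loose on this point---and elliptic regularity on the cell is indeed the right tool.
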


\begin{proof}
As $z_j^+\in\mathbb{F}$ for any $j\geq N+1$, it is an eigenvalue of $\mathcal{R}$ as there is a Bloch wave solution \high{corresponding} to $z_j^+$. As $(I-\B_z)^{-1}$ is a family of meromorphic operators with respect to $z$ and $z_j^+$ is a pole, it has the Laurent series expansion in a small enough neighbourhood of $z_j^+$:
 \begin{equation}\label{eq:exp_Bz}
  (I-\B_z)^{-1}=\sum_{\ell=-M}^\infty (z-z_j^+)^\ell \B_\ell,
 \end{equation}
where $M>0$ is a positive integer and $\B_\ell$ is bounded. The series converges in a punctured neighborhood $\mathring{B}(z_j^+,\delta_j)$, where $\delta_j>0$ is sufficiently small. From Lemma \ref{th:poly_R},
\begin{equation*}
\begin{aligned}
 (\mathcal{R}-z_j^+ I)^M\Big(\Upsilon\L_j g(z,x)\Big)&=\Upsilon\L_j\left((z-z_j^+)^M \left[\sum_{\ell=1}^N p_\ell(z)f_\ell(x)\right]\right)\\
 &=\sum_{\ell=1}^N\Upsilon\left(\frac{1}{2\pi\i}\oint_{|z-z_j^+|=\delta_j}(z-z_j^+)^M(I-\B_z)^{-1} p_\ell(z) \widetilde{f}_\ell\d z\right)\\
 &=\sum_{\ell=1}^N\Upsilon\left(\frac{1}{2\pi\i}\oint_{|z-z_j^+|=\delta_j}(z-z_j^+)^M\left[\sum_{s=-M}^\infty (z-z_j^+)^s \B_s\right]  p_\ell(z)  \widetilde{f}_\ell\d z\right)\\
 &=\sum_{\ell=1}^N\Upsilon\left(\frac{1}{2\pi\i}\oint_{|z-z_j^+|=\delta_j}\left[\sum_{s=0}^\infty (z-z_j^+)^{s} \left(\B_{s-M}  \widetilde{f}_\ell\right)\right]p_\ell(z) \d z\right).
 \end{aligned}
\end{equation*}
Thus for each $\ell=1,2,\dots,N$, the integrand $(I-\B_z)^{-1}(z-z_j^+)^M p_\ell(z)  \widetilde{f}_\ell$ is analytic at the point $z_j^+$. From Cauchy's integral formula, the integral equals to $0$. Thus
\begin{equation*}
 (\mathcal{R}-z_j^+ I)^M\Big(\Upsilon\L_j g(z,x)\Big)=0,
\end{equation*}
{which} implies that $\Upsilon\L_j g(z,x)$ is a generalized eigenfunction of $\mathcal{R}$ associated with the eigenvalue $z_j^+$.

Note that for any $g\in\mathcal{P}\left(\UD,L^2(\Omega_0)\right)$, $\Upsilon\L_j g(z,x)\in\Upsilon\E_j$. From Lemma \ref{th:poly_R}, {$\mathcal{R}\Big[\Upsilon\L_j g(z,x)\Big]\in\Upsilon\E_j$}. Thus $\Upsilon\E_j$ is an invariant subspace of the operator $\mathcal{R}$ in \high{$H^{1/2}(\Gamma)$}.  This implies that $\Upsilon\E_j\subset \mathcal{N}\left((\mathcal{R}-z_j^+I)^M\right)$\high{, which is the null space of $(\mathcal{R}-z_j^+I)^M$}.

 From Corollary \ref{th:U_co}, $\mathcal{V}=H^{1/2}(\Gamma_1)=X$. From the representation of $u$ in \eqref{eq:LAP} and Theorem \ref{th:generalized_residue_l2}, $\mathcal{V}=\left\{\sum_{j=1}^\infty\Upsilon\L_j f:\,f\in L^2(\Omega_0)\right\}$, thus
\begin{equation*}
X=H^\high{1/2}(\Gamma_1)=\overline{\rm span}\{\Upsilon\E_j:\,j\in\N\}.
\end{equation*}
 From the fact that the generalized eigenfunctions are linearly independent, the subspaces $\left\{\Upsilon\E_j\right\}_{j\in\N}$ are linearly independent and 
\begin{equation*}
X=\overline{\bigoplus_{j\in\N} \Upsilon\E_j}.
\end{equation*}

As $\mathcal{R}$ is a compact operator and $z_j^+$ is an eigenvalue, and $\nu^+(j)$ is the Riesz number for $\mathcal{R}-z_j^+I$, then
\begin{eqnarray*}
&& \emptyset=\mathcal{N}\left((\mathcal{R}-z_j^+I)^0\right)\subset\mathcal{N}\left((\mathcal{R}-z_j^+I)^1\right)\subset\cdots\subset\mathcal{N}\left((\mathcal{R}-z_j^+I)^{\nu^+(j)}\right)  =\mathcal{N}\left((\mathcal{R}-z_j^+I)^{\nu^+(j)+1}\right) ;\\
&& X=\left(\mathcal{R}-z_j^+I\right)^0(X)\supset\left(\mathcal{R}-z_j^+I\right)^1(X)\supset\cdots\supset\left(\mathcal{R}-z_j^+I\right)^{\nu^+(j)}(X) =\left(\mathcal{R}-z_j^+I\right)^{\nu^+(j)+1}(X).
\end{eqnarray*} 
Thus for any fixed $j$,
\begin{equation*}
\Upsilon\E_j\subset \mathcal{N}\left((\mathcal{R}-z_j^+I)^{M}\right)\subset \mathcal{N}\left((\mathcal{R}-z_j^+I)^{\nu^+(j)}\right).
\end{equation*}
Then we have the following decomposition:
\begin{equation*}
X=\overline{\bigoplus_{j\in\N}\mathcal{N}\left((\mathcal{R}-z_j^+I)^{\nu^+(j)}\right)}
\end{equation*}
and $\mathcal{R}$ has the Jordan normal form in the space $\bigoplus_{j\in\N}\mathcal{N}\left((\mathcal{R}-z_j^+I)^{\nu^+(j)}\right)$. The proof is finished.
\end{proof}

 

We have proved that the generalized eigenfunctions of $\mathcal{R}$ form a complete set in $X$. We can also conclude the same results for the translation operator defined on the left boundary of the unit cell. 
}

\subsection{Decomposition of LAP solutions}

With the help of the \high{spectral} decomposition of $\mathcal{R}$, we would like to discuss the decomposition of the LAP solution with the help of the spectrum decomposition. As
\begin{equation*}
 u_0(x_1+n,x_2)=\sum_{j=N+1}^\infty \L_j (z^{n-1}f),
\end{equation*}
the LAP solution is decomposed into infinite number of generalized eigenfunctions \high{corresponding} to $z_j^+$. 

Recall $S_+=S_+^0\cup RS$, $S_+^0=\{z_1^+,\dots,z_N^+\}$ and $RS=\{z_{N+1}^+,\dots\}$, let
\begin{equation*}
 u_j(x_1+n,x_2)= \L_j (z^{n-1}f).
\end{equation*}

As any $z_j^+$ is an isolated singularity, for any $j\in\N$, there is a  $\delta_j>0$ small enough such that  $B(z_j^+,\delta_j)\cap S_+=\left\{z_j^+\right\}$. Thus 
\begin{equation*}
 u_j(x_1+n,x_2)=\frac{1}{2\pi\i}\oint_{|z-z_j^+|=\delta_j}(I-\B_z)^{-1}z^{n-1} \widetilde{f}\d z.
\end{equation*}
 As $(I-\B_z)^{-1}$ exists and is uniformly bounded for $|z-z_j^+|=\delta_j$, there is a $C=C(\delta_j)>0$ such that
\begin{equation*}
 \left\|(I-\B_z)^{-1}f\right\|_{H^1(\Omega_0)}\leq C\|f\|_{L^2(\Omega_0)},\quad\high{\text{for $|z-z_j^+|=\delta_j$}}.
\end{equation*}
Then for any $n\geq 1$, from Lemma \ref{th:minkowski},
\high{
\begin{equation*}
\begin{aligned}
 \left\|u_j(x_1+n,x_2)\right\|_{L^2(\Omega_0)}&=\left\|\frac{1}{2\pi\i}\oint_{|z-z_j^+|=\delta_j}(I-\B_z)^{-1}z^{n-1} \widetilde{f}\d z\right\|_{L^2(\Omega_0)}\\
 &=\left\|\frac{\delta_j}{2\pi}\int_0^{2\pi}(I-\B_{z_j^++\delta_j e^{\i\theta}})^{-1}\left(z_j^++\delta_j e^{\i\theta}\right)^{n-1} \widetilde{f}e^{\i\theta}\d \theta\right\|_{L^2(\Omega_0)}\\
 &=\left(\int_{\Omega_0}\left|\frac{\delta_j}{2\pi}\int_0^{2\pi}(I-\B_{z_j^++\delta_j e^{\i\theta}})^{-1}\left(z_j^++\delta_j e^{\i\theta}\right)^{n-1} \widetilde{f}e^{\i\theta}\d \theta\right|^2\d x\right)^{1/2}\\
 &\leq \frac{\delta_j}{2\pi}\int_0^{2\pi}\left(\int_{\Omega_0}\left|(I-\B_{z_j^++\delta_j e^{\i\theta}})^{-1}\left(z_j^++\delta_j e^{\i\theta}\right)^{n-1} \widetilde{f}\right|^2\d x\right)^{1/2}\d z\\
 &\leq C\|f\|_{L^2(\Omega_0)}\left(\int_0^{2\pi}|z_j^++\delta_j e^{\i\theta}|^{n-1}\d z\right)\\
 &\leq C\|f\|_{L^2(\Omega_0)}\left(\delta_j+|z_j^+|\right)^{n-1}.
 \end{aligned}
\end{equation*}}
Thus we conclude the result in the following theorem.

\begin{theorem}
 For any $j\in\N$, given a small enough $\delta>0$, there is a constant $C=C(\delta)>0$ such that for any $n\geq 1$ and $j\geq N+1$,
 \begin{equation*}
  \|u_j\|_{H^1(\Omega_n)}\leq C\left(\delta+\left|z_j^+\right|\right)^{n-1}.
 \end{equation*}
 Thus for any $z_j^+\in RS$, $u_j$ decays exponentially when $x_1\rightarrow+\infty$.  
\end{theorem}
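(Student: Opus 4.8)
The plan is to promote the $L^2(\Omega_0)$-bound on $u_j(\,\cdot+n,\cdot\,)$ obtained in the display immediately preceding the theorem to an $H^1(\Omega_n)$-bound, and then read off the exponential decay from the fact that $|z_j^+|<1$ for $z_j^+\in RS$.

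\emph{Step 1 (the $H^1$ estimate).} Fix $j\ge N+1$ and choose $\delta_j>0$ small enough that the circle $\{|z-z_j^+|=\delta_j\}$ encloses only the pole $z_j^+$ of $(I-\B_z)^{-1}$; this is possible since $\mathbb{F}$ is discrete (Property b)) and $z_j^+$ is isolated. On that circle $(I-\B_z)^{-1}$ is analytic, hence --- as already recorded above --- there is $C=C(\delta_j)$ with $\|(I-\B_z)^{-1}g\|_{H^1(\Omega_0)}\le C\|g\|_{L^2(\Omega_0)}$ uniformly for $|z-z_j^+|=\delta_j$. Starting from
\[
u_j(x_1+n,x_2)=\frac{1}{2\pi\i}\oint_{|z-z_j^+|=\delta_j}(I-\B_z)^{-1}\bigl(z^{n-1}\widetilde{f}\bigr)\,\d z,
\]
parametrizing $z=z_j^++\delta_j e^{\i\theta}$, and running the Minkowski integral inequality (Lemma \ref{th:minkowski}) exactly as in the computation preceding the theorem but with the $H^1(\Omega_0)$-norm in place of the $L^2(\Omega_0)$-norm on the left, one gets
\[
\|u_j(\,\cdot+n,\cdot\,)\|_{H^1(\Omega_0)}\le C\,\delta_j\,\bigl(|z_j^+|+\delta_j\bigr)^{n-1}\|f\|_{L^2(\Omega_0)}.
\]
Since translation by $n$ in $x_1$ is an isometry of $H^1$, the left side equals $\|u_j\|_{H^1(\Omega_n)}$; setting $\delta:=\delta_j$ and absorbing $\delta_j\|f\|_{L^2(\Omega_0)}$ into the constant yields the claimed inequality. (Alternatively one could keep the $L^2$-bound as stated and invoke the interior elliptic estimate already used in Lemma \ref{th:bdint}, namely Theorem 3.10 of \cite{Cakon2006}, to control $\|u_j\|_{H^1(\Omega_n)}$ by the $L^2$-norm of $u_j$ on $\Omega_{n-1}\cup\Omega_n\cup\Omega_{n+1}$; this only shifts the exponent by a bounded amount and changes the constant. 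The contour route is cleaner because the $n$-dependence stays explicit in $z^{n-1}$.)

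\emph{Step 2 (exponential decay).} Since $z_j^+\in RS$ we have $|z_j^+|<1$, in fact $|z_j^+|\le e^{-\tau}$ by Lemma \ref{th:s_pm_dist}; hence, after first shrinking $\delta$, we may assume $\rho:=\delta+|z_j^+|<1$. Then $(\delta+|z_j^+|)^{n-1}=\rho^{\,n-1}=e^{-(n-1)\log(1/\rho)}$ decays geometrically in $n$, and since on $\Omega_n$ one has $x_1\in(n,n+1]$, i.e. $n\ge x_1-1$, this is exponential decay as $x_1\to+\infty$.

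The argument is essentially routine; the only point that needs care is that $C=C(\delta)$ must be independent of $n$. This holds precisely because, once $\delta$ (hence the contour radius) is fixed small enough to avoid every element of $\mathbb{F}$, the bound on $(I-\B_z)^{-1}$ over the contour is uniform, and the entire $n$-dependence is carried by the elementary factor $|z|^{n-1}\le(\delta+|z_j^+|)^{n-1}$ on that contour.
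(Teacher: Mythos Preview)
Your proposal is correct and follows essentially the same contour-integral-plus-Minkowski route as the paper. In fact you close a small gap: the paper's displayed computation preceding the theorem only writes the $L^2(\Omega_0)$-norm on the left before asserting the $H^1(\Omega_n)$-bound in the statement, whereas you explicitly carry the $H^1$-norm through the Minkowski step (which is legitimate precisely because the resolvent bound $\|(I-\B_z)^{-1}\widetilde f\|_{H^1(\Omega_0)}\le C\|f\|_{L^2(\Omega_0)}$ is already in $H^1$). One minor quibble: the parenthetical alternative misidentifies Theorem~3.10 of \cite{Cakon2006} as an interior elliptic estimate---in Lemma~\ref{th:bdint} it is used to bound a boundary integral, not to upgrade $L^2$ to $H^1$---but since your main argument does not rely on this, it does not affect correctness.
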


From the theorem above, the mode $u_j$ for $j\geq N+1$ is evanescent. From \eqref{eq:LAP}, the solution is composed of finite number of rightward propagating Bloch wave solutions, and an infinite number of generalized eigenfunctions that are evanescent.

\section*{Acknowlegdments}
The work  was supported by Deutsche Forschungsgemeinschaft (DFG) through CRC 1173.

\bibliographystyle{alpha}
\bibliography{ip-biblio} 

\end{document}